\DeclareMathOperator*{\argmin}{arg\,min}
\DeclareMathOperator{\diver}{\textbf{div}} %
\DeclareMathOperator{\diverl}{\textbf{\emph{div}}} 
\DeclareMathOperator{\rot}{\textbf{rot}}
\newcommand{\Rmnum}[1]{\expandafter\@slowromancap\romannumeral #1@}
\newtheorem{remark}{Remark}
\newtheorem{assumption}{Assumption}[section]
\title{Diffeomorphic Image Registration with An Optimal Control Relaxation and Its Implementation\thanks{Manuscript Version 1, \textbf{\color{blue}\sc \today}. \funding{This work was supported by the National Natural Science Foundation of China (NSFC Project number 11771369), also partly by grants from the outstanding young scholars of Education Bureau of Hunan Province, P. R. China (number 17B257) and Natural Science Foundation of Hunan Province, P. R. China (number 2018JJ2375, 2017SK2014, 2018XK2304).
}}}
\author{Jianping Zhang\thanks{School of Mathematics and Computational Science, Hunan National Center for Applied Mathematics, and Hunan Key Laboratory for Computation and Simulation in Science and Engineering, Xiangtan University, Xiangtan, Hunan 411105,
P. R. China (Corresponding author, \email{jpzhang@xtu.edu.cn}).}\and Yanyan Li\thanks{School of Mathematics and Computational Science, Xiangtan University, Xiangtan, Hunan 411105,
P. R. China (\email{liyanyan@smail.xtu.edu.cn}).}
}
\begin{document}
\maketitle
\begin{abstract}
Image registration has played an important role in image processing problems, especially in medical imaging applications. It is well known that when the deformation is large, many variational models cannot ensure diffeomorphism. In this paper, we propose a new registration model based on an optimal control relaxation constraint for large deformation images, which can theoretically guarantee that the registration mapping is diffeomorphic. We present an analysis of optimal control relaxation for indirectly seeking the diffeomorphic transformation of Jacobian determinant equation and its registration applications, including the construction of diffeomorphic transformation as a special space. We also provide an existence result for the control increment optimization problem in the proposed diffeomorphic image registration model with an optimal control relaxation. Furthermore, a fast iterative scheme based on the augmented Lagrangian multipliers method (ALMM) is analyzed to solve the control increment optimization problem, and a convergence analysis is followed. Finally, a grid unfolding indicator is given, and a robust solving algorithm for using the deformation correction and backtrack
strategy is proposed to guarantee that the solution is diffeomorphic. Numerical experiments show that the registration model we proposed can not only get a diffeomorphic mapping when the deformation is large, but also achieves the state-of-the-art performance in quantitative evaluations in comparing with other classical models.
\end{abstract}

\begin{keywords}
  Diffeomorphic image registration; Dynamical system; Jacobian equation; Optimal control relaxation; Augmented Lagrangian multiplier method; Grid unfolding indicator; Deformation correction.
\end{keywords}

\begin{AMS}
49K20, 57N25, 65F22, 65N50, 74G65, 94A08, 94A12, 92C50, 92C55
\end{AMS}

\pagestyle{myheadings}
\thispagestyle{plain}
\markboth{Optimal Control Relaxation of Diffeomorphic Image Registration and its Solution}
{Jianping Zhang and Yanyan Li}

\section{Introduction}\label{sec:Section1}
Image registration refers to the process of finding a spatial transformation between two or more images according to a certain registration measure, so that the corresponding points of two images can be spatially consistent. Image registration is one of the most basic problems in computer vision and also is a difficult problem in medical image analysis. It has been widely used in target tracking, geological exploration, disease detection, angiography and other fields \cite{CChen2019,HFLi2020,Liu2011Total,Modersitzki2004Numerical,JZhang2016}. Therefore, medical image registration has a vital role in promoting the research of clinical medicine \cite{Behan20062,hill2001medical,MansiiLogDemons,SotirasDeformable}.

Non-rigid registration is suitable for the non-rigid object deformation, and can achieve local deformation more accurately. The commonly used models of non-rigid registration include: total variation (TV) model \cite{10,Frohn}, diffusion model \cite{fischer2002fast}, curvature model \cite{fischer2003curvature}, elastic model \cite{broit1981optimal,Fischler}, viscous fluid model \cite{christensen1997volumetric} and optical flow field model \cite{Modersitzki2004Numerical} etc. 
 Fischler and Elschlager \cite{Fischler} proposed an embedding elastic metric to constrain relative movement. 
Bajcsy and Kovacic \cite{bajcsy1989multiresolution} used affine transformation to correct large global differences before elastic registration. 
 But the elastic model does not allow for large image deformations due to its local nature. Christensen et al. \cite{christensen1997volumetric,christensen1996deformable} proposed a viscous fluid regularization term to replace the elastic energy. Such model can deal with large deformation, but it can not generate diffeomorphism. In addition, high computational complexity seriously hampers its application. The Demons registration model was first proposed by Thirion \cite{thirion1998image}. It is a non-parametric non-rigid registration model that is improved based on the optical flow equation, which is more efficient than other non-rigid registration methods. As pointed out by Fischer and Modersitzki \cite{fischer2002fast}, implementation for diffusion registration is based on finite difference approximation of a diffusion-like equation. One of the most competitive features of this method is its fast speed, which makes such scheme very attractive for high-resolution applications. Curvature-based image registration is a fully automatic and non-rigid registration method proposed by Modersitzki \cite{fischer2003curvature}, which relies on a curvature based regularization term and provides a smooth solution. In contrast to many other nonlinear registration techniques, the curvature model does not require an additional affine linear pre-registration step. Zhang and Chen \cite{JPZhang2015} presented a novel variational framework of the total fractional-order variation to supervise nonlocal deformation in non-rigid image registration. These registration models can be effective for relatively small deformation, but not all models are effective for large deformation, and in particular few models can be guaranteed to generate diffeomorphic mapping.

 Vercauteren \cite{Vercauteren2007Non} proposed a non-parametric diffeomorphic demons image registration algorithm. Christensen et al. \cite{christensen1996deformable} developed a large deformation model for computing transformations, which overcomes the limitations of the small deformation model and ensures that the transformations are diffeomorphic. Later, large deformation diffeomorphic metric mapping (LDDMM) framework that can generate the diffeomorphic transformation of image registration was proposed \cite{beg2005computing,MangA,TrouvDiffeomorphisms}. Lam and Lui \cite{KCLam2014} introduced a novel model involving a Beltrami coefficient term to obtain diffeomorphic image or surface registrations via quasi-conformal maps, which can deal with large deformations. In \cite{YTLee2016}, a notion of conformality distortion of a diffeomorphism in the $n$-dimensional Euclidean space has also been formulated.

In the past few years, more and more scholars have paid attention to diffeomorphic image registration. The folding degree of deformed mesh in diffeomorphic image registration, measured by the local quantity ${\det (\nabla{\bm{\varphi}} (\bm{x}))}$, should be reduced or avoided, where ${\bm{\varphi}} (\bm{x})$ is the transformation in image registration, and ${\det (\nabla{\bm{\varphi}} (\bm{x}))}$ is the Jacobian determinant of ${\bm{\varphi}} (\bm{x})$, especially ${\det (\nabla{\bm{\varphi}} (\bm{x}))} >0$ restricts that the transformation ${\bm{\varphi}} (\bm{x})$ is an one-to-one mapping.

Haber and Modersitzki \cite{HaberNumerical} in 2004 proposed a volume-preserving registration model by limiting that ${\det (\nabla{\bm{\varphi}} (\bm{x}))}$ equals to $1$. Although such  incompressibility has important applications in some fields, it is not necessary or reasonable in others. Karacal et al. \cite{karacal2004} modified the deformation constraint to satisfy ${\det (\nabla{\bm{\varphi}} (\bm{x}))}>0$, which ensures the topology preservation of the deformation field ${\bm{\varphi}} (\bm{x})$. Yanovsky et al. \cite{Yanovsky2007Log} applied the symmetric Kullback-Leibler distance to quantify ${\det (\nabla{\bm{\varphi}} (\bm{x}))}$ for seeking a diffeomorphic mapping. Chen and Oktem \cite{CChen2018} developed a variational framework for indirect image registration, where a template is registered against a target that is known only through indirect noisy observations. Zhang and Chen \cite{Daoping2018} proposed a novel diffeomorphic model for image registration that establishes a link between the Beltrami coefficient of the transformation ${\bm{\varphi}} (\bm{x})$ and the quantity ${\det (\nabla{\bm{\varphi}} (\bm{x}))}$.

Liao \cite{liao2008optimal} in 2008 proposed the optimal control method for data alignment. The div-curl system was used as a constraint term to ensure the diffeomorphism of the deformation field. Hsiao \cite{hsiao2014new} proposed a new registration method based on the div-curl system, and obtained a Laplace equation about deformation field that satisfies the homogeneous Dirichlet boundary condition. A gradient descent method is given to optimize the transformation $\bm{\varphi}$.

In this paper, we aim to reformulate an optimal control relaxation dealing with the partial differential equation of Jacobian determinant ${\det(\nabla{\bm{\varphi}}(\bm{x}))}= f(\bm{\varphi}(\bm{x}))$
proposed by Dacorogna and Moser \cite{dacorogna1990partial}, to model diffeomorphic image registration problem. Our contributions are summarized as follows.
\begin{itemize} 
\item[-] We propose a novel optimal control relaxation technique solving the diffeomorphic image registration model (denoted as OCRDIR) for large deformation images, that establishes a link between the registration transformation and the Jacobian equation, which can theoretically guarantee that the registration mapping is smooth, invertible and also diffeomorphic.

\item[-] We present an analysis of optimal control relaxation for indirectly seeking the diffeomorphic transformation of Jacobian determinant equation. We also provide an existence result for the minimum of the control increment non-convex optimization in our OCRDIR model.

\item[-] We propose a fast ALMM iterative scheme to solve the control increment optimization problem. The main novelty is the applicability of a general framework of constraint
optimization tools, which we can apply to reformulate the minimization problem. We also analyze the convergence of the proposed scheme. 
\item[-] We give a grid unfolding indicator for cell central-difference discretization, then a robust ALMM-based OCRDIR algorithm for using the deformation correction and backtrack
strategy is proposed to guarantee that the solution is diffeomorphic. Numerical experiments show that the  proposed method can not only get a diffeomorphic mapping when the deformation is large, but also achieves the state-of-the-art performance in quantitative evaluations in comparing with other classical models.
\end{itemize}

The rest of the paper is organized as follows. In Section \ref{sec:Section2} we review some methods
of image registration modeling. In Section
\ref{sec:Section3}, we build some optimal control relaxation theories to indirectly seek the diffeomorphic transformation of Jacobian determinant equation. Our new ALMM-based OCRDIR method is proposed and the convergence analysis is followed in Section \ref{sec:Section4}. We show
experimental results in Section \ref{sec:Section5}. Finally we conclude this work in Section \ref{sec:conclusions}.

\section{Reviews}\label{sec:Section2} The task of finding a suitable transformation $\bm{\varphi}: \Omega\subset \mathbb{R}^d\rightarrow \mathbb{R}^d (d=2\text{ or }3)$ such that a transformed version $T\big(\bm{\varphi}(\bm{x})\big)$ of the \emph{template} image $T(\bm{x})$ is similar to the \emph{reference}
image $R(\bm{x})$, is to solve a general minimization problem
\begin{equation}\label{similarity_measure}
\min\limits_{\bm{\varphi}} \mathcal{D}[R,T;\bm{\varphi}], 
\end{equation}
where $\Omega$ is a bounded closed set and $\bm{u}(\bm{x})=\bm{\varphi}(\bm{x})-\bm{x}$ is the increment field (i.e., $T\big(\bm{\varphi}(\bm{x})\big)=T\big({\bm{x}} + {\bm{u}(\bm{x})}\big)$), $\mathcal{D}[R,T;\bm{\varphi}]$ is the similarity measure. It is well-known that the problem (\ref{similarity_measure}) is ill-posed \cite{Modersitzki2004Numerical}, therefore, the classical mathematical model of image registration can be given by
\begin{equation*}
\min\limits_{\bm{\varphi}}\left\{\mathcal{E}\big(\bm{\varphi}\big) := \mathcal{D}[R,T;\bm{\varphi}]+ \tau \mathcal{S}[\bm{\varphi}]\right\},
\end{equation*}
where $\mathcal{S}[\bm{\varphi}]$ is a regularization term, and $\tau$ is a regularization parameter for managing the trade-off between $\mathcal{D}[R,T;\bm{\varphi}]$ and $\mathcal{S}[\bm{\varphi}]$. Throughout the paper, we choose the so called sum of squared differences (SSD) \cite{Modersitzki2004Numerical} defined by
\begin{equation*}\label{eq:eqssd}
\begin{split}
\mathcal{D}_{_\text{SSD}}[R,T;{\bm{\varphi}}] =& \frac{1}{2}\int_\Omega  {{{(T({\bm{\varphi}}({\bm{x}})) - R({\bm{x}}))}^2}d{\bm{x}}}
= \frac{1}{2}\int_\Omega  {{{(T({\bm{x}} + {\bm{u}}({\bm{x}})) - R({\bm{x}}))}^2}d{\bm{x}}},
\end{split}
\end{equation*}
 as similarity measure between $T$ and $R$. We also refer readers to literatures \cite{hermosillo2002variational,roche,MooreQuality} for many other similarity measures in mono-modality or multi-modality image registration.

\subsection{Image registration modeling} 
 As remarked, various ways of modeling non-rigid
image registration have been proposed. Below we review a few of them.
\subsubsection*{Active demons} Demons algorithm is a well-known non-parametric image registration method proposed by Thirion \cite{thirion1998image}, it combines the optical flow image matching idea with the thermodynamic molecular diffusion theory proposed by Maxwell to image registration. For any point $\bm{x}\in\Omega$ in the \emph{reference} image $R$, the matching point $\bm{\varphi}({\bm{x}})=\bm{x}+\bm{u}(\bm{x})\in\Omega$ is found one-to-one in the \emph{template} image $T$ such that $\mathcal{D}[R,T;\bm{\varphi}({\bm{x}})]=\frac{1}{2}{\int_\Omega}\big[T\big(\bm{x}+\bm{u}(\bm{x})\big)-R(\bm{x})\big]^2d{\bm{x}}\approx 0$, the second-order Taylor expansion of $\mathcal{D}[R,T;\bm{\varphi}({\bm{x}})]$ at the origin $\bm{u}(\bm{x})=\bm{0}$ is obtained
\begin{equation*}
\begin{split}
\mathcal{D}[R,T;\bm{\varphi}({\bm{x}})]=&\cfrac{1}{2}\mathlarger{\int_\Omega}\big[T\big(\bm{x}+\bm{u}(\bm{x})\big)-R(\bm{x})\big]^2d{\bm{x}}\\
\approx &\cfrac{1}{2}\mathlarger{\int_\Omega}\Big[\big(T(\bm{x})-R(\bm{x})\big)^2+2\big(T(\bm{x})-R(\bm{x})\big)\nabla T(\bm{x})\cdot\bm{u}(\bm{x})\\
&+\big(\nabla T(\bm{x})\cdot\bm{u}(\bm{x})\big)^2+\big(T(\bm{x})-R(\bm{x})\big)\big({\bm{u}(\bm{x})}^T\nabla^2 T(\bm{x})\bm{u}(\bm{x})\big)\Big]d{\bm{x}}.
\end{split}
\end{equation*}
The second-order derivative term $\big(T(\bm{x})-R(\bm{x})\big)\big({\bm{u}(\bm{x})}^T\nabla^2 T(\bm{x})\bm{u}(\bm{x})\big)$ is omitted from the above formula, then using the first-order variation and \textbf{Sherman-Morrison} formula, one has the minimum point
\[\bm{u}(\bm{x}) = \cfrac{(T(\bm{x}) - R(\bm{x}))\nabla T(\bm{x})}{|\nabla T(\bm{x})|^2 }.\]
Since the \emph{template} image $T(\bm{x})$ in the flat region will make $|\nabla T(\bm{x})|$ small or equal to zero, resulting in the instability of the algorithm, Thirion adds a constraint term $(T(\bm{x})- R(\bm{x}))^2$ in the denominator to ensure the stability of the increment field, that is
\begin{equation}\label{eq:eq36}
\begin{array}{*{20}{c}}
\bm{u}(\bm{x}) = \cfrac{(T(\bm{x}) - R(\bm{x}))\nabla T(\bm{x})}{|\nabla T(\bm{x})|^2 + \alpha(T(\bm{x}) - R(\bm{x}))^2},
\end{array}
\end{equation}
where ${\nabla T(\bm{x})}$ is the force of the internal boundary, and $T(\bm{x}) - R(\bm{x})$ is the force of the external images.

The traditional demons only uses the gradient information of \emph{template} image $T(\bm{x})$  to drive image deformation, when $\nabla T(\bm{x})$ is insufficient, it may cause the undesired registration errors. In addition, when the object deformation is large, the demons algorithm may not complete the registration requirements, even the convergence speed will be very slow. To deal with these difficulties, Wang et al. used the ideas of Newton force and reaction force to propose the active demons algorithm \cite{wang2005validation}, which adds the gradient information of the \emph{reference} image to obtain following formula
\begin{equation}\label{eq:eq37}
\begin{array}{*{20}{c}}
{\bm{u}(\bm{x})} = \cfrac{{(T - R)\nabla R}}{{{{(\nabla R)}^2} + {\tau ^2}{{(T - R)}^2}}} + \cfrac{{(T - R)\nabla T}}{(\nabla T)^2 + {\tau ^2}{{(T - R)}^2}},
\end{array}
\end{equation}
where the normalization factor $\tau$ is proposed by Cachier et al. \cite{cachier1999fast} to adjust the force strength.
\subsubsection*{LDDMM framework}
The framework of large deformation diffeomorphic metric mapping (LDDMM) can generate the diffeomorphic transformation for image registration \cite{beg2005computing,TrouvDiffeomorphisms,SotirasDeformable}. The variational minimization of LDDMM framework can be defined as follow \cite{MangA}:
\begin{equation}\label{lddmm_minimization}
\left\{\begin{array}{l}
\min\limits_{\bm{v},\mathcal{U}( \cdot ,1)} \left\{ \mathcal{J}(\bm{v},\mathcal{U}( \cdot ,1)): = \mathcal D[\mathcal{U}( \cdot ,1),R] + \tau \mathcal{S}[\bm{v}]\right\},  \\
\text{ s.t. }\mathcal{C}(\bm{v}( \bm{x} ,t),\mathcal{U}( \bm{x} ,t)) = 0,\;\text{ on } \Omega \times [0,1].
\end{array} \right.
\end{equation}
where $\mathcal D$ is a similarity measure, $\mathcal{U}(\cdot,\cdot): \Omega\times [0,1]\to \mathbb{R}$ is a time series of images, $\bm{v}(\cdot,\cdot): \Omega\times [0,1]\to \mathbb{R}^d$ is the velocity field. The regularization of the velocity field $\bm{v}(\bm{x} ,t)$ can be defined by diffusion regularizer or curvature regularizer as follows
\begin{equation*}
{\mathcal{S}^\emph{diff}}[\bm{v}]=\cfrac{1}{2}{\mathlarger\int_0^1} {\mathlarger\int_\Omega }\sum\limits_{l = 1}^d   {{{|\nabla {v_l}|}^2}}d{\bm{x}}dt,
\quad
{\mathcal{S}^\emph{curv}}[\bm{v}]=\cfrac{1}{2}{\mathlarger\int_0^1} {\mathlarger\int_\Omega }\sum\limits_{l = 1}^d   {{{|\Delta {v_l}|}^2}}d{\bm{x}}dt.
\end{equation*}
 The constraints $\mathcal C(\bm{v}(\bm{x} ,t),\mathcal{U}( \bm{x} ,t))=0$ is given by the transport equation
\begin{equation*}
\mathcal C(\bm{v}(\bm{x} ,t),\mathcal{U}( \bm{x} ,t))=0\quad \Longleftrightarrow\quad \left\{
\begin{array}{l}
{\partial _t}\mathcal{U}( \bm{x},t) + \bm{v}(\bm{x},t) \cdot \nabla \mathcal{U}( \bm{x} ,t) = 0 ,   \\
\mathcal{U}( \bm{x} ,0) = T(\bm{x}).        
\end{array} \right.
\end{equation*}
The above intensity $\mathcal{U}(\cdot,\cdot)$ is constant along the characteristic curve
$\bm{\varphi}(\cdot,t)$ ($t\in [0,1]$), i.e., $\mathcal{U}(\bm{\varphi}(\bm{x},t),t)= T(\bm{x})$ for all $\bm{x}\in \Omega$. Hence the deformation function $\bm{\varphi}(\bm{x},t)$ satisfies                              
\begin{equation}\label{eq:curve_evolving}
\partial_{t} \bm{\varphi}(\bm{x},t)=\bm{v}(\bm{x}, t):=\bm{\bar{v}}\left(\bm{\varphi}(\bm{x},t), t\right) \quad \text { and } \quad \bm{\varphi}(\bm{x},0)=\bm{\varphi}_0(\bm{x})=\bm{x}.
\end{equation}
The characteristic curve equation (\ref{eq:curve_evolving}) can be computed by the fourth-order Runge-Kutta (RK4) scheme of ODE \eqref{eq:curve_evolving} as follow
\begin{equation}\label{eq_RK4}
\bm{\varphi}_{n+1}=\textbf{RK4}(\bm{\varphi}_n,\bm{\bar {v}},\Delta t)\Leftrightarrow\left\{\begin{array}{l}
\bm{\varphi}_{n+1}=\bm{\varphi}_{n}+\frac{\Delta t}{6}\left(k_{1}+2 k_{2}+2 k_{3}+k_{4}\right), \\
k_{1}=\bm{\bar {v}}\left(\bm{\varphi}_n, t_n\right), \\
k_{2}=\bm{\bar {v}}\left(\bm{\varphi}_{n}+\frac{\Delta t}{2}k_{1}, t_{n}+\frac{\Delta t}{2} \right), \\
k_{3}=\bm{\bar {v}}\left(\bm{\varphi}_{n}+\frac{\Delta t}{2}k_{2}, t_{n}+\frac{\Delta t}{2} \right), \\
k_{4}=\bm{\bar {v}}\left(\bm{\varphi}_{n}+\Delta tk_{3}, t_{n}+\Delta t \right),
\end{array}\right.
\end{equation}
and the variational minimization (\ref{lddmm_minimization}) is iteratively solved to obtain the final state ${\bm\varphi}_{1}(\bm{x}):={\bm\varphi}(\bm{x},1)$, and the other interpolation methods are also provided in an image registration toolbox FAIR \cite{Fair} to obtain the deformed image $\mathcal{U}(\bm{\varphi}(\bm{x},1),1)= T({\bm\varphi}_{1})$ \cite{MangA}.

\subsubsection*{H. Y. Hsiao registration}
A new non-rigid registration algorithm was proposed by H. Y. Hsiao et al. \cite{hsiao2014new} which is based on the deformation method of numerical grid generation for computational fluid dynamics. The registration transformation is generated by iteratively minimizing
\begin{equation}\label{eq:eq44}
\begin{array}{*{20}{c}}
\min\limits_{{\bm{\varphi}}}\mathcal{D}_{_\text{SSD}}[R,T;{\bm{\varphi}}]= \cfrac{1}{2}\mathlarger{\int_\Omega}  {\Big(T\big(\bm{\varphi} ({\bm{x}})\big)}  - R({\bm{x}})\Big)^2d{\bm{x}},
\end{array}
\end{equation}
an iterative scheme of transformation $\bm{\varphi}(\bm{x})$ is updated by
\begin{equation}\label{eq:eq45}
\begin{array}{*{20}{c}}
\bm{\varphi}(\bm{x}) = {\bm{\varphi} _{\text{old}}}(\bm{x}) +  \frac{{{\bm{u}(\bm{x})}}}{{1 + \diver\;  {\bm{u}(\bm{x})}}}\Delta t ,
\end{array}
\end{equation}
the displacement field $\bm{u}(\bm{x})$ satisfies partial differential
equation or div-curl systems
\begin{equation}\label{eq:eq46}
\left\{ {\begin{array}{rll}
  \diver {\bm{u}(\bm{x})}  &= f(\bm{x}) - 1,& \text{ in }\Omega  \\
  \rot{\bm{u}(\bm{x})}  &= g ,&\text{ in }\Omega \\
  ~~~{\bm{u}(\bm{x})} &={\bm{0}},&\text{ on }\partial\Omega.
  \end{array}} \right.
\quad \Longrightarrow\quad 
  \left\{ {\begin{array}{rll}
\Delta \bm{u}&=\mathbf{F},&\text{ in }\Omega\\
{\bm{u}(\bm{x})} &={\bm{0}},&\text{ on }\partial\Omega.
\end{array}} \right.
\end{equation}

The another main technical achievement of this algorithm is the derivation of the gradient of $ \mathcal{D}_{_\text{SSD}}$ with respect to $\mathbf{F}=(f_{x1}-g_{x2}, f_{x2}-g_{x1})$. The derivation of $\partial \mathcal{D}_{_\text{SSD}} / \partial \mathbf{F}$ is used  as gradient to update $\mathbf{F}$ by  $\mathbf{F}=\mathbf{F}_{\text {old }}+(\partial \mathcal{D}_{_\text{SSD}} / \partial \mathbf{F}) \Delta s$, where $\Delta s$ is the step size (also \emph{refer to} \cite{hsiao2014new} for more details).

\section{Diffeomorphic transformation}\label{sec:Section3}
The main use of Jacobian differential equation, which is the determinant of the Jacobian matrix of a vectorial function $\bm{\varphi} (\bm{x})\in \mathbb{R}^2$, is found in the transformation of coordinates. While the diffeomorphism is an important topic in differential geometry,  which is developed to deal with the transformation between two manifolds, in general, two different coordinate systems. 

\subsection{Analysis} Dacorogna and Moser in 1990 proved the existence and regularity of diffeomorphisms based on Jacobian equation \cite{dacorogna1990partial}. To proceed, we briefly review as follows:
\begin{theorem}[Diffeomorphic mapping \cite{dacorogna1990partial}]\label{thm01} 
Let $\ell\geq 0$ be an integer, $0<\epsilon<1$, $d\geq 2$, and a bounded closed set $\Omega\subset \mathbb{R}^d$ have a $\mathcal{C}^{\ell+3,\;\epsilon}$ boundary $\partial \Omega$ ($\mathcal{C}^{\ell,\;\epsilon}$ denotes the usual H{\"o}lder spaces). $g,\; g_{_0} \in \mathcal{C}^{\ell,\; \epsilon}(\Omega)$ with $g$, $g_{_0}>0 $ in $\Omega_{\text{in}}:=\Omega/ \partial\Omega $. Then there exists at least one diffeomorphism $\bm{\varphi}$ with
$\bm{\varphi}$, $\bm{\varphi}^{-1} \in \mathcal{C}^{\ell+1,\; \epsilon}\left(\Omega;\;\mathbb{R}^d\right)$ and satisfying
\begin{equation}\label{eq_diffeo}
\left\{\begin{array}{rll}
g(\bm{\varphi} (\bm{x}))\det \big(\nabla \bm{\varphi} (\bm{x})\big) &=\lambda g_{_0}(\bm{x}),\qquad &\bm{x} \in \Omega_{\text{in}} \\
\bm{\varphi} (\bm{x}) &= \bm{x},  \qquad   &\bm{x} \in \partial \Omega,
\end{array} \right.
\end{equation}
where $\lambda=\cfrac{\int_{\Omega} g(\bm{\varphi} (\bm{x})) d\bm{x}}{\int_{\Omega}g_{_0}(\bm{x}) d\bm{x}}$.
\end{theorem}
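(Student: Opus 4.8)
The plan is to reduce the weighted Jacobian equation \eqref{eq_diffeo} to the canonical unweighted problem $\det(\nabla\bm\varphi)=f$ with $f>0$, $\int_\Omega f\,d\bm{x}=|\Omega|$ and $\bm\varphi=\bm{x}$ on $\partial\Omega$, and then to solve that problem by the linearization-and-fixed-point scheme of Dacorogna and Moser \cite{dacorogna1990partial}. First I would record the compatibility condition: applying the change of variables $\bm{y}=\bm\varphi(\bm{x})$ to $\int_\Omega g(\bm\varphi)\det(\nabla\bm\varphi)\,d\bm{x}=\int_\Omega g\,d\bm{y}$ (using $\bm\varphi(\Omega)=\Omega$, forced by $\bm\varphi=\mathrm{id}$ on $\partial\Omega$) and matching it against $\int_\Omega\lambda g_0\,d\bm{x}$ forces exactly $\lambda=\int_\Omega g\,/\int_\Omega g_0$, so the normalization in the statement is the necessary and only admissible one.

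For the reduction I would solve two auxiliary unweighted problems: a map $\bm\alpha$ with $\det(\nabla\bm\alpha)=c_1 g$, $c_1=|\Omega|/\int_\Omega g$, and a map $\bm\beta$ with $\det(\nabla\bm\beta)=c_0 g_0$, $c_0=|\Omega|/\int_\Omega g_0$, both equal to the identity on $\partial\Omega$ (each is solvable since $\int_\Omega c_1 g=\int_\Omega c_0 g_0=|\Omega|$). Setting $\bm\varphi=\bm\alpha^{-1}\circ\bm\beta$ and using the chain rule together with $\det(\nabla\bm\alpha)(\bm\alpha^{-1}(\bm{y}))=c_1 g(\bm\alpha^{-1}(\bm{y}))$ gives $g(\bm\varphi)\det(\nabla\bm\varphi)=(c_0/c_1)g_0=\lambda g_0$, and $\bm\varphi$ inherits the boundary identity and the $\mathcal C^{\ell+1,\epsilon}$ regularity (with $\mathcal C^{\ell+1,\epsilon}$ inverse) as a composition of such maps. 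Everything therefore rests on the canonical problem.

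For the canonical problem I would write $\bm\varphi=\mathrm{id}+\bm v$ and expand $\det(I+\nabla\bm v)=1+\diver\bm v+Q(\nabla\bm v)$, where $Q$ collects the terms of degree $\ge 2$ in the entries of $\nabla\bm v$; the equation becomes the quasilinear problem $\diver\bm v=(f-1)-Q(\nabla\bm v)$ with $\bm v=\bm 0$ on $\partial\Omega$. The analytic engine is the linear divergence equation: for $h\in\mathcal C^{\ell,\epsilon}(\Omega)$ with $\int_\Omega h\,d\bm{x}=0$ there is a solution operator $\bm L$ producing $\bm w=\bm L h\in\mathcal C^{\ell+1,\epsilon}$ with $\diver\bm w=h$, $\bm w=\bm 0$ on $\partial\Omega$, and $\|\bm w\|_{\ell+1,\epsilon}\le C\|h\|_{\ell,\epsilon}$; this is obtained from the Neumann problem $\Delta p=h$ via Schauder estimates, followed by a boundary correction that upgrades the natural condition $\partial_n p=0$ to the full homogeneous Dirichlet datum. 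One then recasts the canonical problem as the fixed-point equation $\bm v=\bm L\big[(f-1)-Q(\nabla\bm v)\big]$; the zero-mean compatibility needed by $\bm L$ is preserved because $\int_\Omega\det(\nabla\bm\varphi)\,d\bm{x}=|\Omega|$ whenever $\bm v=\bm 0$ on $\partial\Omega$, forcing $\int_\Omega Q(\nabla\bm v)\,d\bm{x}=0$. Since $Q$ is quadratic to leading order, the map is a contraction on a small ball of $\mathcal C^{\ell+1,\epsilon}_0$ when $\|f-1\|_{\ell,\epsilon}$ is small, and its fixed point is the desired $\bm v$.

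The hard part is twofold. The sharp regularity — gaining exactly one derivative, so that $f\in\mathcal C^{\ell,\epsilon}$ yields $\bm\varphi\in\mathcal C^{\ell+1,\epsilon}$ — is precisely what the elementary Moser flow method fails to deliver, since that method produces a velocity field $\bm w/f_t$ that is only $\mathcal C^{\ell,\epsilon}$ (capped by the weight) and hence a flow of the same, lower, class; obtaining optimal regularity is exactly the content of the estimate for $\bm L$ and the contraction in $\mathcal C^{\ell+1,\epsilon}$. The second difficulty is removing the smallness hypothesis on $f$: for arbitrary positive $f$ I would argue by continuation, factoring the prescribed Jacobian along the path $f_t=1+t(f-1)$ (which stays positive and has integral $|\Omega|$) into finitely many near-identity steps whose solutions are composed, or equivalently using the flow method to absorb the bulk of the volume change and the fixed point to correct the near-identity remainder. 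Finally I would confirm that the solution is a genuine diffeomorphism: $\det(\nabla\bm\varphi)=f>0$ makes $\bm\varphi$ a local diffeomorphism, and since $\bm\varphi=\mathrm{id}$ on $\partial\Omega$ a topological degree argument shows $\bm\varphi:\Omega\to\Omega$ is a bijection, with $\bm\varphi^{-1}\in\mathcal C^{\ell+1,\epsilon}$ by the inverse function theorem.
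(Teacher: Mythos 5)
Your proposal is correct, but it cannot be compared against a proof in the paper because the paper gives none: Theorem \ref{thm01} is quoted with a citation to Dacorogna--Moser and used as a black box. What you have written is essentially a faithful reconstruction of the original Dacorogna--Moser argument: the reduction $\bm{\varphi}=\bm{\alpha}^{-1}\circ\bm{\beta}$ to the canonical problem $\det(\nabla\bm{\varphi})=f$, the solution operator for $\diver\bm{w}=h$ with the Schauder gain of one derivative, the fixed-point equation $\bm{v}=\bm{L}[(f-1)-Q(\nabla\bm{v})]$ with the null-Lagrangian identity guaranteeing the zero-mean compatibility, and the continuation/decomposition step to remove the smallness of $\|f-1\|_{\ell,\epsilon}$ are all the right ingredients, and your closing degree-theory argument for global invertibility is standard. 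The only construction the paper itself supplies (Section 3.2, Lemma \ref{lem_control_increment}) is the Moser flow method, $\frac{d}{dt}\bm{\phi}=\bm{u}/h$ with $\diver\bm{u}=-\partial_t h$, which produces a solution of the time-dependent Jacobian equation but, as you correctly point out, by itself only yields $\mathcal{C}^{\ell,\epsilon}$ rather than the optimal $\mathcal{C}^{\ell+1,\epsilon}$ regularity claimed in the theorem; your observation that the sharp regularity requires the elliptic fixed-point route is precisely the point of the Dacorogna--Moser paper and is worth retaining. One small remark on the constant: your compatibility computation correctly yields $\lambda=\int_\Omega g(\bm{y})\,d\bm{y}\big/\int_\Omega g_{_0}(\bm{x})\,d\bm{x}$; the formula displayed in the statement, $\int_\Omega g(\bm{\varphi}(\bm{x}))\,d\bm{x}$ in the numerator, omits the Jacobian weight and should be read as the integral of $g$ over the image, i.e.\ $\int_\Omega g(\bm{\varphi}(\bm{x}))\det(\nabla\bm{\varphi}(\bm{x}))\,d\bm{x}=\int_\Omega g(\bm{y})\,d\bm{y}$, which is what your derivation actually establishes.
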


Note that the above Jacobian system \eqref{eq_diffeo} may have several smooth solutions \cite{dacorogna1990partial}. For more general measures, for instance for those with a mass density, the above problem is extended to the special setting of two mass measures $g_0(\cdot)$ and $g(\cdot)$ on two manifolds $\mathcal{X}$ and $\mathcal{Y}$ as finding two smooth one-to-one maps $\bm{\varphi}:\mathcal{X}\rightarrow \mathcal{Y}$, $\bm{\varphi}^{-1}:\mathcal{Y}\rightarrow \mathcal{X}$  such that
\[\int_{\mathcal{Z}} g_0(\bm{x})d\bm{x}=\int_{\mathcal{Z}} g(\bm{\varphi}(\bm{x}))\det \big(\nabla \bm{\varphi} (\bm{x})\big) d\bm{x},\;\;\text{for any}\; \mathcal{Z}\subset {\mathcal{X}}.\]
 Further, the constraint \eqref{eq_diffeo} can be used to perform image registration 
because it does operate as an image warping method with a diffeomorphic transformation $\bm{\varphi}(\bm{x})$. Intuitively, a diffeomorphic mapping $\bm{\varphi}:\mathcal{X}\rightarrow \mathcal{Y}$, can be interpreted as a function moving a single point $\bm{x}\in\mathcal{X}$ from a coordinate system to another. It is also important to realize that the mass $g_0(\bm{x})$ defined on $\mathcal{X}$ may be a known prior but the mass $g(\bm{\varphi}(\bm{x}))$ defined on $\mathcal{Y}$ should be unknown.

\begin{remark}[Variants of Jacobian system]\label{rmk1} In the above theorem, the system \eqref{eq_diffeo} seeking a diffeomorphic mapping $\bm{\varphi}$ can be rewritten as the following variants:   
\begin{description}
\item[Case 1.] assume that $g(\bm{\varphi} (\bm{x}))=1$ for all $\bm{x}\in\Omega_{\text{in}}$, $ \int_\Omega  {g_0(\bm{x})\;d\bm{x} = |\Omega |} $, and $\lambda=1$. Thus the Jacobian equation (\ref{eq_diffeo}) can be written as:
\begin{equation}\label{eq:eqb11}
\left\{\begin{array}{rll}
\det \big(\nabla \bm{\varphi} (\bm{x})\big) &= g_0(\bm{x}),\qquad &\bm{x} \in \Omega_{\text{in}} \\
\bm{\varphi} (\bm{x}) &= \bm{x},  \qquad   &\bm{x} \in \partial \Omega .
\end{array} \right.
\end{equation}
\item[Case 2.] assume that $\lambda=1$, $g_{_0}(\bm{x})=1$ in (\ref{eq_diffeo}) for all $\bm{x}\in\Omega$, and $g\big(\bm{\varphi}(\bm{x})\big)>0$, and $ \int_\Omega  g(\bm{\varphi}(\bm{x}))d\bm{x} = |\Omega |$. Thus there exists at least one transformation ${\bm{\varphi}(\bm{x})}$: $\Omega\rightarrow\Omega$ (one-to-one and onto mapping) such that
\begin{equation*}
\left\{ {\begin{array}{rll}
{g\big(\bm{\varphi}(\bm{x})\big)\det \big(\nabla \bm{\varphi} (\bm{x})\big)}& = 1,\qquad &\bm{x} \in \Omega_{\text{in}} \\
{\bm{\varphi} (\bm{x}) }&= \bm{x}, \qquad    &\bm{x} \in \partial \Omega.
\end{array}} \right.
\end{equation*}

\item[Case 3.] assume further that $f\big(\bm{\varphi}(\bm{x})\big):=1/g\big(\bm{\varphi}(\bm{x})\big)$, the above system can be rewritten as 
\begin{equation}\label{eq:eqjb11c}
\left\{ {\begin{array}{rll}
{\det \big(\nabla \bm{\varphi} (\bm{x})\big)}& = f\big(\bm{\varphi}(\bm{x})\big),\qquad &\bm{x} \in \Omega_{\text{in}} \\
{\bm{\varphi} (\bm{x}) }&= \bm{x}, \qquad    &\bm{x} \in \partial \Omega.
\end{array}} \right.
\end{equation}
\end{description}
\end{remark}

 Since such systems are not easy to solve directly, construction of diffeomorphic mapping ${\bm{\varphi}(\bm{x})}: \Omega\rightarrow\Omega$ of the Jacobian equation \eqref{eq:eqb11} is a very meaningful work for many image problems. Dacorogna and Moser designed a constructing method to indirectly obtain a diffeomorphic mapping ${\bm{\varphi}(\bm{x})}$ of \eqref{eq:eqb11} (\emph{see} \cite{dacorogna1990partial} for more details).

To proceed, there is a need to define a suitable nonempty diffeomorphic transformation set $\textbf{Diff}_{\bm{\varphi}}^{f(\bm{\varphi})}(\Omega)$ by the solution of the differential system (\ref{eq:eqjb11c}) as
\begin{equation}\label{eq:eqjb11d}
\textbf{Diff}_{\bm{\varphi}}^{f(\bm{\varphi})}(\Omega):=\left\{\bm{\varphi}(\bm{x}) \in \mathrm{C}^{\ell+1,\; \epsilon}(\Omega;\;\mathbb{R}^2)\left|\left\{ {\begin{array}{rll}
{\det \big(\nabla \bm{\varphi} (\bm{x})\big)}& = f\big(\bm{\varphi}(\bm{x})\big),\; &\bm{x} \in \Omega_{\text{in}} \\
{\bm{\varphi} (\bm{x}) }&= \bm{x}, \;  &\bm{x} \in \partial \Omega.
\end{array}} \right.\right.\right\}.
\end{equation}

Without loss of generality, we restrict our attention to $\ell=1$ in the following part.

\subsection{Construction}\label{sect3_construct} Before seeking a diffeomorphic transformation $\bm{\varphi} (\bm{x})$ in (\ref{eq:eqjb11c}), we first analyze a constructing method of the \emph{time-dependent} diffeomorphic solution $\bm{\phi} (\bm{x},t)$ (corresponding to
the coordinate system at time $t$) that satisfies the \emph{time-sequence} Jacobian equation defined by
\begin{equation}\label{eq:zj2c}
\left\{ {\begin{array}{rll}
g\big(\bm{\phi} (\bm{x},t)\big)\det \nabla \bm{\phi} (\bm{x},t)&= g\big(\bm{\phi} (\bm{x},0)\big)\det \nabla \bm{\phi} (\bm{x},0)=g_0(\bm{x}), &\bm{x} \in \Omega_{\text{in}} \\
 \bm{\phi} (\bm{x},t) &= \bm{x},  &\bm{x} \in \partial \Omega\\
 \bm{\phi} (\bm{x},0)&=\bm{x},&\bm{x} \in \Omega
\end{array}} \right.
\end{equation}
for any $t\in [0,1]$, where $\cfrac{\int_{\Omega} g(\bm{\phi} (\bm{x},t)) d\bm{x}}{\int_{\Omega}g_0(\bm{x}) d\bm{x}}=1$. Especially if taking $g\big(\bm{\phi} (\bm{x},t)\big)=1/f\big(\bm{\phi} (\bm{x},t)\big)>0$, one easily has
\begin{equation}\label{eq:zj2d}
\left\{ {\begin{array}{rll}
\det \nabla \bm{\phi} (\bm{x},t)&= \frac{f(\bm{\phi} (\bm{x},t))}{f(\bm{x})}, &\bm{x} \in \Omega_{\text{in}} \\
 \bm{\phi} (\bm{x},t) &= \bm{x},  &\bm{x} \in \partial \Omega\\
  \bm{\phi} (\bm{x},0)&=\bm{x},&\bm{x} \in \Omega.
\end{array}} \right.
\end{equation}

The following technical Lemma is useful for understanding the construction method solving \emph{time-dependent} diffeomorphism $\bm{\phi} (\bm{x},t)$ in (\ref{eq:zj2d}):
\begin{lemma}[Abel-Jacobi-Liouville Identity \cite{Liu66}] \label{lemma1}
Let $\varPhi(t)$ be a $n \times n $ matrix function where each element is differentiable on $t\in [0,1]$. If $\frac{d\varPhi(t)}{dt} = A(t)\varPhi(t)$ where $A(t)$ is a $n\times n $ matrix, then one has $\frac{d}{dt}\det \varPhi(t) = \textbf{Tr}A(t) \det\varPhi(t)$, where $\textbf{Tr}A(t)$ is the trace of $A(t)$.
\end{lemma}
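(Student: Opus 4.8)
The plan is to prove the identity by differentiating $\det\varPhi(t)$ column-by-column, substituting the ODE $\varPhi'=A\varPhi$, and then collapsing the resulting sum to $\textbf{Tr}A$ by means of the cofactor (Laplace) expansion. A pleasant feature of this route is that it requires no assumption on the invertibility of $\varPhi(t)$, so the single computation is valid for every $t\in[0,1]$.

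First I would write $\varPhi(t)=[\bm{q}_1(t)\,|\,\cdots\,|\,\bm{q}_n(t)]$ in terms of its columns $\bm{q}_j(t)\in\mathbb{R}^n$. Since the determinant is a multilinear function of its columns and each column is differentiable, the product rule gives
\[
\frac{d}{dt}\det\varPhi(t)=\sum_{j=1}^{n}\det\bigl[\bm{q}_1,\ldots,\bm{q}_j',\ldots,\bm{q}_n\bigr],
\]
where in the $j$-th term only the $j$-th column has been replaced by its derivative. The hypothesis $\varPhi'=A\varPhi$ reads column-wise as $\bm{q}_j'=A\bm{q}_j$, so each summand is the determinant of $\varPhi$ with column $j$ replaced by $A\bm{q}_j$.

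Next I would expand each of these determinants along the modified $j$-th column. Writing $C_{ij}$ for the $(i,j)$ cofactor of $\varPhi$ — which does not involve any entry of column $j$ and is therefore unchanged by the replacement — and using $(A\bm{q}_j)_i=\sum_k A_{ik}\varPhi_{kj}$, the Laplace expansion yields
\[
\det\bigl[\bm{q}_1,\ldots,A\bm{q}_j,\ldots,\bm{q}_n\bigr]=\sum_{i}(A\bm{q}_j)_i\,C_{ij}=\sum_{i,k}A_{ik}\,\varPhi_{kj}\,C_{ij}.
\]
Summing over $j$ and invoking the cofactor orthogonality relation $\sum_j \varPhi_{kj}\,C_{ij}=\delta_{ki}\det\varPhi$ (the expansion of $\det\varPhi$ along row $i$, which vanishes whenever the ``wrong'' row $k\neq i$ is inserted) collapses the double sum to $\sum_i A_{ii}\det\varPhi=\textbf{Tr}A\,\det\varPhi$, which is precisely the claim.

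The hard part is essentially bookkeeping: justifying the column-wise product rule for the determinant and correctly invoking the cofactor orthogonality identity. A conceptually shorter alternative restricts attention to times where $\varPhi(t)$ is invertible: expanding $A\bm{q}_j=\sum_i c_{ij}\bm{q}_i$ in the basis of columns gives $A\varPhi=\varPhi C$, hence $C=\varPhi^{-1}A\varPhi$, and by multilinearity only the diagonal coefficient $c_{jj}$ survives in each determinant, so that $\frac{d}{dt}\det\varPhi=\bigl(\sum_j c_{jj}\bigr)\det\varPhi=\textbf{Tr}(\varPhi^{-1}A\varPhi)\det\varPhi=\textbf{Tr}A\,\det\varPhi$ by cyclicity of the trace. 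This version is slicker but needs a separate continuity argument to cover singular $\varPhi(t)$ (harmless in the intended application, where $\varPhi(0)=\nabla\bm{\phi}(\bm{x},0)=I$ is invertible). For this reason I would favour the cofactor-expansion computation, which holds verbatim for all $t$ irrespective of invertibility.
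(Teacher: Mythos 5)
The paper does not prove this lemma at all: it is quoted as a known result with a citation to \cite{Liu66} and used as a black box in the proof of Lemma \ref{lem_control_increment}, so there is no in-paper argument to compare against. Your cofactor-expansion proof is correct and complete: the column-wise product rule for $\det$ follows from multilinearity, the substitution $\bm{q}_j'=A\bm{q}_j$ is exactly the hypothesis read column-by-column, the Laplace expansion along the modified column is legitimate because the cofactors $C_{ij}$ do not involve column $j$, and the orthogonality relation $\sum_j \varPhi_{kj}C_{ij}=\delta_{ki}\det\varPhi$ collapses the double sum to $\textbf{Tr}A\,\det\varPhi$ as claimed. Your decision to favour this route over the $\varPhi^{-1}A\varPhi$ shortcut is also sound, since the lemma as stated imposes no invertibility on $\varPhi(t)$; in the paper's application $\varPhi=\nabla\bm{\phi}(\bm{x},t)$ does start at the identity, but the cofactor argument makes the identity unconditional.
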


\subsubsection{Dynamical system} Inspired by the knowledge in \cite{dacorogna1990partial} and the work in \cite{generation1992},
the previous literature (\emph{see} Theorem \ref{thm01}) shows that there exists a diffeomorphic transformation $\bm{\phi} (\bm{x},t)\in \mathcal{C}^{\ell+1,\; \epsilon}(\Omega, [0,1])$ of system (\ref{eq:zj2d}) with $f(\bm{x})=1$, for which we can find $\bm{\varphi} (\bm{x})=\bm{\phi} (\bm{x},1)$ satisfying (\ref{eq:eqjb11c}). To construct such diffeomorphic transformation $\bm{\phi} (\bm{x},t)$, we consider the following continuous evolution system
\begin{eqnarray}\label{eq:eqzj14a}
 \begin{array}{rl}
    \cfrac{d}{dt}\bm{\phi} (\bm{x},t)&=\bm{v}\big(\bm{\phi} (\bm{x},t), t\big).  \\
  \end{array}
  \end{eqnarray}
The state $\bm{\phi} (\bm{x},t)$ of the above dynamical system evolves through continuous time $t\in[0,1]$, such \emph{time sequence} 
$\bm{\phi} (\bm{x},t)$ can be seen as a coordinate system that moves through the state space. The evolution rule will specify how this coordinate system $\bm{\phi} (\bm{x},t)$ moves by designing its velocity field $\bm{v}\big(\bm{\phi} (\bm{x},t), t\big)$. In this case, starting with an initial state (coordinate system) 
\(\bm{\phi} (\bm{x},0)=\bm{x}\)
 at time $t=0$, the trajectory $\bm{\phi} (\bm{x},t)$ of all future times will be a new state through state space.

We also notice that the velocity field $\bm{v}\big(\bm{\phi} (\bm{x},t), t\big)$ from interactions between different variants (components) of $\bm{\phi} (\bm{x},t)$ cannot be
explained in terms of the individual properties of these variants alone. Although $\bm{v}\big(\bm{\phi} (\bm{x},t), t\big)$ cannot be measured in an experiment directly, it can in principle be obtained from the prior knowledge of control increment $\bm{u}\big(\bm{\phi} (\bm{x},t)\big)$ of the state $\bm{\phi} (\bm{x},t)$ in the process of creating a dynamical system.

To illustrate the idea of dynamical system solving (\ref{eq:eqjb11c}) or (\ref{eq:zj2d}), the velocity field can be analytically designed as
\begin{equation}\label{eq:eqzj14b}
\bm{v}\big(\bm{\phi} (\bm{x},t), t\big):= \cfrac{\bm{u}(\bm{\phi}(\bm{x},t))}{h(\bm{\phi} (\bm{x},t),t)},\end{equation}
where a homotopy composite function $h(\bm{\phi} (\bm{x},t),t)$ will be constructed in Section \ref{Sect_choice}. 

\subsubsection{Control increment system} One of main ingredients for constructing the diffeomorphic solution $\bm{\varphi} (\bm{x})=\bm{\phi} (\bm{x},1)$ of system (\ref{eq:eqjb11c}) is to determine control increment $\bm{u}(\bm{\phi}(\bm{x},t))$ by manually choosing the composite function $h(\bm{\phi} (\bm{x},t),t)$. 
Now let us look at how we may apply (\ref{eq:eqzj14a}) and (\ref{eq:eqzj14b}) to exploit $\bm{u}(\bm{\phi}(\bm{x},t))$. To express this discussion in mathematical terms, we define
\begin{equation}\label{eq:homotopy_function}
H(\bm{\phi} (\bm{x},t),t) =\det \nabla \bm{\phi} (\bm{x},t)\;h(\bm{\phi} (\bm{x},t),t).
\end{equation}

\begin{lemma}\label{lem_control_increment}
Let $h(\bm{\phi} (\bm{x},t),t)>0$ be a homotopy composite function with $h(\bm{\phi} (\bm{x},0),0)=h(\bm{x},0)$. If the control increment $\bm{u}(\bm{\phi} (\bm{x},t))$ in (\ref{eq:eqzj14b}) is a solution of the following PDE system
\begin{equation}\label{eq:zj2c_div}
\left\{ {\begin{array}{rll}
\diverl\Big(\bm{u}(\bm{\phi} (\bm{x},t))\Big)+\cfrac{\partial h(\bm{\phi} (\bm{x},t),t)}{\partial t}&=0,\qquad &\bm{x} \in \Omega_{\text{in}}, \\
 \bm{u}(\bm{\phi} (\bm{x},t)) &= \bm{0},  \qquad   &\bm{x} \in \partial \Omega,
\end{array}} \right.
\end{equation}
then the solution $\bm{\phi} (\bm{x},t)$ of the equation (\ref{eq:eqzj14a}) with $\bm{v}\big(\bm{\phi} (\bm{x},t), t\big):= \cfrac{\bm{u}(\bm{\phi}(\bm{x},t))}{h(\bm{\phi} (\bm{x},t),t)}$ satisfies
\begin{equation}\label{eq:zj2b}
\det \nabla \bm{\phi} (\bm{x},t)=\frac{h(\bm{x},0)}{h(\bm{\phi} (\bm{x},t),t)},\; \text{ for all } t\in (0,1]. 
\end{equation}
\end{lemma}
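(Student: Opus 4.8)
The plan is to show that the scalar quantity $H(\bm{\phi}(\bm{x},t),t)=\det\nabla\bm{\phi}(\bm{x},t)\,h(\bm{\phi}(\bm{x},t),t)$ introduced in \eqref{eq:homotopy_function} is conserved along each trajectory $t\mapsto\bm{\phi}(\bm{x},t)$, i.e.\ $\frac{d}{dt}H(\bm{\phi}(\bm{x},t),t)=0$ for fixed $\bm{x}$. Once this is established, evaluating $H$ at $t=0$, where $\bm{\phi}(\bm{x},0)=\bm{x}$ and $\nabla\bm{\phi}(\bm{x},0)=\bm{I}$ give $H(\bm{\phi}(\bm{x},0),0)=h(\bm{x},0)$, immediately yields $\det\nabla\bm{\phi}(\bm{x},t)\,h(\bm{\phi}(\bm{x},t),t)=h(\bm{x},0)$ for all $t$, which is exactly the identity \eqref{eq:zj2b}.

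First I would differentiate the Jacobian matrix $\varPhi(t):=\nabla_{\bm{x}}\bm{\phi}(\bm{x},t)$ in $t$. Commuting $\frac{d}{dt}$ with $\nabla_{\bm{x}}$ in the flow equation \eqref{eq:eqzj14a} and applying the chain rule produces $\frac{d}{dt}\varPhi(t)=A(t)\varPhi(t)$, where $A(t)=(\nabla\bm{v})(\bm{\phi}(\bm{x},t),t)$ is the spatial Jacobian of the velocity field evaluated along the trajectory. Lemma \ref{lemma1} (Abel-Jacobi-Liouville) then gives $\frac{d}{dt}\det\nabla\bm{\phi}=\textbf{Tr}\,A(t)\,\det\nabla\bm{\phi}=(\diver\bm{v})(\bm{\phi})\,\det\nabla\bm{\phi}$, since the trace of the spatial Jacobian is the divergence of $\bm{v}$ in its current-position argument.

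Next I would compute the total (\emph{material}) derivative of $h(\bm{\phi}(\bm{x},t),t)$ by the chain rule, $\frac{d}{dt}h(\bm{\phi},t)=\frac{\partial h}{\partial t}+\nabla h\cdot\bm{v}$, and combine it with the previous step through the product rule to obtain $\frac{d}{dt}H=\det\nabla\bm{\phi}\,\big[(\diver\bm{v})\,h+\frac{\partial h}{\partial t}+\nabla h\cdot\bm{v}\big]$. Substituting the designed field $\bm{v}=\bm{u}/h$ from \eqref{eq:eqzj14b} and expanding $\diver(\bm{u}/h)=\frac{1}{h}\diver\bm{u}-\frac{\bm{u}\cdot\nabla h}{h^{2}}$ makes the two advection terms cancel exactly, $(\diver\bm{v})\,h+\nabla h\cdot\bm{v}=\diver\bm{u}$. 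Hence $\frac{d}{dt}H=\det\nabla\bm{\phi}\,\big[\diver\bm{u}+\frac{\partial h}{\partial t}\big]$, and the PDE hypothesis \eqref{eq:zj2c_div} forces the bracket to vanish in $\Omega_{\text{in}}$, so $\frac{d}{dt}H\equiv0$ and the claim follows.

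The main obstacle is the careful bookkeeping of the two distinct differentiations of $h$: the partial time derivative $\frac{\partial h}{\partial t}$ (at fixed spatial argument) appearing in the hypothesis must be kept separate from the advective contribution $\nabla h\cdot\bm{v}$ arising from the motion of $\bm{\phi}$, and one must consistently interpret $\diver$ and $\frac{\partial}{\partial t}$ as acting on the current-position argument of $\bm{u}$ and $h$ evaluated at $\bm{\phi}(\bm{x},t)$. The clean cancellation of the $\frac{\bm{u}\cdot\nabla h}{h}$ terms is precisely what singles out the velocity choice $\bm{v}=\bm{u}/h$ in \eqref{eq:eqzj14b}, and the most likely place for an error to creep in is confusing a total derivative with a partial one. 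The regularity assumptions (that $\bm{\phi}(\cdot,t)$ is $\mathrm{C}^{\ell+1,\epsilon}$ and that $h>0$) ensure every expression above is well defined and that interchanging $\frac{d}{dt}$ with $\nabla_{\bm{x}}$ is legitimate.
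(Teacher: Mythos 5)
Your proposal is correct and follows essentially the same route as the paper's proof: it conserves the same quantity $H(\bm{\phi}(\bm{x},t),t)=\det\nabla\bm{\phi}\;h(\bm{\phi},t)$ via Lemma \ref{lemma1}, with the identical cancellation $(\diver\bm{v})h+\nabla h\cdot\bm{v}=\diver\bm{u}$ (the paper states this as $\diver(\bm{v})h=\diver(\bm{u})-\bm{v}\cdot\nabla h$, which is the same identity), and then evaluates at $t=0$. No gaps.
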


\begin{proof} Let us consider the gradient form of the equation (\ref{eq:eqzj14a}) as
\begin{equation}\label{gradient1a}
\nabla \left(\frac{d}{{dt}}\bm{\phi} (\bm{x},t)\right) =\frac{d}{{dt}}\nabla \bm{\phi} (\bm{x},t) = \nabla \bm{v}\big(\bm{\phi} (\bm{x},t),t\big)\nabla \bm{\phi} (\bm{x},t),
\end{equation}
 which can be written by using Lemma \ref{lemma1} as
\begin{equation*}
\begin{split}
\frac{d}{{dt}}\det \nabla \bm{\phi} (\bm{x},t) = \textbf{Tr}\Big(\nabla \bm{v}\big(\bm{\phi} (\bm{x},t),t\big)\Big)\det \nabla \bm{\phi} (\bm{x},t)
=\diver \Big(\bm{v}\big(\bm{\phi} (\bm{x},t),t\big)\Big)\det \nabla \bm{\phi} (\bm{x},t).
\end{split}
\end{equation*}
Further from (\ref{eq:eqzj14b}), we easily obtain
\begin{equation}\label{eq:eqzj14c}
 \bm{u}\big(\bm{\phi} (\bm{x},t)) = \bm{v}\big(\bm{\phi} (\bm{x},t),t\big)h(\bm{\phi} (\bm{x},t),t)
\end{equation}
and
\begin{equation*}
\diver\Big(\bm{v}\big(\bm{\phi} (\bm{x},t),t\big)\Big) h(\bm{\phi} (\bm{x},t),t) = \diver\Big(\bm{u}(\bm{\phi} (\bm{x},t))\Big)
- \bm{v}\big(\bm{\phi} (\bm{x},t),t\big)\cdot\nabla h(\bm{\phi} (\bm{x},t),t).
\end{equation*}
Therefore, by rearranging this expression, we obtain the following derivative formula for estimating the properties of the function $H(\bm{\phi} (\bm{x},t),t)$:
\begin{equation*}
\begin{split}
\frac{d }{d t}H(\bm{\phi} (\bm{x},t),t)=&h(\bm{\phi} (\bm{x},t),t)\frac{d }{{d t}}\det \nabla \bm{\phi} (\bm{x},t)\\
&\qquad\qquad+ \det \nabla \bm{\phi} (\bm{x},t)\left(\cfrac{\partial h(\bm{\phi} (\bm{x},t),t)}{\partial t}+\nabla h(\bm{\phi} (\bm{x},t),t)\cdot\frac{d}{{dt}}\bm{\phi} (\bm{x},t)\right)\\
=& \det \nabla \bm{\phi} (\bm{x},t)\Bigg(\diver\Big(\bm{v}\big(\bm{\phi} (\bm{x},t),t\big)\Big)h(\bm{\phi} (\bm{x},t),t)\\
&\qquad\qquad+\cfrac{\partial h(\bm{\phi} (\bm{x},t),t)}{\partial t}+\nabla h(\bm{\phi} (\bm{x},t),t)\cdot\frac{d}{{dt}}\bm{\phi} (\bm{x},t)\Bigg)\\
=& \det \nabla \bm{\phi} (\bm{x},t)\left(\diver\Big(\bm{u}(\bm{\phi} (\bm{x},t))\Big)+\cfrac{\partial h(\bm{\phi} (\bm{x},t),t)}{\partial t}\right).
\end{split}
\end{equation*}
If letting 
\begin{equation*}
\diver\Big(\bm{u}(\bm{\phi} (\bm{x},t))\Big)+\cfrac{\partial h(\bm{\phi} (\bm{x},t),t)}{\partial t}=0, 
\end{equation*}
it is easy to check that $\frac{d }{{d t}}H(\bm{\phi} (\bm{x},t),t) = 0$ for all $t\in (0,1]$, hence we obtain
\begin{equation*}
\begin{split}
 h(\bm{x},0)=\det \nabla \bm{\phi} (\bm{x},0) h(\bm{x},0) = H(\bm{x},0)=H(\bm{\phi} (\bm{x},t),t) = \det \nabla \bm{\phi} (\bm{x},t)h(\bm{\phi} (\bm{x},t),t),
 \end{split}
\end{equation*}
i.e.,
\begin{equation*}
\det \nabla \bm{\phi} (\bm{x},t)=\frac{h(\bm{x},0)}{h(\bm{\phi} (\bm{x},t),t)},\; \text{ for all } t\in (0,1]. 
\end{equation*}

According to the above construction, if we consider the boundary condition $\bm{u}(\bm{\phi} (\bm{x},t))= \bm{0}$ for all $\bm{x} \in \partial \Omega$, thus the control increment $\bm{u}(\bm{\phi} (\bm{x},t))$ of the state $\bm{\phi} (\bm{x},t)$ in (\ref{eq:eqzj14b}) is defined as a non-uniqueness solution of the following PDE system:
\begin{equation*}
\left\{ {\begin{array}{rll}
\diver\Big(\bm{u}(\bm{\phi} (\bm{x},t))\Big)+\cfrac{\partial h(\bm{\phi} (\bm{x},t),t)}{\partial t}&=0,\qquad &\bm{x} \in \Omega_{\text{in}}, \\
 \bm{u}(\bm{\phi} (\bm{x},t)) &= \bm{0},  \qquad   &\bm{x} \in \partial \Omega,
\end{array}} \right.
\end{equation*}
which proves the assertion.
\end{proof}

We remark that similar construction of the control increment $\bm{u}(\bm{\phi}(\bm{x},t))$ can be found in \cite{dacorogna1990partial,generation1992}.

\subsubsection{Boundary condition} Next we analyze that the solution $\bm{\phi}(\bm{x},1)$ in (\ref{eq:eqzj14a}) satisfies the boundary condition of (\ref{eq:eqjb11c}), i.e., $\bm{\phi}(\bm{x},1)=\bm{x}$ when ${\bm{x} \in \partial \Omega} $. Since $\bm{u}(\bm{\phi} (\bm{x},t))=\bm{0}$ for all ${\bm{x} \in \partial \Omega} $ and $t\in [0,1]$, (\ref{eq:eqzj14a}) can be simplified as:
  \begin{equation*}
  {\frac{d}{{dt}}\bm{\phi} (\bm{x},t)}={\bm{v}\big(\bm{\phi}(\bm{x},t),t\big)}=\bm{0},\quad \bm{x} \in \partial \Omega. 
  \end{equation*}
Hence $\bm{\phi}(\bm{x},t)$ is independent of $t$ on the boundary~$\partial \Omega$, it means that $\bm{\phi}(\bm{x},1)=\bm{\phi}(\bm{x},t)=\bm{\phi}(\bm{x},0)=\bm{x}$ for any ${\bm{x} \in \partial \Omega} $. Furthermore, combining with (\ref{eq:zj2b}), one has 
\begin{equation*}
\left\{ {\begin{array}{rll}
\det \nabla \bm{\phi} (\bm{x},t)&= \frac{h(\bm{x},0)}{h(\bm{\phi} (\bm{x},t),t)},\qquad &\bm{x} \in \Omega_{\text{in}}\\
 \bm{\phi} (\bm{x},t) &= \bm{x},  \qquad   &\bm{x} \in \partial \Omega,
\end{array}} \right.
\end{equation*}
for any $t\in (0,1]$. 

\begin{remark}
Especially if one takes $t=1$, then the solution $\bm{\varphi}(\bm{x})=\bm{\phi}(\bm{x},1)$ obtained by the above construction method satisfies the differential system
\begin{equation}\label{eq:zj2a}
\left\{ {\begin{array}{rll}
\det \big(\nabla \bm{\varphi}(\bm{x})\big)& =\frac{h(\bm{x},0)}{h(\bm{\varphi} (\bm{x}),1)}  
,\qquad &\bm{x}\in \Omega_{\text{in}}, \\
 \bm{\varphi} (\bm{x}) &=\bm{x},  \qquad   &\bm{x} \in \partial \Omega .
\end{array}} \right.
\end{equation}
\end{remark}

\subsubsection{Choice of path following composite function}\label{Sect_choice} Path following method, also called imbedding and homotopy continuation method, was first applied to the solution of separation models involving large numbers of nonlinear equations \cite{SALGOVIC1981}, which provides an useful approach to find the zeros of nonlinear function $f: \mathbb{R}^n\rightarrow \mathbb{R}^n$ in a globally convergent way. 

As has been discussed, $g_{_0}(\bm{x})$ and $g(\bm{\varphi}(\bm{x}))$ are two mass measures on two manifolds $\mathcal{X}$ and $\mathcal{Y}$, especially in $\bm{x}$- and $\bm{\varphi}(\bm{x})$-coordinate systems.
The purpose of considering the path following in this work is to build a smooth path between the beginning solution $\bm{\phi}(\bm{x},0)=\bm{x}$ of \eqref{eq:zj2d} at time $t=0$ and the ending target solution $\bm{\phi}(\bm{x},1)=\bm{\varphi}(\bm{x})$ of \eqref{eq:zj2d} at time $t=1$ (\emph{see} blue curve in Fig.\ref{fig_pathfollowing}). This means that it gradually deforms this initial transformation $\bm{x}$ (coordinate system at time $t=0$) into one helpful deformation field $\bm{\phi}(\bm{x},t)$ of (\ref{eq:zj2d}) as time $t\in [0,1]$, and eventually ends with a perfect registration $T(\bm{\varphi}(\bm{x}))\approx R(\bm{x})$ between the \emph{template} $T(\bm{x})$ and the \emph{reference} $R(\bm{x})$. 

To obtain system \eqref{eq:zj2b}, the design of $H(\bm{\phi} (\bm{x},t),t)=\det \nabla \bm{\phi} (\bm{x},t)\;h(\bm{\phi} (\bm{x},t),t)$ is one of the most important steps, where the function $H(\bm{\phi} (\bm{x},t),t)$ satisfies $\frac{d }{{d t}}H(\bm{\phi} (\bm{x},t),t) = 0$ for all $t\in (0,1]$, i.e.,
\[g_{_0}(\bm{x}):=h(\bm{x},0)= H(\bm{x},0)=H(\bm{\phi} (\bm{x},t),t) = \det \nabla \bm{\phi} (\bm{x},t)h(\bm{\phi} (\bm{x},t),t)\]
if \eqref{eq:zj2c_div} holds (\emph{see} Fig.\ref{fig_pathfollowing}(a)).

 As has been seen from (\ref{eq:eqzj14a}) and (\ref{eq:zj2c_div}), the function $h(\bm{\phi}(\bm{x},t),t)$ selectively determines the control increment $ \bm{u}(\bm{\phi}(\bm{x},t))$ and the velocity field $\bm{v}\big(\bm{\phi} (\bm{x},t), t\big)$ of the evolving deformation $\bm{\phi} (\bm{x},t)$. Next we follow the
discussion in \cite{EAllgower1993Continuation} to construct a composite function 
$h(\bm{\phi}(\bm{x},t),t)$ that allows to use different homotopy variants to serve the objective of continuously deforming an origin mass function $g_{_0}(\bm{x})$ at $t=0$ into a target mass function $g(\bm{\varphi}(\bm{x}))$ at $t=1$ (\emph{see} Fig. \ref{fig_pathfollowing}(b)). Here the set of admissible path following composite functions is defined by
\begin{equation}
\begin{split}
\mathcal{\bm{H}}_{g,\bm{\phi}}(\Omega,[0,1]):=\Big\{h(\bm{\phi}(\bm{x},t),t)\in\mathcal{C}^\ell(\Omega, &[0,1])\Big|\;\bm{\phi}(\bm{x},t)\in\mathcal{C}^{\ell+1,\; \epsilon}(\Omega, [0,1]),\\ 
h(\bm{\phi}(\bm{x},t),t)>0\;\text{ and }&\; \frac{\int_\Omega\;h(\bm{\phi}(\bm{x},t),t)\;d\bm{x}}{\int_\Omega\;g_{_0}(\bm{x})\;d\bm{x}}=1,\; \forall\; t\in[0,1],\\h(\bm{\phi}(\bm{x},0),0)=&g_{_0}(\bm{x})\; \text{ and }\; h(\bm{\phi}(\bm{x},1),1)=g(\bm{\varphi}(\bm{x})) \text{ on } \Omega\Big\},
\end{split}
\end{equation}
then we list several composite functions that allows to be used in Section \ref{sect3_construct} as follows
\begin{description}
  \item[P1:] $h(\bm{\phi}(\bm{x},t),t)=(1-t)g_{_0}(\bm{x}) + t\;g(\bm{\phi}(\bm{x},t)),\; \text{ for all } t\in [0,1]$;
  \item[P2:] $h(\bm{\phi}(\bm{x},t),t)=g_{_0}(\bm{x})e^{t\ln \frac{g(\bm{\phi}(\bm{x},t))}{g_{_0}(\bm{x})}},\; \text{ for all } t\in [0,1]$;
  \item[P3:] $h(\bm{\phi}(\bm{x},t),t)=(1-t)\det \big(\nabla \bm{\phi}(\bm{x},t)\big)+ t\;g(\bm{\phi}(\bm{x},t)),\; \text{ for all } t\in [0,1]$;
    \item[P4:] $h(\bm{\phi}(\bm{x},t),t)=g(\bm{\phi}(\bm{x},t)),\; \text{ for all } t\in [0,1]$.
\end{description}

\begin{figure}
\begin{center}
    \includegraphics[width=0.95\textwidth]{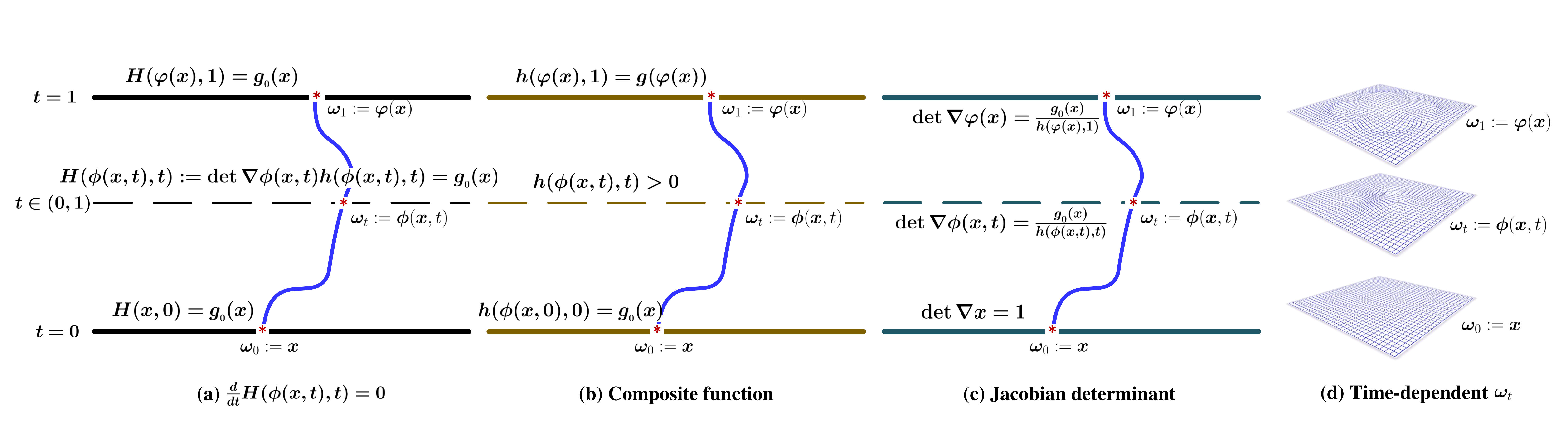}     
\end{center}
\caption{Time-sequence transformation $\bm{\omega}_t$, composition function and time-dependent Jacobian determinant.
}\label{fig_pathfollowing}
\end{figure}

For discrete setting, the mapping $\bm{\phi}(\bm{x},t)$ consists simply in moving the positions $\bm{x}_i=(x_1^i,x_2^i)$ of all the grid points to new positions $\bm{\phi}(\bm{x}_i,t)$, for instance for those with a grid quality measure $g(\bm{\phi}(\bm{x},t))$, the notion of $\bm{\phi}(\bm{x},t)$ plays a fundamental to describe spatial modifications for image registration.
The mass measure $g(\bm{\phi}(\bm{x},t))$ guarantees that $\bm{\phi}(\bm{x},t)$ is diffeomorphic, unfortunately it is unknown and depends on the moving image $T(\bm{\phi}(\bm{x},t))$ and the \emph{reference} $R(\bm{x})$, also depends on $\bm{\phi}(\bm{x},t)$ that be partly driven by SSD for image registration. Hence we consider the dirac delta approximation defined by
\begin{equation}\label{eq_g_func_approx}
\begin{split}
g(\bm{\phi}(\bm{x},t))=\int_{\mathbb{R}^2}g(\bm{y})\delta(\bm{\phi}(\bm{x},t)-\bm{y})d\bm{y}
=&\lim\limits_{\sigma\rightarrow 0}\frac{1}{2\pi\sigma^2}\int_{\mathbb{R}^2}g(\bm{y})\exp\left(-\frac{\|\bm{y}-\bm{\phi}(\bm{x},t)\|^2}{2\sigma^2}\right)d\bm{y}\\
\approx&\frac{1}{2\pi\sigma_\epsilon^2}\int_\Omega g(\bm{y})\exp\left(-\frac{\|\bm{y}-\bm{\phi}(\bm{x},t)\|^2}{2\sigma_\epsilon^2}\right)d\bm{y},
\end{split}
\end{equation}
where $g(\bm{y}):=g_{_0}(\bm{x})=1$ is an uniform mass density of the point $\bm{y}$ in origin $\bm{y}$-coordinate system for our registration problem at time $t=0$.

 In our work, an appropriate composite function $h(\bm{\phi}(\bm{x},t),t)$ will be chose to exploit the control increment $ \bm{u}(\bm{\phi}(\bm{x},t))$ of the diffusion equation (\ref{eq:zj2c_div}), and such $ \bm{u}(\bm{\phi}(\bm{x},t))$ is also required to minimize registration energy. Hence $\bm{\phi} (\bm{x},t)$ satisfies the diffeomorphic system (\ref{eq:zj2b}) 
for any time $t\in [0,1]$.

\subsection{Implementation} We can deduce the implementation, following reasoning similar to that in \cite{XChen2016Numerical}. Then the overall constructing process based on a dynamical system may be summarized as follows.
\begin{description}
  \item[Step 1.] Designing manually a homotopy composite function $h(\bm{\phi} (\bm{x},t),t)\in \mathcal{H}_{g,\bm{\phi}}(\Omega,[0,1])$ such that
\[h(\bm{\phi} (\bm{x},0),0)=g_{_0}(\bm{x}) \text{ and } h(\bm{\phi} (\bm{x},1),1)=g(\bm{\phi}(\bm{x},1))=g(\bm{\varphi}(\bm{x})) \text{ for all } \bm{x}\in\Omega.\] 

  \item[Step 2.] Solving $ \bm{u}(\bm{\phi}(\bm{x},t)) $: $\Omega\rightarrow \mathbb{R}^2$, such that
  \begin{equation}\label{eq:eqzj12}
  \left\{ {\begin{array}{rll}
    \diver \bm{u}\big(\bm{\phi} (\bm{x},t)\big)&=-\cfrac{\partial h(\bm{\phi} (\bm{x},t),t)}{\partial t}, \quad &\bm{x} \in \Omega_{\text{in}} \\
    {\bm{u}(\bm{\phi} (\bm{x},t))} &=\bm{0}, \quad  &\bm{x} \in \partial \Omega.
    \end{array}} \right.
  \end{equation}
  where $\bm{\phi} (\bm{x},t)\in\Omega$ for all $t\in [0,1]$.
  
  \item[Step 3.] Solving $\bm{\phi} (\bm{x},t)$ from the dynamical system
  \begin{eqnarray}\label{eq:eqzj14}
  \left\{ \begin{array}{rll}
  \cfrac{d}{dt}\bm{\phi} (\bm{x},t)&=\bm{v}\big(\bm{\phi} (\bm{x},t), t\big):= \cfrac{\bm{u}(\bm{\phi}(\bm{x},t))}{h(\bm{\phi} (\bm{x},t),t)},\quad &\bm{x} \in \Omega \\
  \bm{\phi} (\bm{x},0) &=\bm{x},\quad &\bm{x} \in \Omega.  \\
  \end{array} \right.
  \end{eqnarray}
\end{description}
A solution $\bm{\phi}(\bm{x},1)$ of (\ref{eq:eqzj14}) satisfies the differential system (\ref{eq:eqjb11c}), therefore we can solve Step 2 and Step 3 alternately to obtain the solution of the differential system (\ref{eq:eqjb11c}).

The dynamical system (\ref{eq:eqzj14}) is an ODE system that can be integrated
numerically using a high-order difference scheme. In image registration, the transformation $\bm{\phi} (\bm{x},t_{k+1})$ at time step $t_{k+1}$ is updated from the current transformation $\bm{\phi} (\bm{x},t_{k})$ by
solving equation (\ref{eq:eqzj14}) with the \textbf{RK4} difference scheme \eqref{eq_RK4}, namely
\begin{equation*}
\begin{array}{rll}
\bm{\phi} (\bm{x},t_{k+1})=\textbf{RK4}(\bm{\phi} (\bm{x},t_{k}),\bm{v},\Delta t).
\end{array}
\end{equation*}
We also refer readers to literatures \cite{Landmark,MangA,Modersitzki2004Numerical,Fair} for more details. Therefore, the solution $\bm{\varphi} (\bm{x})=\bm{\phi} (\bm{x},1)$ obtained by the above construction method satisfies the differential system
\begin{equation*}\label{eq:eqzj15}
\left\{ {\begin{array}{rll}
\det \big(\nabla \bm{\varphi} (\bm{x})\big)& = \frac{h(\bm{x},0)}{h(\bm{\varphi} (\bm{x}),1)},\qquad &\bm{x} \in \Omega_{\text{in}} \\
 \bm{\varphi} (\bm{x}) &= \bm{x},  \qquad   &\bm{x} \in \partial \Omega .
\end{array}} \right.
\end{equation*}

\subsection{Control increment in diffeomorphic image registration}
Although there are many methods for solving the above control increment PDE system (\ref{eq:eqzj12}) with non-unique solutions, it is impossible to find useful formulas for the close-form solutions of most. To look for numerical approximations $\bm{u}(\bm{\phi} (\bm{x},t))$ for diffeomorphic image registration, it is useful to add an optimization that guarantees suitable increment $\bm{u}(\bm{\phi} (\bm{x},t))$ of the diffeomorphic solution $\bm{\phi} (\bm{x},t)$. Therefore, the \emph{time sequential} minimization problem
\begin{equation}\label{eq_min_sub}
\min\limits_{\bm{u}(\bm{\phi} (\cdot,t))}\left\{\mathcal{E}^t_0(\bm{u}):=\mathcal{D}[R,T;{\bm{\bm{\phi}} (\cdot,t)+\alpha(t)\bm{u}\big(\bm{\phi} (\cdot,t)\big)}]+ \tau \mathcal{S}[{\bm{u}\big(\bm{\phi} (\cdot,t)\big)}]\right\}
\end{equation}
is imposed as a new restriction for $\bm{u}(\bm{\phi} (\bm{x},t))$, together with the control increment system (\ref{eq:eqzj12}) (A feasible set), to ensure the existence of an unique increment $\bm{u}(\bm{\phi} (\bm{x},t))$. Here $\bm{\phi} (\bm{x},t)$ is the state at time $t$, especially if $t$ is fixed, the state $\bm{\phi} (\bm{x},t)$ can be explicitly updated. Moreover, $\bm{u}$ not only satisfies the PDE system \eqref{eq:eqzj12} but also ensures that the deformation field $\bm{\phi} (\bm{x},t)$ has good smoothness and diffeomorphism. Here $\mathcal{S}[{\bm{u}\big(\bm{\phi} (\cdot,t)\big)}] $ is the regularizer, $\tau>0$ is the regularization parameter. Hence $ T(\bm{\phi} (\bm{x},t))$ is spatially aligned with $ R(x)$.

\begin{remark} As we can observe, a well-posed $\bm{u}(\bm{\phi} (\bm{x},t))$ in image registration has been obtained in case of minimizing local $\mathcal{E}^t_0\left(\bm{u}\right)$ (at time $t$). Although the first-order update of $\bm{\phi} (\bm{x},t)$ is employed in energy $\mathcal{E}^t_0\left(\bm{u}\right)$, it essentially admits a solution $\bm{u}(\bm{\phi} (\bm{x},t))$ satisfying the constraint (\ref{eq:eqzj12}). It is well known that the \textbf{RK4} discretization of ODE \eqref{eq:eqzj14} is a stable higher-order accurate difference scheme, which has been widely used in image registration for obtaining a diffeomorphic solution \cite{MangA,Fair}. 

\end{remark}
  
\section{The proposed optimal control relaxation model for diffeomorphic registration}\label{sec:Section4} In this section, we aim to present an optimal control system modeling image registration, which can help to get a diffeomorphic deformation.

The diffeomorphic image registration is, in the space of diffeomorphism deformation field $\textbf{Diff}_{\bm{\varphi}}^{f(\bm{\varphi})}(\Omega)$ $\big(f(\bm{\varphi})>0$ and $ \int_\Omega  1/{f(\bm{\varphi}(\bm{x}))}\;d\bm{x} = |\Omega | \big)$, to seek an optimal solution $\bm{\varphi}$ with some regularity $ \mathcal{S}[{\bm{\varphi}}]$ such that $\mathcal{D}[R,T;{\bm{\varphi}}]$ is as small as possible. Here we consider diffeomorphic image registration model as a constrained optimization problem
\begin{equation}\label{model1}
\begin{array}{rll}
\min\limits_{\bm{\varphi}\in\textbf{Diff}_{\bm{\varphi}}^{f(\bm{\varphi})}(\Omega)}\big\{\mathcal{E}({\bm{\varphi}}):=\mathcal{D}[R,T;{\bm{\varphi}}] + \tau \mathcal{S}[{\bm{\varphi}}]\big\},
\end{array}
\end{equation}
where $\textbf{Diff}_{\bm{\varphi}}^{f(\bm{\varphi})}(\Omega)$ is defined by (\ref{eq:eqjb11d}). Usually the regularization term $ \mathcal{S}[{\bm{\varphi}}]$ can be elastic modulus \cite{broit1981optimal}, diffusion modulus \cite{fischer2002fast} or curvature modulus \cite{fischer2003curvature}, etc.

\begin{definition} For a function $\bm{f}:\Omega\rightarrow \mathbb{R}^2$, assume that
\[
\big[\bm{f}\big]^{k,\epsilon}_{\Omega}=\sup\limits_{\bm{x}, \bm{y} \in \Omega, \bm{x} \neq \bm{y}} \frac{|\partial^k\bm{f}(\bm{x})-\partial^k\bm{f}(\bm{y})|}{|\bm{x}-\bm{y}|^{\epsilon}}<+\infty,\;\;  k=0,1,\dots,\ell+1.
\]
A bounded H{\"o}lder subset $\textbf{\emph{HB}}^{\ell+1,\epsilon}$ is defined as 
\begin{equation}\label{eq_space}
\begin{split}
\textbf{\emph{HB}}^{\ell+1,\epsilon}:=\Big\{\bm{f}\in\mathcal{C}_0^{\ell+1,\; \epsilon}(\Omega, [0,1]): \exists \mathcal{L}>0& \text{ such that }\\ \|\bm{f}\|_\infty\leq 1 \text{ and }& \big[\bm{f}\big]^{k,\epsilon}_{\Omega}\leq \mathcal{L}, \forall k=0,\dots,\ell+1 \Big\}.
\end{split}
\end{equation}
\end{definition}

In the previous Section, we have known that the differential system (\ref{eq:eqjb11c}) exists at least one diffeomorphic solution ${\bm{\varphi}}({\bm{x}})$, so the solution set $\textbf{Diff}_{\bm{\varphi}}^{f(\bm{\varphi})}(\Omega)$ is not empty. A method to solve $\bm{\varphi}\in \textbf{Diff}_{\bm{\varphi}}^{f(\bm{\varphi})}(\Omega)$ is constructed, and one of the important steps is to solve (\ref{eq:eqzj12}). We denote the constraint set
$\textbf{DC}_{h(\bm{\phi}_t)}(\Omega) $ as the solution set of differential system
\begin{eqnarray}\label{eq_rot}
 \left\{ {\begin{array}{rll}
   \diver \bm{u}\big(\bm{\phi} (\bm{x},t)\big)=&-\cfrac{\partial h(\bm{\phi} (\bm{x},t),t)}{\partial t}, &\bm{x} \in \Omega_{\text{in}}, \\
   ~~~\bm{u}\big(\bm{\phi} (\bm{x},t)\big) =&\bm{0},~ &\bm{x} \in \partial \Omega,  \\
 \bm{u}(\bm{\phi}(\bm{x},t))&\in\textbf{HB}^{\ell+1,\epsilon}\subset\mathcal{C}_0^{\ell+1,\; \epsilon}(\Omega, [0,1])&\text{ and }  \bm{\phi}(\bm{x},t)\in\mathcal{C}^{\ell+1,\; \epsilon}(\Omega, [0,1]).\\
 \end{array}}\right.
\end{eqnarray}
Further we employ a new coordinate system $\bm{\omega}_t$ to represent the transformed grid $\bm{\phi} (\bm{x},t)$ at time $t$, hence $\textbf{DC}_{h(\bm{\omega}_t)}(\Omega)$ replacing $\textbf{DC}_{h(\bm{\phi}_t)}(\Omega) $ denotes the solution set of differential system
\begin{eqnarray}\label{eq_new_rot}
 \left\{ {\begin{array}{rll}
   \diver \bm{u}(\bm{\omega}_t)=&-\cfrac{\partial h(\bm{\omega}_t,t)}{\partial t}, &\bm{\omega}_t \in \Omega_{\text{in}}\\
   ~~~\bm{u}(\bm{\omega}_t) =&\bm{0},~ &\bm{\omega}_t \in \partial \Omega.  \\
 \end{array}} \right.
\end{eqnarray}
Since $\bm{\phi} (\bm{x},t)$ is a diffeomorphic mapping with respect to $\bm{x}$, thus one has $\bm{x}=\bm{\phi}^{-1}(\bm{\omega}_t)$.
Suppose also that $h(\bm{\omega}_t,t)\in \mathcal{H}_{g,\bm{\phi}}(\Omega,[0,1])$ is the $\ell$-th order continuously differentiable on $\Omega$, from the above differential system, thus we can easily see that $\bm{u}\in\mathcal{C}^{\ell+1}(\Omega,\mathbb{R}^2)$. We furtherly restrict the uniform boundedness of the first derivatives of $\bm{u}$ in (\ref{eq_new_rot}), so that such $\bm{u}$ has the characteristic
of equicontinuity. In order to employ a regularization $\mathcal{S}[\bm{u}]$ of $\bm{u}$ in the proposed registration model, we modify the constraint set $\textbf{DC}_{h(\bm{\omega}_t)}(\Omega)$ as
\begin{equation}\label{eq_new_rot_b}
\begin{split}
\textbf{DCH}_{h_t}(\Omega):=\Big\{ \bm{u}&\in\textbf{DC}_{h(\bm{\omega}_t)}(\Omega)\cap H^1(\Omega)
\Big\}.
\end{split}
\end{equation}
with the norm
\[\|u\|_{\text{DCH}}=\|u\|_{\mathcal{C}^{\ell+1}}+\|u\|_{H^1(\Omega)}.\]

\subsection{Compactness of the space $\textbf{DCH}_{h_t}(\Omega)$}
We expect that this new constraint set $\textbf{DCH}_{h_t}(\Omega)$ will have similar differential geometry properties to a diffeomorphic set $\textbf{DC}_{h(\bm{\phi}_t)}(\Omega)$, but the new set will be better in analysis for image registration problem, depending on the
solution space for $\bm{u}$. 

Proving existence of solutions for a functional such as $\mathcal{E}(\bm{u})$ may proceed in different ways;
below we establish the result by lower semi-continuity of the functional and sequential compactness of
space.

\begin{lemma}[The Bolzano-Weierstrass Theorem \cite{carl2018bolzano}]\label{lemma_4_2}
Assume that 
$\Omega$ is a compact closed set. Then every
continuous function $\bm{u}\in\textbf{\emph{DC}}_{h_t}(\Omega)$ on a compact set $\Omega$ is bounded, i.e., $\textbf{\emph{DC}}_{h_t}(\Omega)$ is a bounded function space in $\mathcal{C}^{\ell+1}$.
\end{lemma}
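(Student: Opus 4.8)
The plan is to produce a single constant $M$, independent of the particular $\bm{u}$, with $\|\bm{u}\|_{\mathcal{C}^{\ell+1}}=\sum_{k=0}^{\ell+1}\|\partial^k\bm{u}\|_\infty\le M$ for every $\bm{u}\in\textbf{DC}_{h_t}(\Omega)$. First I would unpack the membership: by construction $\textbf{DC}_{h_t}(\Omega)\subset\textbf{HB}^{\ell+1,\epsilon}\subset\mathcal{C}_0^{\ell+1,\epsilon}(\Omega,[0,1])$, so each $\bm{u}$ together with all its partial derivatives $\partial^k\bm{u}$ up to order $\ell+1$ is continuous on the compact set $\Omega$. The Bolzano--Weierstrass / extreme value theorem then guarantees that each such $\partial^k\bm{u}$ attains its maximum on $\Omega$, so every $\|\partial^k\bm{u}\|_\infty$ is finite; this already shows each element is a bounded $\mathcal{C}^{\ell+1}$ function. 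The genuine task is to make the bound \emph{uniform} over the whole set, and here I would exploit the two quantitative estimates carried by the definition of $\textbf{HB}^{\ell+1,\epsilon}$, namely $\|\bm{u}\|_\infty\le 1$ and $[\partial^k\bm{u}]^{\epsilon}_{\Omega}\le\mathcal{L}$ for $k=0,\dots,\ell+1$.

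The heart of the argument is an interpolation (Landau--Kolmogorov type) step that converts H\"older control of a derivative into sup-norm control, bootstrapped order by order. For the base case, $\|\partial^0\bm{u}\|_\infty=\|\bm{u}\|_\infty\le 1$. For the inductive step, assume $\|\partial^{k-1}\bm{u}\|_\infty\le C_{k-1}$ and suppose $|\partial^k\bm{u}(\bm{x}_0)|=A$ at some point $\bm{x}_0\in\Omega$. The bound $[\partial^k\bm{u}]^{\epsilon}\le\mathcal{L}$ forces $|\partial^k\bm{u}|\ge A/2$ on a ball $B(\bm{x}_0,r)$ with $r=(A/2\mathcal{L})^{1/\epsilon}$; integrating $\partial^k\bm{u}$ along an admissible segment of length comparable to $r$ inside $\Omega$ then changes $\partial^{k-1}\bm{u}$ by at least $cAr$, which must not exceed $2C_{k-1}$. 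Combining the two relations eliminates $r$ and yields $A\le C(\mathcal{L},\epsilon,C_{k-1},\Omega)$, i.e. a uniform bound $\|\partial^k\bm{u}\|_\infty\le C_k$. Iterating from $k=1$ to $k=\ell+1$ produces the desired constant $M=M(\Omega,\ell,\epsilon,\mathcal{L})$.

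I would expect the main obstacle to be precisely this passage from the H\"older seminorm of a derivative to its sup-norm: the raw membership in $\textbf{HB}^{\ell+1,\epsilon}$ records only the seminorms $[\partial^k\bm{u}]^{\epsilon}$ and the single sup bound $\|\bm{u}\|_\infty\le1$, so the intermediate sup-norms $\|\partial^k\bm{u}\|_\infty$ for $1\le k\le\ell+1$ are not controlled without the interpolation inequality above. Two supporting points need care. First, the integration segments must stay inside $\Omega$; this is where I would invoke the regularity of $\partial\Omega$ assumed in Theorem \ref{thm01} (a $\mathcal{C}^{\ell+3,\epsilon}$ boundary yields an interior cone/segment condition) together with the homogeneous boundary value $\bm{u}|_{\partial\Omega}=\bm{0}$, so the estimate applies uniformly up to the boundary. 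Second, the conclusion is genuinely uniform only if the H\"older bound $\mathcal{L}$ is read as a fixed structural constant of the space $\textbf{HB}^{\ell+1,\epsilon}$ (a sublevel set); under that reading $M$ is independent of $\bm{u}$, and $\textbf{DC}_{h_t}(\Omega)$ is a bounded subset of $\mathcal{C}^{\ell+1}$ as claimed, which is exactly the uniform boundedness hypothesis required for the Arzel\`a--Ascoli compactness step that follows.
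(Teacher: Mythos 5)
Your proof is correct in substance, but it takes a genuinely different---and more substantive---route than the paper, which offers no argument at all for this lemma: it is presented as an instance of the Bolzano--Weierstrass/extreme value theorem and dispatched by the citation \cite{carl2018bolzano}. That classical fact only yields that each individual $\bm{u}\in\textbf{DC}_{h_t}(\Omega)$ has finite $\mathcal{C}^{\ell+1}$ norm (each $\partial^k\bm{u}$ is continuous on the compact set $\Omega$, hence attains its maximum); it does not by itself give the conclusion the paper actually needs downstream, namely that $\textbf{DC}_{h_t}(\Omega)$ is a bounded \emph{subset} of $\mathcal{C}^{\ell+1}$ with a single constant working for all its elements, which is the hypothesis fed into Arzel\`a--Ascoli together with Lemma \ref{lemma_4_5}. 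Your Landau--Kolmogorov bootstrap supplies exactly that missing uniformity from the two structural bounds in \eqref{eq_space}, $\|\bm{u}\|_\infty\le 1$ and $[\bm{u}]^{k,\epsilon}_\Omega\le\mathcal{L}$, and your decision to read $\mathcal{L}$ as a constant of the space rather than of the individual function is the same reading the paper itself adopts in the proof of Lemma \ref{lemma_4_5}, where it takes $\sup_{\bm{u}\in\mathcal{F}}[\bm{u}]^{k,\epsilon}_\Omega<\mathcal{L}$. Two technical points in your inductive step deserve an explicit sentence in a full write-up: the segment along which you integrate must point in the coordinate direction singled out by the large partial derivative, and an interior cone condition does not automatically contain that direction, so you should either invoke the standard interpolation inequality for H\"older norms on domains with the cone property (e.g.\ Gilbarg--Trudinger, Lemma 6.35) or prove the bound on interior balls and finish with a covering argument; and when $(A/2\mathcal{L})^{1/\epsilon}$ exceeds the admissible segment length $r_0(\Omega)$ you must cap $r$ at $r_0$ and handle that case separately (it still yields $A\le 2C_{k-1}/(c\,r_0)$). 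Neither affects the validity of your argument; on balance you prove more than the paper's citation delivers, and it is precisely the uniform version you prove that the subsequent compactness claim requires.
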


\begin{definition}
A subset $\mathcal{F}$ of $\mathcal{C}(\Omega,\mathbb{R}^d)$ is equicontinuous if for every $\varepsilon>0$ there exists some $\delta$ such that
$\|\bm{\phi}(\bm{x})-\bm{\phi}(\bm{y})\|<\varepsilon$ for all $\bm{\phi} \in \mathcal{F}$, and $\|\bm{x}-\bm{y}\|<\delta,\; \bm{x}, \bm{y} \in \Omega$.
\end{definition}

\begin{lemma}\label{lemma_4_5}
For any $h(\bm{\omega}_t,t)\in\mathcal{H}_{g,\bm{\phi}}(\Omega,[0,1])$, any bounded sequence $\mathcal{F}=\{\bm{u}^i\}_{i=1}^{+\infty}\subset\textbf{\emph{DC}}_{h_t}(\Omega)$ is equicontinuous in $\mathcal{C}^{\ell+1}$.
\end{lemma}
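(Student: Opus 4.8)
The goal is to show that any bounded sequence $\mathcal{F}=\{\bm{u}^i\}_{i=1}^{+\infty}\subset\textbf{DC}_{h_t}(\Omega)$ is equicontinuous in $\mathcal{C}^{\ell+1}$. The plan is to exploit the defining structure of the constraint set $\textbf{DC}_{h(\bm{\omega}_t)}(\Omega)$ from \eqref{eq_new_rot}, together with the membership $\bm{u}(\bm{\omega}_t)\in\textbf{HB}^{\ell+1,\epsilon}$ encoded in \eqref{eq_rot}. The key observation is that the definition \eqref{eq_space} of the bounded H\"older subset $\textbf{HB}^{\ell+1,\epsilon}$ already supplies a \emph{uniform} H\"older bound: there exists a constant $\mathcal{L}>0$ (independent of $i$, by the hypothesis that the sequence lies in $\textbf{DC}_{h_t}(\Omega)$ and hence in $\textbf{HB}^{\ell+1,\epsilon}$) with $\big[\bm{u}^i\big]^{k,\epsilon}_{\Omega}\leq \mathcal{L}$ for every $k=0,1,\dots,\ell+1$ and every $i$.

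**Main steps.** First I would fix an index $k\in\{0,1,\dots,\ell+1\}$ and recall that by definition of the H\"older seminorm,
\[
\frac{|\partial^k\bm{u}^i(\bm{x})-\partial^k\bm{u}^i(\bm{y})|}{|\bm{x}-\bm{y}|^{\epsilon}}\leq \mathcal{L}
\quad\text{for all } \bm{x},\bm{y}\in\Omega,\ \bm{x}\neq\bm{y},
\]
so that $|\partial^k\bm{u}^i(\bm{x})-\partial^k\bm{u}^i(\bm{y})|\leq \mathcal{L}\,|\bm{x}-\bm{y}|^{\epsilon}$ for every $i$. Second, given $\varepsilon>0$, I would choose $\delta=(\varepsilon/\mathcal{L})^{1/\epsilon}$, which is independent of $i$ precisely because $\mathcal{L}$ is uniform over the sequence. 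Then whenever $\|\bm{x}-\bm{y}\|<\delta$ we obtain $|\partial^k\bm{u}^i(\bm{x})-\partial^k\bm{u}^i(\bm{y})|\leq \mathcal{L}\,\delta^{\epsilon}=\varepsilon$, uniformly in $i$ and in the derivative order $k$. Summing (or taking the maximum) over the finitely many values $k=0,\dots,\ell+1$ and invoking the boundedness from Lemma \ref{lemma_4_2} to control the values themselves then yields equicontinuity of $\mathcal{F}$ in the $\mathcal{C}^{\ell+1}$ topology.

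**Role of the constraint system and the main obstacle.** The differential system \eqref{eq_new_rot} enters to guarantee that the sequence genuinely sits inside $\textbf{HB}^{\ell+1,\epsilon}$, so that the uniform H\"older constant $\mathcal{L}$ is available; the divergence constraint $\diver\bm{u}(\bm{\omega}_t)=-\partial_t h(\bm{\omega}_t,t)$ with $h\in\mathcal{H}_{g,\bm{\phi}}(\Omega,[0,1])$ being $\ell$-times continuously differentiable is what places $\bm{u}$ in $\mathcal{C}^{\ell+1}$ in the first place. The genuine delicate point is not the H\"older estimate itself but verifying that the constant $\mathcal{L}$ can be taken \emph{independent of $i$}: this requires that the defining bound in \eqref{eq_space} be inherited uniformly across a \emph{bounded} sequence. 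The hard part will therefore be justifying that boundedness of $\{\bm{u}^i\}$ in the $\textbf{DCH}$-norm (equivalently in $\mathcal{C}^{\ell+1}$, via Lemma \ref{lemma_4_2}) forces a common H\"older constant, rather than merely individual finite ones; I would close this by using the uniform first-derivative bound that the paper explicitly imposes after \eqref{eq_new_rot}, namely the restriction of ``the uniform boundedness of the first derivatives of $\bm{u}$,'' which is exactly the ingredient converting pointwise finiteness of each seminorm into a sequence-uniform bound.
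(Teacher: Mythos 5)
Your proposal is correct and follows essentially the same route as the paper's proof: both extract the uniform H\"older seminorm bound $\mathcal{L}$ from the membership $\bm{u}^i\in\textbf{HB}^{\ell+1,\epsilon}$, deduce $|\partial^k\bm{u}^i(\bm{x})-\partial^k\bm{u}^i(\bm{y})|\leq\mathcal{L}|\bm{x}-\bm{y}|^{\epsilon}$ for each $k=0,\dots,\ell+1$, and choose $\delta$ of the form $(\varepsilon/(c\,\mathcal{L}))^{1/\epsilon}$ independently of $i$ to conclude equicontinuity in $\mathcal{C}^{\ell+1}$. Your explicit attention to why $\mathcal{L}$ can be taken uniform over the whole sequence is a point the paper glosses over (it simply writes $\Lambda^k:=\sup_{\bm{u}\in\mathcal{F}}\big[\bm{u}\big]^{k,\epsilon}_{\Omega}<\mathcal{L}$ without comment), but this does not change the substance of the argument.
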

\begin{proof}
For any $\bm{u}\in\mathcal{F}\subset\textbf{{DC}}_{h_t}(\Omega)\subset\textbf{{HB}}^{\ell+1,\epsilon}$ and $\mu\in(0,1]$, $\exists \mathcal{L}>0$, from \eqref{eq_space} one has 
\begin{equation*}
 \left\{ {\begin{array}{rll}
     &\diver \bm{u}(\bm{x})=-\cfrac{\partial h(\bm{x},t)}{\partial t}, &\bm{x}  \in \Omega_{\text{in}}\\
   &~~~\bm{u}(\bm{x}) =\bm{0},~ &\bm{x} \in \partial \Omega.  \\
  &\big[\bm{u}\big]^{k,\epsilon}_{\Omega}=\sup\limits_{\bm{x}, \bm{y} \in \Omega, \bm{x} \neq \bm{y}} \frac{|\partial^k\bm{u}(\bm{x})-\partial^k\bm{u}(\bm{y})|}{|\bm{x}-\bm{y}|^{\epsilon}}<\mathcal{L}, & k=0,1,\dots,\ell+1.
 \end{array}} \right.
\end{equation*}
Moreover, the bounded
\[\Lambda^k:=\sup\limits_{\bm{u}\in\mathcal{F}}\big[\bm{u}\big]^{k,\epsilon}_{\Omega}<\mathcal{L},\quad \text{for } k=0,1,\dots,\ell+1\]
is defined. For any $\bm{x}, \bm{y}\in\Omega$,  
we have
$$
\|\bm{u}(\bm{y})-\bm{u}(\bm{x})\|_{\mathcal{C}^{\ell+1}}\leq (\ell+1)\mathcal{L}|\bm{y}-\bm{x}|^\mu,
$$
if taking $\delta=\left(\frac{\varepsilon}{(\ell+1)\mathcal{L}}\right)^{1/\epsilon}$, thus we deduce
$$
\|\bm{u}(\bm{y})-\bm{u}(\bm{x})\|_{\mathcal{C}^{\ell+1}}\leq \varepsilon,
$$
which shows the assertion.
\end{proof}

Combining the boundedness of $\textbf{{DC}}_{h_t}(\Omega)$ from Lemma \ref{lemma_4_2} and the sequentially equicontinuity of $\textbf{{DC}}_{h_t}(\Omega)$ in $\mathcal{C}^{\ell+1}$ from Lemma \ref{lemma_4_5}, we deduce that $\textbf{{DC}}_{h_t}(\Omega)$ is sequentially compact from Arzela-Ascoli Theorem.
From the compactness of $H^1(\Omega)$ \cite{EZeidler1985}, it follows

\begin{lemma}\label{lemma_4_6} $\textbf{\emph{DCH}}_{h_t}(\Omega)$ is sequentially compact.
\end{lemma}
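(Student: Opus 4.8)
The plan is to show that every sequence in $\textbf{DCH}_{h_t}(\Omega)$ admits a subsequence converging, in the norm $\|\cdot\|_{\text{DCH}}=\|\cdot\|_{\mathcal{C}^{\ell+1}}+\|\cdot\|_{H^1(\Omega)}$, to a limit that again lies in $\textbf{DCH}_{h_t}(\Omega)$. The key observation is that on the bounded domain $\Omega$ the embedding $\mathcal{C}^{\ell+1}(\Omega,\mathbb{R}^2)\hookrightarrow H^1(\Omega)$ is continuous, so that $\mathcal{C}^{\ell+1}$-convergence automatically upgrades to convergence in the combined $\text{DCH}$ norm. Thus it suffices to produce a $\mathcal{C}^{\ell+1}$-convergent subsequence and to verify that its limit stays in the constraint set; the $H^1$ component then comes for free.

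First I would fix an arbitrary sequence $\{\bm{u}^i\}_{i=1}^{+\infty}\subset\textbf{DCH}_{h_t}(\Omega)\subset\textbf{DC}_{h_t}(\Omega)$. By Lemma \ref{lemma_4_2} this sequence is uniformly bounded in $\mathcal{C}^{\ell+1}$, and by Lemma \ref{lemma_4_5} it is equicontinuous in $\mathcal{C}^{\ell+1}$ together with its derivatives up to order $\ell+1$. The Arzela-Ascoli theorem then yields a subsequence $\{\bm{u}^{i_k}\}$ and a limit $\bm{u}^\ast$ with $\bm{u}^{i_k}\to\bm{u}^\ast$ in $\mathcal{C}^{\ell+1}(\Omega,\mathbb{R}^2)$; in particular both $\bm{u}^{i_k}$ and $\nabla\bm{u}^{i_k}$ converge uniformly on $\Omega$.

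Next I would check that $\bm{u}^\ast\in\textbf{DC}_{h_t}(\Omega)$, i.e.\ that the constraint set is closed under this convergence. Since the divergence involves only first derivatives, the uniform convergence $\nabla\bm{u}^{i_k}\to\nabla\bm{u}^\ast$ lets us pass to the limit in $\diver\bm{u}^{i_k}(\bm{\omega}_t)=-\partial_t h(\bm{\omega}_t,t)$ (the right-hand side being fixed, exactly as frozen in the proof of Lemma \ref{lemma_4_5}), so $\bm{u}^\ast$ solves the same div-system; uniform convergence on $\partial\Omega$ preserves the homogeneous boundary condition $\bm{u}^\ast=\bm{0}$; and each H\"older seminorm bound $[\bm{u}^{i_k}]^{k,\epsilon}_{\Omega}\leq\mathcal{L}$ passes to $\bm{u}^\ast$ by lower semicontinuity of the supremum, so $\bm{u}^\ast\in\textbf{HB}^{\ell+1,\epsilon}$. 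Hence $\bm{u}^\ast\in\textbf{DC}_{h_t}(\Omega)$, which also makes rigorous the Arzela-Ascoli sequential compactness of $\textbf{DC}_{h_t}(\Omega)$ announced in the text.

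Finally I would upgrade to the $\text{DCH}$ norm: because $\Omega$ is bounded, uniform convergence of $\bm{u}^{i_k}$ and $\nabla\bm{u}^{i_k}$ implies convergence in $L^2(\Omega)$ of the functions and of their first derivatives, hence $\bm{u}^{i_k}\to\bm{u}^\ast$ in $H^1(\Omega)$; combined with the $\mathcal{C}^{\ell+1}$-convergence this gives $\|\bm{u}^{i_k}-\bm{u}^\ast\|_{\text{DCH}}\to 0$ with $\bm{u}^\ast\in\textbf{DC}_{h_t}(\Omega)\cap H^1(\Omega)=\textbf{DCH}_{h_t}(\Omega)$. I expect the main obstacle to be the closedness step of the third paragraph---guaranteeing that the limit still solves the constrained div-system and remains in $\textbf{HB}^{\ell+1,\epsilon}$; once the right-hand side is treated as fixed at the frozen state at time $t$, this reduces to passing to the limit in a linear first-order identity under uniform $\mathcal{C}^1$-convergence, which is routine, so the assembly of Lemmas \ref{lemma_4_2} and \ref{lemma_4_5} with the continuous $\mathcal{C}^{\ell+1}\hookrightarrow H^1$ embedding carries the proof through.
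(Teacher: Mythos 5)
Your proof is correct and its core is the same as the paper's: combine the boundedness from Lemma \ref{lemma_4_2} with the equicontinuity from Lemma \ref{lemma_4_5} and invoke Arzela--Ascoli to extract a $\mathcal{C}^{\ell+1}$-convergent subsequence. Where you diverge is in the treatment of the $H^1$ component and of closedness. The paper disposes of the $H^1$ part with a one-line appeal to ``the compactness of $H^1(\Omega)$'' (citing Zeidler), which as stated is not a usable fact ($H^1(\Omega)$ is not compact; at best one could mean a Rellich--Kondrachov or weak-compactness argument, and even then one would have to reconcile the resulting weak or $L^2$ limit with the $\mathcal{C}^{\ell+1}$ limit). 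Your route avoids this entirely: since $\Omega$ is bounded, uniform convergence of $\bm{u}^{i_k}$ and $\nabla\bm{u}^{i_k}$ already gives $H^1$-convergence to the same limit, so the combined $\|\cdot\|_{\text{DCH}}$ convergence comes for free from the $\mathcal{C}^{\ell+1}$ convergence. You also supply the closedness step --- passing to the limit in the divergence constraint, the boundary condition, and the H\"older seminorm bounds --- which the paper omits but which is needed for the limit to lie in $\textbf{DCH}_{h_t}(\Omega)$ rather than merely exist. In short, your argument is more self-contained and repairs the weakest link in the paper's version; the only detail both treatments gloss over is that Arzela--Ascoli at order $\ell+1$ also needs uniform bounds on the intermediate derivatives, which follow from $\|\bm{u}\|_\infty\leq 1$ together with the H\"older seminorm bounds in \eqref{eq_space} by standard interpolation.
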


\subsection{Optimal control relaxation for the diffeomorphic registration (\ref{model1})} In this part, we first consider the nonlinear dynamical system of the form
\begin{equation}\label{eq:eq13_opti_control}
\left\{\begin{array}{rl}
\cfrac{d}{dt}\;\bm{\omega}_t&=\frac{\bm{u}(\bm{\omega}_t)}{h(\bm{\omega}_t,t)}\\
\bm{\omega}(\bm{x},0)&=\bm{\omega}_0=\bm{x},
\end{array}\right.
\end{equation}
where $\bm{\omega}_t$ is the state function at time $t$ and $\bm{u}(\bm{\omega}_t)$ is control increment function, and $\bm{\omega}_0=\bm{\omega}(\bm{x},0)=\bm{x}$, $\bm{\omega}_1=\bm{\omega}(\bm{x},1)=\bm{\varphi}(\bm{x})$ are the initial state and final state (solution) at $t=0$ and $t=1$, respectively. Moreover, $h(\bm{\omega}_t,t)$ is a real-valued continuously differentiable function with respect to $\bm{\omega}_t$ and time $t$. 

To indirectly solve the diffeomorphic deformation $\bm{\varphi}$ by using the construction, we will use $\textbf{DCH}_{h_t}(\Omega)$ to replace the constraints $\textbf{Diff}_{\bm{\varphi}}^{f(\bm{\varphi})}(\Omega)$ in model (\ref{model1}), and propose a \emph{time-sequence} registration model
to find the optimal control increment $\bm{u}(\bm{\omega}_t)$ as follow
\begin{equation}\label{eq:eq13}
\begin{array}{rl}
&\min\limits_{\bm{u}(\bm{\omega}_t)\in\textbf{DCH}_{h_{t}}(\Omega)} \Bigg\{\mathcal{E}\big(\bm{\omega}_t,\bm{u}\big):=\mathcal{D}^{\alpha(t)}_{T,R}[\bm{u}]+ \tau \mathcal{S}^t[\bm{u}]\Bigg\}\\
\\
&\qquad\textbf{ s.t.}\;\left\{\begin{array}{rl}\cfrac{d}{dt}\;\bm{\omega}_t&=\bm{v}(\bm{\omega}_t,t):=\frac{\bm{u}(\bm{\omega}_t)}{h(\bm{\omega}_t,t)},\\
\bm{\omega}(\bm{x},0)&=\bm{\omega}_0=\bm{x},
\end{array}\right.
\end{array}
\end{equation}
where $\mathcal{D}^{\alpha(t)}_{T,R}[\bm{u}]:= \mathcal{D}[R,T;{\bm{\omega}_t+\alpha(t)\bm{u}(\bm{\omega}_t)}]= \cfrac{1}{2} \mathlarger{\int_\Omega }\Big(T\big({\bm{\omega}_t+\alpha(t)\bm{u}(\bm{\omega}_t}) \big)- R(\bm{\phi}^{-1}(\bm{\omega}_t))\Big)^2 d\bm{\omega}_t$,
$\mathcal{S}^t[\bm{u}] := \cfrac{1}{2} \mathlarger{\int_\Omega} |\nabla {\bm{u}(\bm{\omega}_t)}|^2 d\bm{\omega}_t$, $\tau>0$ and $\alpha(t)=\frac{\Delta t}{h(\bm{\omega}_t,t)}>0$.
Using the temporal \textbf{RK4} discretization in $[0,1]$, an update scheme of the diffeomorphic mapping $ \bm{\omega}_{t_k} $ in (\ref{eq:eq13}) is given as
\begin{equation}\label{eq:eq15}
\left\{\begin{array}{rl}
\bm{\omega}_{t_k}=& \textbf{RK4}(\bm{\omega}_{t_{k-1}},\bm{v},\Delta t),\\
\bm{u}(\bm{\omega}_{t_k})=&\argmin\limits_{\bm{u}(\bm{\omega}_{t_k})\in\textbf{DCH}_{h_{t_k}}(\Omega)} 
\left\{\mathcal{E}^{t_k}\big(\bm{u}\big):=\mathcal{E}\big(\bm{\omega}_{t_k},\bm{u}\big)\right\},\\
\bm{\omega}(\bm{x},0)=&\bm{\omega}_0=\bm{x}.
\end{array}
\right.
\end{equation}

The optimization energy $\mathcal{E}^{t_k}({\bm{u}})$ in (\ref{eq:eq15}) is a non-convex functional, however the constraint ${\bm{u}(\bm{\omega}_{t_k})}\in\textbf{DCH}_{h_{t_k}}(\Omega)$ ensures that the deformation field $\bm{\omega}_{t_k}=\bm{\phi} (\bm{x},t_k)$ is a diffeomorphic solution of (\ref{eq:zj2c}), hence $\bm{\phi} (\bm{x},t_k)$ is reversible and its inverse transform $\bm{x}=\bm{\phi}^{-1}(\bm{\omega}_{t_k})$ is continuous.
\subsection{ALMM for the optimal control system} In the numerical implementation, we employ the ALMM algorithm \cite{wu2010augmented} to solve \emph{time-sequence} constrained optimization problem
\begin{equation}\label{eq:eq15_subp}
\min\limits_{\bm{u}(\bm{\omega}_{{\iota}})\in\textbf{DCH}_{h_{{\iota}}}(\Omega)} \Big\{\mathcal{E}^{{\iota}}\big(\bm{u}\big):= \mathcal{D}^{\alpha(\iota)}_{T,R}[\bm{u}] + \tau \mathcal{S}[{\bm{u}}] \Big\}\qquad\text{ with } \iota:=t_k,
\end{equation}
where $\bm{\omega}_{{\iota}}=\bm{\omega}_{{t_k}}$ denotes the state (coordinate system) at time $t_k$.

To proceed, we make the following assumptions:
\begin{assumption}\label{assump1} 
For any $\bm{x}\in\Omega$, there exists a constant $\mathcal{M}_0>0$, assume that two images $T(\bm{x})$ and $R(\bm{x})$ satisfies:
\[\max\left\{\|T\|_{L^\infty(\Omega)},\|R\|_{L^\infty(\Omega)},\|\nabla T\|_{L^\infty(\Omega)},\|\nabla^2 T\|_{L^\infty(\Omega)}\right\}<\mathcal{M}_0<+\infty;\] 
hence $\|T-R\|_{L^\infty(\Omega)}<2\mathcal{M}_0$.  
Moreover, assume that the regularization functional $\mathcal{S}[{\bm{u}}]$ in (\ref{eq:eq15_subp}) statisfies that

(\romannumeral1) lower semi-continuity (LSC), i.e., let $\bm{u}_j\in\textbf{\emph{DCH}}_{h_t}(\Omega)$ and $\bm{u}_j   
\xlongrightarrow[\mbox{\emph{L}}^1(\Omega)]{*}\bm{u}$, then 
\[\mathcal{S}[{\bm{u}}]\leq \varliminf\limits_{j\rightarrow{+\infty}}\mathcal{S}[{\bm{u}_j}].\]

(\romannumeral2) lower boundedness, i.e., there exists a constant $L$, $\forall\; \bm{u}\in\textbf{\emph{DCH}}_{h_t}(\Omega)$, one has 
\[\mathcal{S}[{\bm{u}}]\geq L.\]
\end{assumption}

Next, in addition to Assumption \ref{assump1}, let us analyze the properties of the energy functional $\mathcal{E}^{{\iota}}(\boldsymbol{u})$.
\begin{lemma}[Lower semi-continuity of $\mathcal{E}^{{\iota}}(\boldsymbol{u})$]
\label{lem_lsc}
Assume that $T(\boldsymbol{x})$ is differentiable with respect to $\boldsymbol{x}$.
 Then the functional $\mathcal{W}(\boldsymbol{u}):=\mathcal{D}^{\alpha(\iota)}_{T,R}[\bm{u}]$ is LSC and consequently
 $\mathcal{E}^{{\iota}}(\boldsymbol{u})$ from (\ref{eq:eq15_subp}) is also LSC, i.e.
for each $\epsilon>0$ and $\bm{u}(\bm{\omega}_{{\iota}})\in\textbf{\emph{DCH}}_{h_t}(\Omega)$ with
 $|\mathcal{W}(\boldsymbol{u})|<\infty$, there exists a $\delta(\epsilon)>0$ such that for
all $\boldsymbol{v}\in\textbf{\emph{DCH}}_{h_t}(\Omega)$ satisfying
 $\|\boldsymbol{u}-\boldsymbol{v}\|_{L^2(\Omega)}<\delta(\epsilon)$, the inequality holds:
\( \mathcal{W}(\boldsymbol{u})<\mathcal{W}(\boldsymbol{v})+\epsilon.\)
\end{lemma}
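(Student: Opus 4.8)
The plan is to prove something slightly stronger than the stated lower semi-continuity: I would show that $\mathcal{W}$ is \emph{Lipschitz continuous} with respect to the $L^2(\Omega)$ metric, from which the $\epsilon$--$\delta$ inequality follows at once. The first step is to note that the state $\bm{\omega}_{\iota}$ is frozen during the minimization in (\ref{eq:eq15}): at each time level $\bm{\omega}_{t_k}$ is produced by the \textbf{RK4} update \emph{before} the increment $\bm{u}(\bm{\omega}_{t_k})$ is sought. Hence both $\mathcal{W}(\bm{u})$ and $\mathcal{W}(\bm{v})$ are integrals over the \emph{same} variable $\bm{\omega}_{\iota}$ against the \emph{same} function $R(\bm{\phi}^{-1}(\bm{\omega}_{\iota}))$, and they differ only through the arguments $\bm{\omega}_{\iota}+\alpha(\iota)\bm{u}$ and $\bm{\omega}_{\iota}+\alpha(\iota)\bm{v}$ of $T$, so no change of variables or Jacobian factor intervenes.

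Next I would estimate $\mathcal{W}(\bm{u})-\mathcal{W}(\bm{v})$ by the factorization $a^2-b^2=(a-b)(a+b)$ with $a=T(\bm{\omega}_{\iota}+\alpha\bm{u})-R$ and $b=T(\bm{\omega}_{\iota}+\alpha\bm{v})-R$. The difference factor is controlled by the Lipschitz bound $\|\nabla T\|_{L^\infty}<\mathcal{M}_0$ from Assumption \ref{assump1}, giving $|a-b|\le\mathcal{M}_0\,\alpha\,|\bm{u}-\bm{v}|$, while the sum factor obeys $|a+b|\le 2\|T\|_{L^\infty}+2\|R\|_{L^\infty}<4\mathcal{M}_0$. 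Since $h(\bm{\omega}_{\iota},\iota)>0$ is continuous on the compact set $\Omega$, it is bounded below by some $h_{\min}>0$, so $\alpha(\iota)=\Delta t/h(\bm{\omega}_{\iota},\iota)\le\bar\alpha:=\Delta t/h_{\min}$. Combining these bounds and applying Cauchy--Schwarz yields
\begin{equation*}
|\mathcal{W}(\bm{u})-\mathcal{W}(\bm{v})|\le 2\mathcal{M}_0^2\,\bar\alpha\int_\Omega|\bm{u}-\bm{v}|\,d\bm{\omega}_{\iota}\le 2\mathcal{M}_0^2\,\bar\alpha\,|\Omega|^{1/2}\,\|\bm{u}-\bm{v}\|_{L^2(\Omega)}.
\end{equation*}
Taking $\delta(\epsilon)=\epsilon/(2\mathcal{M}_0^2\bar\alpha|\Omega|^{1/2})$ then forces $\mathcal{W}(\bm{u})<\mathcal{W}(\bm{v})+\epsilon$ whenever $\|\bm{u}-\bm{v}\|_{L^2(\Omega)}<\delta(\epsilon)$, which is precisely the asserted lower semi-continuity of $\mathcal{W}$.

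For the full energy $\mathcal{E}^{\iota}(\bm{u})=\mathcal{W}(\bm{u})+\tau\mathcal{S}[\bm{u}]$ I would invoke that lower semi-continuity is preserved under addition: $\mathcal{W}$ is continuous (hence LSC) by the estimate above, $\mathcal{S}[\bm{u}]$ is LSC by Assumption \ref{assump1}(\romannumeral1), and $\tau>0$, so $\mathcal{E}^{\iota}$ is LSC. I expect the Lipschitz estimate for $\mathcal{W}$ to be the substantive step; the only genuine subtlety is the bookkeeping of topologies, since the bound for $\mathcal{W}$ is phrased in the $L^2$ metric while the hypothesis on $\mathcal{S}$ is stated for weak-$*$ $L^1$ convergence. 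I would handle this by working along a minimizing sequence inside the sequentially compact set $\textbf{DCH}_{h_t}(\Omega)$ (Lemma \ref{lemma_4_6}), where the relevant convergence controls both terms simultaneously, so that the two semi-continuity estimates may legitimately be added; the additivity of LSC itself is then routine.
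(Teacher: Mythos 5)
Your proof is correct, and it reaches the same conclusion by a slightly different algebraic route. The paper's proof writes $T(\bm{x}+\bm{u})=T(\bm{x}+\bm{v})+\nabla T(\bm{s})\cdot(\bm{u}-\bm{v})$ by the mean value theorem, expands the square inside $\mathcal{W}$, and bounds the resulting linear and quadratic terms via Assumption \ref{assump1} to get $|\mathcal{W}(\bm{u})-\mathcal{W}(\bm{v})|\leq c_1\|\bm{u}-\bm{v}\|_{L^2(\Omega)}+c_2\|\bm{u}-\bm{v}\|^2_{L^2(\Omega)}$, and then solves a quadratic to choose $\delta(\epsilon)=(-c_1+\sqrt{c_1^2+4\epsilon c_2})/(2c_2)$. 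You instead factor $a^2-b^2=(a-b)(a+b)$, bound the difference factor by the Lipschitz constant $\mathcal{M}_0\alpha(\iota)$ of $T$ and the sum factor by $4\mathcal{M}_0$, and obtain a genuine Lipschitz estimate with an explicit linear $\delta(\epsilon)$. Both arguments rest on exactly the same ingredients (differentiability of $T$ plus the $L^\infty$ bounds of Assumption \ref{assump1}), and both actually prove continuity of $\mathcal{W}$ in $L^2$, not merely lower semi-continuity; your version buys a cleaner modulus of continuity and sidesteps the degenerate case $c_2=0$ in the paper's formula for $\delta$. Your closing remark about the topology mismatch between the $L^2$ statement of the lemma and the weak-$*$ $L^1$ hypothesis on $\mathcal{S}$ in Assumption \ref{assump1}(\romannumeral1) flags a genuine imprecision that the paper's proof passes over silently; resolving it on the sequentially compact set $\textbf{DCH}_{h_t}(\Omega)$ as you propose is a reasonable (and more careful) repair rather than a flaw in your argument.
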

\begin{proof}
Since the function $T(\boldsymbol{x})$ is differentiable, there exists a real number $\theta\in (0,\;1)$
such that
$$T(\boldsymbol{x}+\boldsymbol{u})=T(\boldsymbol{x}+\boldsymbol{v})+
\nabla T(\boldsymbol{s})\cdot \boldsymbol{h}$$
 where $\boldsymbol{s}=\boldsymbol{x}+\theta \boldsymbol{u}+(1-\theta)
 \boldsymbol{v}$ and $\boldsymbol{h}=\boldsymbol{u}-\boldsymbol{v}$. Hence we have
\[\mathcal{W}(\boldsymbol{u})=\mathcal{W}(\boldsymbol{v})+
\int_\Omega\;\left((T(\boldsymbol{x}+\boldsymbol{v})-R(\boldsymbol{x}))
(\nabla T(\boldsymbol{s})\cdot \boldsymbol{h})
+\boldsymbol{h}^T(\nabla T(\boldsymbol{s})\nabla T(\boldsymbol{s})^T)\boldsymbol{h}\right)d\Omega,\]
From Assumption \ref{assump1}, the above equation leads to
\[|\mathcal{W}(\boldsymbol{u})-\mathcal{W}(\boldsymbol{v})|\leq c_1\|\boldsymbol{u}-\boldsymbol{v}\|_{L^2(\Omega)}+c_2 \|\boldsymbol{u}-\boldsymbol{v}\|^2_{L^2(\Omega)}, c_1\geq 0, c_2\geq 0.\]
In this case, we have, if taking
            $\delta(\epsilon)=(-c_1+\sqrt{c_1^2+4\epsilon c_2})/(2c_2)$,
\(\mathcal{W}(\boldsymbol{u})<\mathcal{W}(\boldsymbol{v})+\epsilon.\)
 Consequently combined with Assumption \ref{assump1} for term (ii) i.e. LSC of
  $\mathcal{S}[{\bm{u}}]$,
 the functional $\mathcal{E}^{\iota}(\boldsymbol{u})$ from (\ref{eq:eq15_subp})
  is  lower semi-continuous.
This proves the Lemma.
\end{proof}

\begin{theorem}
The minimization problem (\ref{eq:eq15_subp}) has at least one solution for any ${{\iota}}\in[0,1]$, $\alpha({{\iota}})$ and $\tau>0$.
\end{theorem}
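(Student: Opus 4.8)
The plan is to apply the direct method of the calculus of variations, leaning on the two structural facts already in hand: the sequential compactness of the feasible set (Lemma~\ref{lemma_4_6}) and the lower semi-continuity of the objective (Lemma~\ref{lem_lsc}).

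First I would confirm that the infimum is well defined and finite. The feasible set $\textbf{DCH}_{h_{\iota}}(\Omega)$ is nonempty because the construction in Section~\ref{sect3_construct} produces at least one admissible increment $\bm{u}(\bm{\omega}_{\iota})$ solving the divergence constraint (\ref{eq_new_rot}) and lying in $H^1(\Omega)$. Since the fidelity term $\mathcal{D}^{\alpha(\iota)}_{T,R}[\bm{u}]$ is the integral of a square, hence nonnegative, and since $\mathcal{S}[\bm{u}]\geq L$ by the lower-boundedness hypothesis of Assumption~\ref{assump1}(ii), we have
\[
\mathcal{E}^{\iota}(\bm{u})=\mathcal{D}^{\alpha(\iota)}_{T,R}[\bm{u}]+\tau\,\mathcal{S}[\bm{u}]\geq \tau L>-\infty
\]
for every $\bm{u}\in\textbf{DCH}_{h_{\iota}}(\Omega)$ and every $\tau>0$, so that $m:=\inf_{\bm{u}\in\textbf{DCH}_{h_{\iota}}(\Omega)}\mathcal{E}^{\iota}(\bm{u})$ exists and is finite.

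Next I would take a minimizing sequence $\{\bm{u}_j\}_{j=1}^{\infty}\subset\textbf{DCH}_{h_{\iota}}(\Omega)$ with $\mathcal{E}^{\iota}(\bm{u}_j)\to m$. By Lemma~\ref{lemma_4_6} the set $\textbf{DCH}_{h_{\iota}}(\Omega)$ is sequentially compact, so I can extract a subsequence (not relabelled) converging to a limit $\bm{u}^{*}\in\textbf{DCH}_{h_{\iota}}(\Omega)$; because the limit remains in the feasible set, it inherits the divergence constraint automatically. Invoking the lower semi-continuity of $\mathcal{E}^{\iota}$ along this convergence (Lemma~\ref{lem_lsc}) then gives
\[
\mathcal{E}^{\iota}(\bm{u}^{*})\leq\varliminf_{j\to\infty}\mathcal{E}^{\iota}(\bm{u}_j)=m.
\]
Since $\bm{u}^{*}$ is itself admissible, $\mathcal{E}^{\iota}(\bm{u}^{*})\geq m$, and the two inequalities force $\mathcal{E}^{\iota}(\bm{u}^{*})=m$, so $\bm{u}^{*}$ is a minimizer and the problem (\ref{eq:eq15_subp}) attains its infimum.

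The step I expect to demand the most care is reconciling the mode of convergence delivered by the compactness argument with the modes required by the two semi-continuity statements. Lemma~\ref{lemma_4_6} combines Arzela--Ascoli in $\mathcal{C}^{\ell+1}$ with compactness of $H^1(\Omega)$, yielding uniform convergence of the subsequence together with strong $L^2$ convergence. I would need to check that this convergence is simultaneously strong enough to pass to the limit in the fidelity term $\mathcal{W}(\bm{u})=\mathcal{D}^{\alpha(\iota)}_{T,R}[\bm{u}]$, where Lemma~\ref{lem_lsc} uses only an $L^2$ estimate so that uniform (hence $L^2$) convergence suffices, and weak enough to fall under the weak-$*$ $L^1$ hypothesis of Assumption~\ref{assump1}(i) governing the lower semi-continuity of $\mathcal{S}$. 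Since uniform convergence on the bounded domain $\Omega$ implies both $L^2$ and weak-$*$ $L^1$ convergence, the two requirements are compatible, and this matching of topologies is the only genuinely non-routine point in the argument.
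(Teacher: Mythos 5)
Your proposal is correct and follows essentially the same route as the paper: the paper's proof is a one-line invocation of the direct method, combining the lower semi-continuity of Lemma~\ref{lem_lsc} with the sequential compactness of Lemma~\ref{lemma_4_6} and deferring the standard minimizing-sequence argument to the general framework in \cite[Prop.~38.12(d)]{EZeidler1985}. You have simply written out that argument in full, including the finiteness of the infimum via Assumption~\ref{assump1}(ii) and the reconciliation of convergence modes, which the paper leaves implicit.
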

\begin{proof} The above result (Lemma \ref{lem_lsc}), combined with the compactness of $\mathcal{E}^{\iota}(\boldsymbol{u})$ (Lemma \ref{lemma_4_6}) and
boundedness of $\textbf{DCH}_{h_\iota}(\Omega)$
 (i.e, $\boldsymbol{u}$ is
bounded in $\textbf{DCH}_{h_\iota}(\Omega)$), completes the proof
(see the general framework in \cite[Prop. 38.12(d)]{EZeidler1985}).
\end{proof}

To solve the control increment $\bm{u}(\bm{\omega}_{{\iota}})$ of proposed optimal control problem (\ref{eq:eq15}), we first write (\ref{eq:eq15_subp}) as a saddle point problem defined by
\begin{equation}\label{eq_eq20_a}
\begin{split}
\min\limits_{\bm{u}_\iota}\max\limits_{\lambda_\iota}\left\{\mathcal{L}^{\iota}\big (\bm{u},\lambda):=\mathcal{E}^{{\iota}}\big(\bm{u}\big)- \mathlarger\int_{\Omega}{\lambda (\bm{\omega}_{{\iota}})}\Big(\diver{\bm{u}(\bm{\omega}_{{\iota}})}+\cfrac{\partial h(\bm{\omega}_{{\iota}},t)}{\partial t}\Big|_{t={\iota}}\Big)d\bm{\omega}_{\iota}\right\},
\end{split}
\end{equation}
where ${\lambda}_\iota={\lambda}(\bm{\omega}_{{\iota}})$ is a multiplier function. Obviously, the maximum w.r.t. $\lambda_\iota$ will be $+\infty$, unless $\diver{\bm{u}(\bm{\omega}_{{\iota}})}+\cfrac{\partial h(\bm{\omega}_{{\iota}},t)}{\partial t}\Big|_{t={\iota}}=0$, so this is equivalent to the original problem (\ref{eq:eq15_subp}). But this equivalence is not very useful and computationally. To deal easily with it, we add two 'proximal point' terms
\begin{equation}\label{eq_eq20_b}
\begin{split}
\min\limits_{\bm{u}_\iota}\max\limits_{\lambda_\iota}\Big\{\mathcal{L}^{{\iota}}_{{\beta}}\big (\bm{u},\lambda;\bm{u}^i,\lambda^i\big):=\mathcal{L}^{\iota}\big (\bm{u},\lambda)&+\frac{1}{2 \gamma} \int \big\| \bm{u}(\bm{\omega}_{{\iota}})-\bm{u}^{i}(\bm{\omega}_{{\iota}})\big\|^{2}d \bm{\omega}_{\iota}\\&-\frac{1}{2 \beta} \int \big\| \lambda(\bm{\omega}_{{\iota}})-\lambda^i(\bm{\omega}_{{\iota}})\big\|^{2}d \bm{\omega}_{\iota}\Big\},
\end{split}
\end{equation}
where $\gamma$ and $\beta >0$ are two penalty parameters. Maximization w.r.t. $\lambda$ is now trivial (a concave quadratic), yielding
\begin{equation}\label{eq:eq_multiplier_update}
\lambda(\bm{\omega}_{\iota})= {\lambda }^{i}(\bm{\omega}_{{\iota}}) - \beta\Big(\diver\bm{u}(\bm{\omega}_{{\iota}}) + \cfrac{\partial h(\bm{\omega}_{{\iota}},t)}{\partial t}\Big|_{t={\iota}}\Big).
\end{equation}
Inserting the above multiplier update $\lambda(\bm{\omega}_{\iota})$ into (\ref{eq_eq20_b}) leads to 
\begin{equation}\label{eq:eq20}
\begin{split}
\min\limits_{\bm{u}}\Bigg\{\mathcal{E}_{\bm{u}^i,\lambda^i}&(\bm{u}):= \mathcal{E}^{{\iota}}\big(\bm{u}\big)- \mathlarger\int_{\Omega}{\lambda^i(\bm{\omega}_{{\iota}})}\Big(\diver{\bm{u}(\bm{\omega}_{{\iota}})}+\cfrac{\partial h(\bm{\omega}_{{\iota}},t)}{\partial t}\Big|_{t={\iota}}\Big)d\bm{\omega}_{\iota}\\
+&\frac{1}{2 \gamma} \int \big\| \bm{u}(\bm{\omega}_{{\iota}})-\bm{u}^{i}(\bm{\omega}_{{\iota}})\big\|^{2}d \bm{\omega}_{\iota}+\cfrac{{{\beta }}}{2} \mathlarger\int_{\Omega} \Big(\diver{\bm{u}(\bm{\omega}_{{\iota}})}+\cfrac{\partial h(\bm{\omega}_{{\iota}},t)}{\partial t}\Big|_{t={\iota}}\Big)^2d\bm{\omega}_{\iota}\Bigg\}.
\end{split}
\end{equation}
Thus the iterative scheme of ALMM can be modified as
\begin{equation}\label{eq:eq21}
\left\{ {\begin{array}{rl}
  {{\bm{u}^{i + 1}(\bm{\omega}_{{\iota}})}}   &= \arg \min\limits_{\bm{u}} \mathcal{E}_{\bm{u}^i,\lambda^i}(\bm{u}), \\
  {{\lambda^{i + 1}(\bm{\omega}_{{\iota}})}}   
    &= {{\lambda }^{i}(\bm{\omega}_{{\iota}})} - {\beta}\Big(\diver{\bm{u}^{i + 1}(\bm{\omega}_{{\iota}})} + \cfrac{\partial h(\bm{\omega}_{{\iota}},t)}{\partial t}\Big|_{t={\iota}}\Big).
  \end{array}} \right.
\end{equation}
Hence a new iteration $\big(\bm{u}^{i + 1}(\bm{\omega}_{{\iota}}), {\lambda}^{i + 1}(\bm{x})\big)^T$ is solved.
 
\subsection{Convergence analysis} We now study the convergence of the proposed algorithm based on (\ref{eq:eq21}). The next two Lemmas and Assumption can apply not just to the saddle point problem (\ref{eq_eq20_a}) but also to the alternation iterations (\ref{eq:eq21}).
\begin{lemma}\label{constraint_opti_viq}
Let $\mathcal{X}$ be a closed convex set, $\mathcal{F}_1(\bm{v})$ and $\mathcal{F}_2(\bm{v})$ be convex functions on $\mathcal{X}$
and the first-order variation of $\mathcal{F}_2(\bm{v})$ exists. Assume that the solution set of the $\min\{\mathcal{F}_1(\bm{v})+\mathcal{F}_2(\bm{v})|\bm{v}\in\mathcal{X}\}$ is nonempty. Then
\begin{equation}\label{ineq_convegence_analysis0}
\bm{v}^*=\arg\min\{\mathcal{F}(\bm{v}):=\mathcal{F}_1(\bm{v})+\mathcal{F}_2(\bm{v})|\bm{v}\in\mathcal{X}\}
\end{equation}
if and only if
\begin{equation}\label{ineq_convegence_analysis}
\mathcal{F}_1(\bm{v})-\mathcal{F}_1(\bm{v}^*)+\mathcal{F}_2^\prime(\bm{v}^*)(\bm{v}-\bm{v}^*)\geq 0,\;\exists \bm{v}^*\in\mathcal{X} \text{ and }\forall \;\bm{v}\in\mathcal{X}. 
\end{equation}
\end{lemma}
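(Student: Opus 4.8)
The plan is to establish the stated equivalence as the standard first-order optimality (variational inequality) characterization for minimizing the sum of a possibly nonsmooth convex term $\mathcal{F}_1$ and a differentiable convex term $\mathcal{F}_2$ over the closed convex set $\mathcal{X}$, proving the two implications separately and relying throughout on convexity of $\mathcal{X}$, convexity of both $\mathcal{F}_1$ and $\mathcal{F}_2$, and the assumed existence of the first-order variation of $\mathcal{F}_2$. The nonemptiness of the solution set is used only to guarantee that a minimizer $\bm{v}^*$ exists to which the argument can be applied.

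For the necessity direction ($\Rightarrow$), I would start from a minimizer $\bm{v}^*$, fix an arbitrary $\bm{v}\in\mathcal{X}$ and a step $t\in(0,1]$, and use convexity of $\mathcal{X}$ to note that the segment point $\bm{v}_t:=(1-t)\bm{v}^*+t\bm{v}$ remains in $\mathcal{X}$. Minimality gives $\mathcal{F}(\bm{v}^*)\le\mathcal{F}(\bm{v}_t)$; I would then bound the $\mathcal{F}_1$ contribution from above by convexity, namely $\mathcal{F}_1(\bm{v}_t)\le(1-t)\mathcal{F}_1(\bm{v}^*)+t\,\mathcal{F}_1(\bm{v})$, substitute, and rearrange to isolate $t\big(\mathcal{F}_1(\bm{v}^*)-\mathcal{F}_1(\bm{v})\big)\le\mathcal{F}_2(\bm{v}_t)-\mathcal{F}_2(\bm{v}^*)$. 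Dividing by $t>0$ and passing to the limit $t\to0^{+}$ converts the right-hand difference quotient into the first-order variation $\mathcal{F}_2'(\bm{v}^*)(\bm{v}-\bm{v}^*)$, which, after moving all terms to one side, yields exactly the claimed inequality (\ref{ineq_convegence_analysis}).

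For the sufficiency direction ($\Leftarrow$), I would assume (\ref{ineq_convegence_analysis}) and invoke the gradient inequality for the convex differentiable functional $\mathcal{F}_2$, namely $\mathcal{F}_2(\bm{v})\ge\mathcal{F}_2(\bm{v}^*)+\mathcal{F}_2'(\bm{v}^*)(\bm{v}-\bm{v}^*)$ for every $\bm{v}\in\mathcal{X}$. Using this to bound the directional-derivative term in (\ref{ineq_convegence_analysis}) by $\mathcal{F}_2(\bm{v})-\mathcal{F}_2(\bm{v}^*)$ and collecting terms gives $\mathcal{F}_1(\bm{v})+\mathcal{F}_2(\bm{v})\ge\mathcal{F}_1(\bm{v}^*)+\mathcal{F}_2(\bm{v}^*)$, i.e. $\mathcal{F}(\bm{v})\ge\mathcal{F}(\bm{v}^*)$ for all $\bm{v}\in\mathcal{X}$, so that $\bm{v}^*$ is a global minimizer.

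The only delicate point I anticipate is the limit step in the necessity direction: one must verify that the difference quotient of $\mathcal{F}_2$ along the feasible direction $\bm{v}-\bm{v}^*$ actually converges to $\mathcal{F}_2'(\bm{v}^*)(\bm{v}-\bm{v}^*)$. This is guaranteed by the hypothesis that the first-order variation of $\mathcal{F}_2$ exists, while convexity of $\mathcal{F}_2$ additionally makes the quotient monotone in $t$, so the one-sided limit is well defined. The nonsmoothness of $\mathcal{F}_1$ causes no difficulty, because its contribution is disposed of by the convexity bound \emph{before} the limit is taken; no derivative of $\mathcal{F}_1$ is ever required.
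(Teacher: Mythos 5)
Your proposal is correct and follows essentially the same route as the paper's proof: the necessity direction bounds the $\mathcal{F}_1$ term by convexity before dividing by the step size and passing to the limit, and the sufficiency direction combines the variational inequality with the gradient inequality for the convex differentiable term $\mathcal{F}_2$. The only cosmetic difference is that you invoke the gradient inequality directly while the paper re-derives it from the difference quotient, and you add a brief (correct) justification of the limit step that the paper leaves implicit.
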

\begin{proof}
\textbf{(\ref{ineq_convegence_analysis0}) $\Longrightarrow$ (\ref{ineq_convegence_analysis}):} since $\bm{v}^*$ is a minimizer of the convex functional $\mathcal{F}(\bm{v})$ on $\mathcal{X}$, then for any $\bm{v}\in\mathcal{X}$ and $\epsilon\in(0,1]$, we have
\[\mathcal{F}_1(\bm{v}^*+\epsilon(\bm{v}-\bm{v}^*))\leq\epsilon\mathcal{F}_1(\bm{v})+(1-\epsilon)\mathcal{F}_1(\bm{v}^*),\]
\[\frac{\mathcal{F}_1(\bm{v}^*+\epsilon(\bm{v}-\bm{v}^*))-\mathcal{F}_1(\bm{v}^*)}{\epsilon}+\frac{\mathcal{F}_2(\bm{v}^*+\epsilon(\bm{v}-\bm{v}^*))-\mathcal{F}_2(\bm{v}^*)}{\epsilon}\geq 0,\]
which yields
\[\mathcal{F}_1(\bm{v})-\mathcal{F}_1(\bm{v}^*)+\frac{\mathcal{F}_2(\bm{v}^*+\epsilon(\bm{v}-\bm{v}^*))-\mathcal{F}_2(\bm{v}^*)}{\epsilon}\geq 0.\]
Letting $\epsilon\rightarrow 0^+$, the above inequality can be written as 
\[\mathcal{F}_1(\bm{v})-\mathcal{F}_1(\bm{v}^*)+\mathcal{F}_2^\prime(\bm{v}^*)(\bm{v}-\bm{v}^*)\geq 0,\;\forall \;\bm{v}\in\mathcal{X}. \]

\textbf{(\ref{ineq_convegence_analysis}) $\Longrightarrow$ (\ref{ineq_convegence_analysis0}):}  since $\mathcal{F}_2(\bm{v})$ is convex, i.e., $\mathcal{F}_2(\bm{v}^*+\epsilon(\bm{v}-\bm{v}^*))\leq\epsilon\mathcal{F}_2(\bm{v})+(1-\epsilon)\mathcal{F}_2(\bm{v}^*)$, then
\[\mathcal{F}_2(\bm{v})-\mathcal{F}_2(\bm{v}^*)-\frac{\mathcal{F}_2(\bm{v}^*+\epsilon(\bm{v}-\bm{v}^*))-\mathcal{F}_2(\bm{v}^*)}{\epsilon}\geq 0.\] 
Letting $\epsilon\rightarrow 0^+$, the above inequality can be written as
\[\mathcal{F}_2(\bm{v})-\mathcal{F}_2(\bm{v}^*)-\mathcal{F}_2^\prime(\bm{v}^*)(\bm{v}-\bm{v}^*)\geq 0,\;\forall \;\bm{v}\in\mathcal{X}.\]
By substituting it into (\ref{ineq_convegence_analysis}), we obtain
\[\mathcal{F}(\bm{v})-\mathcal{F}(\bm{v}^*)=\mathcal{F}_1(\bm{v})-\mathcal{F}_1(\bm{v}^*)+\mathcal{F}_2(\bm{v})-\mathcal{F}_2(\bm{v}^*)\geq 0,\;\forall \;\bm{v}\in\mathcal{X},\]
which implies that $\bm{v}^*$ is a solution of $\min\limits_{\bm{v}\in\mathcal{X}} \mathcal{F}(\bm{v})$. The proof is complete.
\end{proof}

The above variational equivalence perspective is particularly valid for convex problems, where we can expect to solve the subproblems $\min\limits_{\bm{u}} \mathcal{E}_{\bm{u}^i,\lambda^i}(\bm{u})$ in an alternatively iteration sense. This is reflected in the following assumption, which we require throughout this section.
\begin{assumption}\label{assump_1} 
For any saddle point $(\bm{u}_{\iota}^*,\lambda_{\iota}^*)$ of problem (\ref{eq_eq20_a}), assume that there exists a positive constant $\delta_{\iota}$ and a closed box defined by
\[\mathcal{N}\left((\bm{u}_{\iota}^*,\lambda_{\iota}^*),\delta_{\iota}\right)=\left\{(\bm{u},\lambda)\Big| \|\bm{u}-\bm{u}^*\|^2_{L^2(\Omega)}\leq\delta_{\iota},\;\|\lambda-\lambda^*\|^2_{L^2(\Omega)}\leq\delta_{\iota}\right\},\]
 such that 
\begin{equation}\label{eq_assume}
\begin{split}
{\mathcal{E}^{\iota}}^{\prime\prime}\big(\bm{u}_{\iota}\big)\bm{\xi}\bm{\xi}=\alpha^2({{\iota}})\int_\Omega \Big((T(\bm{\omega}_t+\bm{u}_{\iota})-&R(\bm{\phi}^{-1}(\bm{\omega}_t)))
(\bm{\xi}^T\nabla^2 T(\bm{\omega}_t+\bm{u}_{\iota})\bm{\xi})
\\&+\bm{\xi}^T(\nabla T(\bm{\omega}_t+\bm{u}_{\iota})\nabla T(\bm{\omega}_t+\bm{u}_{\iota})^T)\bm{\xi}\Big) d\bm{\omega}_{\iota}\geq 0
\end{split}
\end{equation}
for any $(\bm{u}_{\iota},\lambda_{\iota})\in \mathcal{N}\left((\bm{u}_{\iota}^*,\lambda_{\iota}^*),\delta_{\iota}\right)$ and $\bm{0}\neq\bm{\xi}\in\textbf{\emph{DCH}}_{h_t}(\Omega)$.
\end{assumption}

One of the most important consequences of the above Assumption condition is that the $\mathcal{E}^{{\iota}}\big(\bm{u}_{\iota}\big)$ is a convex functional on $\mathcal{N}\left((\bm{u}_{\iota}^*,\lambda_{\iota}^*),\delta_{\iota}\right)$. Now, we are ready to reformulate the saddle-point models (\ref{eq_eq20_a}) as special cases of mixed variational inequality for establishing the convergence for the scheme (\ref{eq:eq21}).
\begin{lemma}\label{constraint_opti_viq1}
Let $\mathcal{E}^{{\iota}}\big(\bm{u}_{\iota}\big)$ satisfy Assumption \ref{assump_1} and $(\bm{u}_{\iota}^*,\lambda_{\iota}^*)$ be the saddle point of problem (\ref{eq_eq20_a}), i.e.,
\[\mathcal{L}^{\iota}\big (\bm{u}_{\iota}^*,\lambda_{\iota})\leq\mathcal{L}^{\iota}\big (\bm{u}_{\iota}^*,\lambda_{\iota}^*)\leq \mathcal{L}^{\iota}\big (\bm{u}_{\iota},\lambda_{\iota}^*),\quad \forall\; (\bm{u}_{\iota},\lambda_{\iota})\in \mathcal{N}\left((\bm{u}_{\iota}^*,\lambda_{\iota}^*),\delta_{\iota}\right),\]
if and only if
\begin{equation}\label{eq_viq_opti_cond}
\left\{\begin{array}{rl}
&\mathcal{E}^{{\iota}}\big(\bm{u}_{\iota}\big)-\mathcal{E}^{{\iota}}\big(\bm{u}_{\iota}^*\big)+\int_\Omega(\bm{u}_{\iota}-\bm{u}_{\iota}^*)\nabla\left(\lambda_{\iota}^*\right)d\bm{\omega}_{\iota}\geq 0,\\
&\qquad\int_\Omega(\lambda_{\iota}-\lambda_{\iota}^*)(\diver\bm{u}_{\iota}^*+\cfrac{\partial h(\bm{\omega}_{{\iota}},t)}{\partial t}\Big|_{t={\iota}})d\bm{\omega}_{\iota}\geq 0,
  \end{array}\right. \forall\; (\bm{u}_{\iota},\lambda_{\iota})\in \mathcal{N}\left((\bm{u}_{\iota}^*,\lambda_{\iota}^*),\delta_{\iota}\right).
\end{equation}
Especially, by substituting, we obtain
\[\mathcal{E}^{{\iota}}\big(\bm{u}_{\iota}\big)-\mathcal{E}^{{\iota}}\big(\bm{u}_{\iota}^*\big)+\int_\Omega(\bm{u}_{\iota}-\bm{u}_{\iota}^*)\nabla\left(\lambda_{\iota}^*\right)+(\lambda_{\iota}-\lambda_{\iota}^*)(\diver\bm{u}_{\iota}^*+\cfrac{\partial h(\bm{\omega}_{{\iota}},t)}{\partial t}\Big|_{t={\iota}})d\bm{\omega}_{\iota}\geq 0\]
for any $(\bm{u}_{\iota},\lambda_{\iota})\in \mathcal{N}\left((\bm{u}_{\iota}^*,\lambda_{\iota}^*),\delta_{\iota}\right)$.
\end{lemma}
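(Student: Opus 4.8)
The plan is to read the two-sided saddle-point inequality as two separate one-sided extremal problems and to characterize each one through the variational-inequality criterion already established in Lemma~\ref{constraint_opti_viq}. Concretely, the right inequality $\mathcal{L}^{\iota}(\bm{u}^*_{\iota},\lambda^*_{\iota})\leq\mathcal{L}^{\iota}(\bm{u}_{\iota},\lambda^*_{\iota})$ says that $\bm{u}^*_{\iota}$ minimizes $\bm{u}\mapsto\mathcal{L}^{\iota}(\bm{u},\lambda^*_{\iota})$ over the $\bm{u}$-slice of $\mathcal{N}$, while the left inequality $\mathcal{L}^{\iota}(\bm{u}^*_{\iota},\lambda_{\iota})\leq\mathcal{L}^{\iota}(\bm{u}^*_{\iota},\lambda^*_{\iota})$ says that $\lambda^*_{\iota}$ maximizes the affine map $\lambda\mapsto\mathcal{L}^{\iota}(\bm{u}^*_{\iota},\lambda)$ over the $\lambda$-slice. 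Since $\mathcal{N}$ is a closed box built from two $L^2$-balls, each slice is a closed convex set, so Lemma~\ref{constraint_opti_viq} applies on it. The decisive structural input is that, although $\mathcal{E}^{\iota}$ is globally non-convex, Assumption~\ref{assump_1} forces $\mathcal{E}^{\iota}$ to be convex on the neighbourhood $\mathcal{N}$; this is what legitimizes using a convex-analysis criterion locally.

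For the $\bm{u}$-direction I would split $\mathcal{L}^{\iota}(\cdot,\lambda^*_{\iota})=\mathcal{F}_1+\mathcal{F}_2$ with $\mathcal{F}_1(\bm{u})=\mathcal{E}^{\iota}(\bm{u})$ (convex on $\mathcal{N}$) and the affine coupling term $\mathcal{F}_2(\bm{u})=-\int_{\Omega}\lambda^*_{\iota}\big(\diver\bm{u}+\partial_t h|_{t=\iota}\big)\,d\bm{\omega}_{\iota}$, whose first variation is $\mathcal{F}_2'(\bm{u}^*_{\iota})(\bm{u}-\bm{u}^*_{\iota})=-\int_{\Omega}\lambda^*_{\iota}\,\diver(\bm{u}-\bm{u}^*_{\iota})\,d\bm{\omega}_{\iota}$. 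An integration by parts converts this into $\int_{\Omega}\nabla\lambda^*_{\iota}\cdot(\bm{u}-\bm{u}^*_{\iota})\,d\bm{\omega}_{\iota}$, the boundary term vanishing because every admissible increment satisfies $\bm{u}=\bm{0}$ on $\partial\Omega$ inside $\textbf{DCH}_{h_{\iota}}(\Omega)$. Feeding this into Lemma~\ref{constraint_opti_viq} yields exactly the first inequality of (\ref{eq_viq_opti_cond}), and the equivalence is two-way because Lemma~\ref{constraint_opti_viq} is itself an ``if and only if''.

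For the $\lambda$-direction the functional $\lambda\mapsto\mathcal{L}^{\iota}(\bm{u}^*_{\iota},\lambda)$ is affine, so its maximization is the minimization of its negative; I would apply Lemma~\ref{constraint_opti_viq} with $\mathcal{F}_1\equiv 0$ and $\mathcal{F}_2(\lambda)=\int_{\Omega}\lambda\big(\diver\bm{u}^*_{\iota}+\partial_t h|_{t=\iota}\big)\,d\bm{\omega}_{\iota}$, which is linear and hence convex with $\mathcal{F}_2'(\lambda^*_{\iota})(\lambda-\lambda^*_{\iota})=\int_{\Omega}(\lambda-\lambda^*_{\iota})\big(\diver\bm{u}^*_{\iota}+\partial_t h|_{t=\iota}\big)\,d\bm{\omega}_{\iota}$. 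This produces the second inequality of (\ref{eq_viq_opti_cond}), again as an equivalence. Summing the two componentwise inequalities then gives the concluding combined estimate verbatim.

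I expect the main obstacle to lie not in the algebra but in justifying the local convex-analysis machinery: one must confirm that the slices of the box $\mathcal{N}$ are precisely the closed convex sets on which Lemma~\ref{constraint_opti_viq} was proved, that Assumption~\ref{assump_1} genuinely upgrades the positive-semidefinite Hessian bound (\ref{eq_assume}) into convexity of $\mathcal{E}^{\iota}$ throughout $\mathcal{N}$ (rather than only at the single point $\bm{u}^*_{\iota}$), and that $\lambda^*_{\iota}$ carries enough regularity for the integration by parts to be valid with a vanishing boundary contribution. Once these three points are secured, the two invocations of Lemma~\ref{constraint_opti_viq} and the final summation are routine.
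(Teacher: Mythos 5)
Your proposal is correct, and it follows exactly the route the paper intends: the paper states Lemma~\ref{constraint_opti_viq1} without an explicit proof (deferring to the mixed variational-inequality framework of \cite{Gu2014}), but the preceding Lemma~\ref{constraint_opti_viq} is proved precisely so that it can be applied slice-wise to the two one-sided saddle-point inequalities, and the integration-by-parts identity you invoke to turn $-\int_\Omega\lambda^*_{\iota}\,\diverl(\bm{u}_{\iota}-\bm{u}^*_{\iota})\,d\bm{\omega}_{\iota}$ into $\int_\Omega(\bm{u}_{\iota}-\bm{u}^*_{\iota})\cdot\nabla\lambda^*_{\iota}\,d\bm{\omega}_{\iota}$ is the same one the paper uses later in (\ref{eq_viq_diver}). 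The three caveats you flag (convexity of $\mathcal{E}^{\iota}$ on all of $\mathcal{N}$, convexity of the slices, and regularity of $\lambda^*_{\iota}$) are all either asserted by the paper in the remark following Assumption~\ref{assump_1} or implicit in its later use of the Gauss formula, so your argument fills the gap the paper leaves faithfully.
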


The mixed variational inequality serves as an unified mathematical model was presented in \cite{Gu2014}, for discrete convex optimization, we generalize it to infinite-dimensional saddle-point problem.
From this point of view, it is natural to expect that the sequence (\ref{eq:eq21}) based on ALMM algorithm is convergent, provided $\{(\bm{u}_{\iota}^{k},\lambda_{\iota}^{k})\}$ satisfies the Fej{\'e}r monotone condition \cite{Bauschke2011}. We are now ready to show this important result which can be seen as a infinite-dimensional variant of \cite{Gu2014}.
\begin{theorem}\label{constraint_opti_viq2}
For any $(\bm{u}^0_{\iota},\lambda^0_{\iota})\in \mathcal{N}\left((\bm{u}_{\iota}^*,\lambda_{\iota}^*),\delta_{\iota}\right)$, let $\mathcal{E}^{{\iota}}\big(\bm{u}_{\iota}\big)$ satisfy Assumption \ref{assump_1} and $(\bm{u}_{\iota}^{i+1},\lambda_{\iota}^{i+1})$ be the solution of the ALMM algorithm (\ref{eq:eq21}), and there exists a saddle point $(\overline{\bm{u}_{\iota}^*},\overline{\lambda_{\iota}^*})$ of problem (\ref{eq_eq20_a}) such that 
\begin{equation}\label{eq_sequency_convegence}
\bm{u}_{\iota}^i\xrightarrow{L^2(\Omega)}\overline{\bm{u}_{\iota}^*},\quad \lambda_{\iota}^i\xrightarrow{L^2(\Omega)}\overline{\lambda_{\iota}^*},\quad\text{ as } i\rightarrow+\infty.
\end{equation}
\end{theorem}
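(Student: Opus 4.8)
The plan is to recast each half-step of the ALMM iteration (\ref{eq:eq21}) as a variational inequality and then to build a Fej\'er-monotone Lyapunov functional measuring the distance of $(\bm{u}_\iota^i,\lambda_\iota^i)$ to a saddle point (throughout, $\iota$ is fixed and I suppress it on the iterates). First I would apply Lemma \ref{constraint_opti_viq} to the $\bm{u}$-subproblem $\bm{u}^{i+1}=\arg\min_{\bm{u}}\mathcal{E}_{\bm{u}^i,\lambda^i}(\bm{u})$ of (\ref{eq:eq20}), taking $\mathcal{F}_1=\mathcal{E}^{\iota}$ (convex on the box $\mathcal{N}$ by Assumption \ref{assump_1}) and $\mathcal{F}_2$ the remaining smooth quadratic terms. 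Differentiating the multiplier term and the augmented penalty, integrating by parts (boundary terms vanish since $\bm{u}=\bm{0}$ on $\partial\Omega$), and inserting the explicit update $\lambda^{i+1}=\lambda^i-\beta(\diver\bm{u}^{i+1}+\partial_t h|_{t=\iota})$, the gradient contributions collapse into $\nabla\lambda^{i+1}$, giving
\[\mathcal{E}^{\iota}(\bm{u})-\mathcal{E}^{\iota}(\bm{u}^{i+1})+\int_\Omega(\bm{u}-\bm{u}^{i+1})\cdot\Big(\nabla\lambda^{i+1}+\tfrac{1}{\gamma}(\bm{u}^{i+1}-\bm{u}^i)\Big)\,d\bm{\omega}_\iota\geq 0.\]
From Lemma \ref{constraint_opti_viq1} the saddle point obeys the feasibility identity $\diver\bm{u}^*+\partial_t h|_{t=\iota}=0$, so the multiplier update reads as the exact relation $\diver(\bm{u}^{i+1}-\bm{u}^*)=\tfrac{1}{\beta}(\lambda^i-\lambda^{i+1})$.

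Next I would combine these with the saddle-point characterization (\ref{eq_viq_opti_cond}). Setting $\bm{u}=\bm{u}^*$ in the iterate inequality and $\bm{u}=\bm{u}^{i+1}$ in the first line of (\ref{eq_viq_opti_cond}) and adding, the $\mathcal{E}^{\iota}$ values cancel and the two $\nabla\lambda$ integrals regroup into $\int_\Omega(\bm{u}^{i+1}-\bm{u}^*)\cdot\nabla(\lambda^*-\lambda^{i+1})\,d\bm{\omega}_\iota$. Integrating this by parts and substituting the constraint relation turns it into a pure multiplier pairing, leaving
\[\tfrac{1}{\beta}\!\int_\Omega(\lambda^i-\lambda^{i+1})(\lambda^{i+1}-\lambda^*)\,d\bm{\omega}_\iota+\tfrac{1}{\gamma}\!\int_\Omega(\bm{u}^*-\bm{u}^{i+1})\cdot(\bm{u}^{i+1}-\bm{u}^i)\,d\bm{\omega}_\iota\geq 0.\]
Applying the polarization identity $2\langle a-b,b-c\rangle=\|a-c\|^2-\|a-b\|^2-\|b-c\|^2$ to each pairing and defining $\Theta_i:=\tfrac{1}{2\gamma}\|\bm{u}^i-\bm{u}^*\|_{L^2}^2+\tfrac{1}{2\beta}\|\lambda^i-\lambda^*\|_{L^2}^2$, this rearranges into the central descent estimate
\[\Theta_i-\Theta_{i+1}\ \geq\ \tfrac{1}{2\gamma}\|\bm{u}^{i+1}-\bm{u}^i\|_{L^2}^2+\tfrac{1}{2\beta}\|\lambda^{i+1}-\lambda^i\|_{L^2}^2\ \geq\ 0.\]

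From this Fej\'er-monotone inequality the conclusion follows along standard lines. Since $\{\Theta_i\}$ is nonincreasing and bounded below it converges; telescoping yields $\sum_i\big(\|\bm{u}^{i+1}-\bm{u}^i\|_{L^2}^2+\|\lambda^{i+1}-\lambda^i\|_{L^2}^2\big)<\infty$, so the successive differences vanish, while boundedness of $\Theta_i$ keeps $\{(\bm{u}^i,\lambda^i)\}$ bounded. Invoking the sequential compactness of $\textbf{DCH}_{h_\iota}(\Omega)$ (Lemma \ref{lemma_4_6}) I would extract a strongly convergent subsequence $(\bm{u}^{i_k},\lambda^{i_k})\to(\overline{\bm{u}^*},\overline{\lambda^*})$; strong convergence is exactly what permits passing to the limit inside the nonlinear $\mathcal{E}^{\iota}$ term and, together with $\|\bm{u}^{i+1}-\bm{u}^i\|\to0$, showing the limit satisfies (\ref{eq_viq_opti_cond}) and hence is itself a saddle point. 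Finally, because the descent estimate holds relative to \emph{every} saddle point, I apply it with $(\bm{u}^*,\lambda^*)=(\overline{\bm{u}^*},\overline{\lambda^*})$: the associated $\Theta_i$ converges and vanishes along the subsequence, so it vanishes entirely, upgrading subsequential to full $L^2$ convergence of the whole sequence, which is precisely (\ref{eq_sequency_convegence}).

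The main obstacle I anticipate is not the Fej\'er bookkeeping but guaranteeing that the whole trajectory remains inside the box $\mathcal{N}\left((\bm{u}_\iota^*,\lambda_\iota^*),\delta_\iota\right)$ on which Assumption \ref{assump_1} grants convexity of $\mathcal{E}^{\iota}$: the iterate variational inequalities are valid only there, so the descent estimate, and thus the entire scheme, is self-consistent only if the iterates never leave $\mathcal{N}$. This must be secured by induction: assuming $(\bm{u}^i,\lambda^i)\in\mathcal{N}$ the estimate gives $\Theta_{i+1}\le\Theta_i$, whence $\tfrac{1}{2\gamma}\|\bm{u}^{i+1}-\bm{u}^*\|^2\le\Theta_{i+1}\le\Theta_0$ (and likewise for $\lambda$), so a sufficiently small initial gap together with a sufficiently strong proximal weight $\gamma$ (which pins $\bm{u}^{i+1}$ near $\bm{u}^i$) confines $(\bm{u}^{i+1},\lambda^{i+1})$ to $\mathcal{N}$ and closes the induction. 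A secondary subtlety is that weak $L^2$ compactness is too crude for the nonconvex data term, so the stronger compactness of $\textbf{DCH}_{h_\iota}(\Omega)$ must be used to produce strong convergence before the limit passage.
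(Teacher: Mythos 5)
Your proposal follows essentially the same route as the paper's proof: variational-inequality characterizations of the ALMM iterates and of the saddle point, combined via integration by parts into the Fej\'er-monotone descent estimate on $\tfrac{1}{2\gamma}\|\bm{u}^i-\bm{u}^*\|^2_{L^2}+\tfrac{1}{2\beta}\|\lambda^i-\lambda^*\|^2_{L^2}$, followed by vanishing of successive differences, identification of a cluster point as a saddle point, and the upgrade to full-sequence convergence. If anything, your write-up is slightly more careful than the paper's at the final steps (explicit subsequence extraction via Lemma \ref{lemma_4_6} and re-application of the Fej\'er estimate relative to the cluster point, where the paper simply asserts convergence) and in flagging that the iterates must be shown by induction to remain in the box $\mathcal{N}\left((\bm{u}_{\iota}^*,\lambda_{\iota}^*),\delta_{\iota}\right)$, a localization issue the paper takes for granted.
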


\begin{proof}
\textbf{Step 1.} We should prove that $(\bm{u}_{\iota}^{i},\lambda_{\iota}^{i})$ is a convergent sequence on $\mathcal{N}\left((\bm{u}_{\iota}^*,\lambda_{\iota}^*),\delta_{\iota}\right)$.

Since $\mathcal{E}^{{\iota}}\big(\bm{u}\big)$ is a convex functional on $\mathcal{N}\left((\bm{u}_{\iota}^*,\lambda_{\iota}^*),\delta_{\iota}\right)$, for any $(\bm{u}_{\iota},\lambda_{\iota})\in \mathcal{N}\left((\bm{u}_{\iota}^*,\lambda_{\iota}^*),\delta_{\iota}\right)$, there exists the solution
$(\bm{u}_{\iota}^{i+1},\lambda_{\iota}^{i+1})\in \mathcal{N}\left((\bm{u}_{\iota}^*,\lambda_{\iota}^*),\delta_{\iota}\right)$ of the ALMM algorithm (\ref{eq:eq21}) satisfying that
\begin{equation}\label{eq_viq_opti_2}
\left\{ {\begin{array}{rl}
&\mathcal{E}^{{\iota}}\big(\bm{u}_{\iota}\big)-\mathcal{E}^{{\iota}}\big(\bm{u}_{\iota}^{i+1}\big)+\frac{1}{\gamma}\int_\Omega(\bm{u}_{\iota}-\bm{u}_{\iota}^{i+1})\left(\bm{u}_{\iota}^{i+1}-\bm{u}_{\iota}^{i}\right)d\bm{\omega}_{\iota}
\\&\quad\qquad+\int_\Omega(\bm{u}_{\iota}-\bm{u}_{\iota}^{i+1})\nabla\left(\lambda_{\iota}^{i}-\beta(\diver\bm{u}_{\iota}^{i+1}+\cfrac{\partial h(\bm{\omega}_{{\iota}},t)}{\partial t}\Big|_{t={\iota}})\right)d\bm{\omega}_{\iota}\geq 0,\\
&\quad\qquad\int_\Omega(\lambda_{\iota}-\lambda_{\iota}^{i+1})\left(\lambda_{\iota}^{i+1}-\lambda_{\iota}^{i}+\beta(\diver\bm{u}_{\iota}^{i+1}+\cfrac{\partial h(\bm{\omega}_{{\iota}},t)}{\partial t}\Big|_{t={\iota}})\right)d\bm{\omega}_{\iota}\geq 0.
  \end{array}} \right.
\end{equation}
By taking the multiplier update ${{\lambda }_{\iota}^{i+1}}={{\lambda }_{\iota}^{i}} - {\beta}\Big(\diver{\bm{u}_{\iota}^{i + 1}} + \cfrac{\partial h(\bm{\omega}_{{\iota}},t)}{\partial t}\Big|_{t={\iota}}\Big)$, we have
\begin{equation}\label{eq_viq_opti}
\left\{ {\begin{array}{rl}
\mathcal{E}^{{\iota}}\big(\bm{u}_{\iota}\big)&-\mathcal{E}^{{\iota}}\big(\bm{u}_{\iota}^{i+1}\big)+\int_\Omega(\bm{u}_{\iota}-\bm{u}_{\iota}^{i+1})\left(\nabla\left(\lambda_{\iota}^{i+1}\right)+\frac{1}{\gamma}(\bm{u}_{\iota}^{i+1}-\bm{u}_{\iota}^{i})\right)d\bm{\omega}_{\iota}\geq 0,\\
&\int_\Omega(\lambda_{\iota}-\lambda_{\iota}^{i+1})\left((\diver\bm{u}_{\iota}^{i+1}+\cfrac{\partial h(\bm{\omega}_{{\iota}},t)}{\partial t}\Big|_{t={\iota}})+\frac{1}{\beta}(\lambda_{\iota}^{i+1}-\lambda_{\iota}^{i})\right)d\bm{\omega}_{\iota}\geq 0.
  \end{array}} \right.
\end{equation}
Let us recall that the Gauss formula and the divergence theorem can be employed to obtain
\begin{equation}\label{eq_viq_diver}
\int_\Omega(\bm{u}_{\iota}-\bm{u}_{\iota}^{i+1})\left(\nabla\lambda_{\iota}-\nabla\lambda_{\iota}^{i+1}\right)+(\diver\bm{u}_{\iota}-\diver\bm{u}_{\iota}^{i+1})\left(\lambda_{\iota}-\lambda_{\iota}^{i+1}\right)d\bm{\omega}_{\iota}=0.
\end{equation}
Combining (\ref{eq_viq_opti}) and (\ref{eq_viq_diver}), we derive
\begin{equation}\label{eq_viq_opti3}
\begin{split}
\frac{1}{\gamma}&\int_\Omega(\bm{u}_{\iota}-\bm{u}_{\iota}^{i+1})(\bm{u}_{\iota}^{i+1}-\bm{u}_{\iota}^{i})d\bm{\omega}_{\iota}+\frac{1}{\beta}\int_\Omega(\lambda_{\iota}-\lambda_{\iota}^{i+1})(\lambda_{\iota}^{i+1}-\lambda_{\iota}^{i})d\bm{\omega}_{\iota}\\&\geq\mathcal{E}^{{\iota}}\big(\bm{u}_{\iota}^{i+1}\big)-\mathcal{E}^{{\iota}}\big(\bm{u}_{\iota}\big)+\int_\Omega(\bm{u}_{\iota}^{i+1}-\bm{u}_{\iota})\nabla\left(\lambda_{\iota}\right)+(\lambda_{\iota}^{i+1}-\lambda_{\iota})(\diver\bm{u}_{\iota}+\cfrac{\partial h(\bm{\omega}_{{\iota}},t)}{\partial t}\Big|_{t={\iota}})d\bm{\omega}_{\iota}.
\end{split}
\end{equation}
If taking $(\bm{u}_{\iota},\lambda_{\iota})=(\bm{u}_{\iota}^*,\lambda_{\iota}^*)$ and following Lemma \ref{constraint_opti_viq1}, the inequality (\ref{eq_viq_opti3}) can be rewritten as 
\begin{equation}\label{eq_viq_ineq}
\begin{split}
\frac{1}{\gamma}&\int_\Omega(\bm{u}_{\iota}^*-\bm{u}_{\iota}^{i+1})(\bm{u}_{\iota}^{i+1}-\bm{u}_{\iota}^{i})d\bm{\omega}_{\iota}+\frac{1}{\beta}\int_\Omega(\lambda_{\iota}^*-\lambda_{\iota}^{i+1})(\lambda_{\iota}^{i+1}-\lambda_{\iota}^{i})d\bm{\omega}_{\iota}\\&\geq\mathcal{E}^{{\iota}}\big(\bm{u}_{\iota}^{i+1}\big)-\mathcal{E}^{{\iota}}\big(\bm{u}_{\iota}^*\big)+\int_\Omega(\bm{u}_{\iota}^{i+1}-\bm{u}_{\iota}^*)\nabla\left(\lambda_{\iota}^*\right)+(\lambda_{\iota}^{i+1}-\lambda_{\iota}^*)(\diver\bm{u}_{\iota}^*+\cfrac{\partial h(\bm{\omega}_{{\iota}},t)}{\partial t}\Big|_{t={\iota}})d\bm{\omega}_{\iota}\geq 0.
\end{split}
\end{equation}
Note that if $\int_\Omega \bm{f}^T(\bm{g}-\bm{f})d\bm{\omega}_{\iota}\geq 0$, then 
\[\|\bm{f}\|^2_{L^2(\Omega)}=\int_\Omega\bm{f}^T\bm{f}d\bm{\omega}_{\iota}\leq \int_\Omega\bm{g}^T\bm{g}d\bm{\omega}_{\iota}-\int_\Omega(\bm{g}-\bm{f})^T(\bm{g}-\bm{f})d\bm{\omega}_{\iota}=\|\bm{g}\|^2_{L^2(\Omega)}-\|\bm{g}-\bm{f}\|^2_{L^2(\Omega)}.\]
The fact that the left side in (\ref{eq_viq_ineq}) satisfies $\int_\Omega \bm{f}^T(\bm{g}-\bm{f})d\bm{\omega}_{\iota}\geq 0$ implies that
\begin{equation}\label{eq_viq_opti4}
\begin{split}
\varpi^{i+1}+\vartheta^{i+1}\leq\varpi^{i},
\end{split}
\end{equation}
where $\varpi^{i+1}:=\frac{\beta}{\gamma}\|\bm{u}_{\iota}^{i+1}-\bm{u}_{\iota}^*\|^2_{L^2(\Omega)}+\|\lambda_{\iota}^{i+1}-\lambda_{\iota}^*\|^2_{L^2(\Omega)}$ and $\vartheta^{i+1}:=\frac{\beta}{\gamma}\|\bm{u}_{\iota}^{i+1}-\bm{u}_{\iota}^i\|^2_{L^2(\Omega)}+\|\lambda_{\iota}^{i+1}-\lambda_{\iota}^i\|^2_{L^2(\Omega)}$. Since the sequence $\{\varpi^{i+1}\}$ is positive, bounded and nonincreasing, thus it is convergent and so $\{\vartheta^{i+1}\}$ tends to zero, hence there exists $(\overline{\bm{u}_{\iota}^*}, \overline{\lambda_{\iota}^*})\in \mathcal{N}\left((\bm{u}_{\iota}^*,\lambda_{\iota}^*),\delta_{\iota}\right)$, we deduce from the preceding inequalities that
\[(\bm{u}_{\iota}^i,\lambda_{\iota}^i)\xrightarrow{L^2(\Omega)} (\overline{\bm{u}_{\iota}^*}, \overline{\lambda_{\iota}^*})\quad \text{ as } i\rightarrow +\infty,\]
where the sequence $\{(\bm{u}_{\iota}^i,\lambda_{\iota}^i)\}$ generated by (\ref{eq:eq21}) is Fej{\'e}r monotone sequence  \cite{Bauschke2011}.

\textbf{Step 2.} Next, we have to prove that $(\overline{\bm{u}_{\iota}^*}, \overline{\lambda_{\iota}^*})$ is a saddle point of the Lagrangian functional $\mathcal{L}^{\iota}\big (\bm{u},\lambda)$. For any $(\bm{u}_{\iota},\lambda_{\iota})\in \mathcal{N}\left((\bm{u}_{\iota}^*,\lambda_{\iota}^*),\delta_{\iota}\right)$,  let us consider
\begin{equation}\label{eq_viq_24}
\left\{ {\begin{array}{rl}
\lim\limits_{i\rightarrow+\infty}\Big\{\mathcal{E}^{{\iota}}\big(\bm{u}_{\iota}\big)&-\mathcal{E}^{{\iota}}\big(\bm{u}_{\iota}^{i+1}\big)+\int_\Omega(\bm{u}_{\iota}-\bm{u}_{\iota}^{i+1})\left(\nabla\left(\lambda_{\iota}^{i+1}\right)+\frac{1}{\gamma}(\bm{u}_{\iota}^{i+1}-\bm{u}_{\iota}^{i})\right)d\bm{\omega}_{\iota}\Big\}\\
&=\mathcal{E}^{{\iota}}\big(\bm{u}_{\iota}\big)-\mathcal{E}^{{\iota}}\big(\overline{\bm{u}_{\iota}^*}\big)+\int_\Omega(\bm{u}_{\iota}-\overline{\bm{u}_{\iota}^*})\nabla\left(\overline{\lambda_{\iota}^*}\right)d\bm{\omega}_{\iota}\geq 0,\\
\lim\limits_{i\rightarrow+\infty}&\int_\Omega(\lambda_{\iota}-\lambda_{\iota}^{i+1})\left((\diver\bm{u}_{\iota}^{i+1}+\cfrac{\partial h(\bm{\omega}_{{\iota}},t)}{\partial t}\Big|_{t={\iota}})+\frac{1}{\beta}(\lambda_{\iota}^{i+1}-\lambda_{\iota}^{i})\right)d\bm{\omega}_{\iota}\\
&=\int_\Omega(\lambda_{\iota}-\overline{\lambda_{\iota}^*})\left(\diver\overline{\bm{u}_{\iota}^*}+\cfrac{\partial h(\bm{\omega}_{{\iota}},t)}{\partial t}\Big|_{t={\iota}}\right)d\bm{\omega}_{\iota}\geq 0.
  \end{array}} \right.
\end{equation}
which implies from Lemma \ref{constraint_opti_viq1} that $(\overline{\bm{u}_{\iota}^*}, \overline{\lambda_{\iota}^*})$ is the saddle point of the Lagrangian functional $\mathcal{L}^{\iota}\big (\bm{u},\lambda)$.
\end{proof}

\begin{remark}\label{rek02}
From Theorem \ref{constraint_opti_viq2} and the constraint optimization theory \cite{RGlowinski1989}, let $\mathcal{E}^{{\iota}}\big(\bm{u}_{\iota}\big)$ satisfy Assumption \ref{assump_1}, then we may observe the following:
\begin{description}
  \item[1.] $\overline{\bm{u}_{\iota}}^*:=\overline{\bm{u}^*(\bm{\omega}_{\iota})}$ is the solution of the constraint optimization (\ref{eq:eq15_subp}) with $\iota=t_k$;
  \item[2.] $\{\bm{\omega}_{t_k}:=\bm{\phi} (\bm{x},t_k)\}$ is a convergent sequence and converges to the solution $\bm{\varphi}^*:=\bm{\phi}^*(\bm{x},1)$ of the diffeomorphic image registration model (\ref{model1}) as $t_k\rightarrow 1$ ($k\rightarrow +\infty$).
\end{description}
\end{remark}

\subsection{Numerical implementation}
It can be seen from formula (\ref{eq:eq21}) that the main work of seeking the optimal solution of the model is to solve the subproblem 
\begin{equation}\label{u_subprob}
\min\limits_{\bm{u}(\bm{\omega}_{{\iota}})}\mathcal{E}_{\bm{u}^i,\lambda^i}(\bm{u}),
\end{equation}
so the following discussion will focus on minimizing $\mathcal{E}_{\bm{u}^i,\lambda^i}(\bm{u})$.  
To obtain the optimal solution of (\ref{u_subprob}), the first-order variation of $\mathcal{E}_{\bm{u}^i,\lambda^i}(\bm{u})$ can be used to deduce the Euler-Lagrange equation as follow (\emph{see} \cite{fischer2002fast} for more details)
\begin{equation}\label{eq:eq28}
\begin{split}
\alpha({{\iota}})&\big[T\big(\bm{\omega}_{{\iota}}+\alpha({{\iota}})\bm{u}(\bm{\omega}_{{\iota}})\big)- R\big(\bm{\phi}^{-1}(\bm{\omega}_{\iota})\big) \big] \nabla T\big(\bm{\omega}_{{\iota}}+\alpha({{\iota}})\bm{u}(\bm{\omega}_{{\iota}})\big)-\tau  \Delta {\bm{u}(\bm{\omega}_{{\iota}})}\\
&+{\beta}\left(\nabla \big(-\cfrac{\partial h(\bm{\omega}_{{\iota}},t)}{\partial t}\Big|_{t={\iota}}+ \frac{{{\lambda^i}(\bm{\omega}_{{\iota}})}}{\beta}\big)- \nabla\diver{\bm{u}(\bm{\omega}_{{\iota}})}\right)+\frac{1}{\gamma}(\bm{u}(\bm{\omega}_{{\iota}})-\bm{u}^i(\bm{\omega}_{{\iota}})) = 0
\end{split}
\end{equation}
with the boundary condition $\cfrac{\partial{\bm{u}}}{\partial{\bm{n}}}=\bm{0}  \text { on } \partial \Omega $.
Finally, combining (\ref{eq:eq28}) and the boundary condition in (\ref{eq_rot}), the solving nonlinear systems can be rewritten as
\begin{equation}\label{eq:eq29}
\left\{\begin{array}{rll}
- \Delta {\bm{u}}(\bm{\omega}_{{\iota}})- \frac{\beta}{\tau}\nabla\diver{\bm{u}(\bm{\omega}_{{\iota}})}+\frac{1}{\gamma}\bm{u}(\bm{\omega}_{{\iota}})=& {\bm{r}}\big(\bm{\omega}_{{\iota}}+\alpha({{\iota}})\bm{u}(\bm{\omega}_{{\iota}})\big),& \text{ in } \Omega_{\text{in}} \\
  {\bm{u}}(\bm{\omega}_{{\iota}}) =& \bm{0} \text{ and } \cfrac{\partial{\bm{u}(\bm{\omega}_{{\iota}})}}{\partial{\bm{n}}}= \bm{0}, & \text{ on } \partial \Omega,
\end{array}\right.
\end{equation}
where
\begin{equation*}
\begin{split}
{\bm{r}}\big(\bm{\omega}_{{\iota}}+\alpha({{\iota}})\bm{u}(\bm{\omega}_{{\iota}})\big):=&-\frac{\alpha({{\iota}})}{\tau}\Big[T\big(\bm{\omega}_{{\iota}}+\alpha({{\iota}})\bm{u}(\bm{\omega}_{{\iota}})\big) - R\big(\bm{\phi}^{-1}(\bm{\omega}_{\iota})\big) \Big]\times\\ \nabla T\big(\bm{\omega}_{{\iota}}&+\alpha({{\iota}})\bm{u}(\bm{\omega}_{{\iota}})\big)
-\frac{{\beta}}{\tau }\nabla\Big(-\cfrac{\partial h(\bm{\omega}_{{\iota}},t)}{\partial t}\Big|_{t={\iota}} + \frac{{{\lambda^i}(\bm{\omega}_{{\iota}})}}{{\beta} }\Big)+\frac{1}{\gamma}\bm{u}^i(\bm{\omega}_{{\iota}}).
\end{split}
\end{equation*}
Therefore, the minimum point ${\bm{u}(\bm{\omega}_{{\iota}})}$ of $\mathcal{E}_{\bm{u}^i,\lambda^i}(\bm{u})$ can be transformed into the solution of the nonlinear differential equations \eqref{eq:eq29}.

\subsubsection{Discrete setting} 
Before we present some numerical results from applying the diffemorphic registration model, let us first introduce the discrete setting which we will use in the rest of this paper. Without loss of generality, we restrict our attention to 2D in the following part. 

To find the finite difference scheme associated with the equation (\ref{eq:eq29}), we establish a uniform Cartesian mesh of size $m\times n$ with cell centers: $\{(x_i,y_j):=(ih_x,jh_y):i=1,\dots, m,\;j=1,\dots, n\}$, where $h_x=1/m$ and $h_y=1/n$ denote the spacing step length and $(i, j)$ denotes the index of the locations $(ih_x, jh_y)$ in image domain $\Omega:=[0, 1]^2$. The grid line intersections (or grid nodes) are labeled using fractional indices. For example, the grid node in the upper left corner of the cell centered at $(i,j)$ is labeled $(i-\frac{1}{2},j-\frac{1}{2})$. Fig.\ref{fig_grid2} illustrates the labeling scheme. Here for the time step ${\iota}=t_k\in[0,1]$, the boundary conditions $\bm{u}(\bm{\omega}_{{\iota}})|_{\partial\Omega}= \bm{0}$ and $\cfrac{\partial{\bm{u}}}{\partial{\bm{n}}}=\bm{0}$ of the control increment $\bm{u}$ are added, i.e.,
 \begin{equation*}\begin{split}
 u^l_{\frac{1}{2},j}=0,\quad u^l_{m+\frac{1}{2},j}=0,&\quad u^l_{i,\frac{1}{2}}=0,\quad u^l_{i,n+\frac{1}{2}}=0,\\
 u^l_{0,j}=u^l_{1,j},\quad u^l_{m,j}=u^l_{m+1,j}, & \quad u^l_{i,0}=u^l_{i,1},\quad u^l_{i,n}=u^l_{i,n+1},\\
 & \text{ for all } i=1,\dots,m,\quad j=1,\dots,n \text{ and }l=1,2.
 \end{split}\end{equation*}
Naturally, the simplest method is to consider separately the discretization of
each second-order derivative in $x$ and $y$, which is equivalent to using the one-dimensional approximation. By doing so,
 let us define the discrete schemes applied to a scalar function $v$ and a vectorial $\bm{u}=(u^1,u^2)$ at the grid point $(i, j)$ by
\[(\nabla v)_{ij}=(\delta_x v_{ij},\delta_y v_{ij}),\quad (\diver\bm{u})_{ij}=\delta_x u^1_{ij}+\delta_y u^2_{ij},\]
\[(\Delta u^l)_{ij}=\delta_{xx} u^l_{ij}+\delta_{yy} u^l_{ij},\quad (\Delta \bm{u})_{ij}=\left((\Delta u^1)_{ij},(\Delta u^2)_{ij}\right),\]
where
  \[\delta_x v_{i,j}=(v_{i+1,j}-v_{i-1,j})/(2h_x),\;\delta_y v_{i,j}=(v_{i,j+1}-v_{i,j-1})/(2h_y),\]
  \[\delta_{xx} u^l_{i,j} = (u^l_{i-1,j}-2u^l_{i,j}+u^l_{i+1,j})/h_x^2,\;\delta_{yy} u^l_{i,j} = (u^l_{i,j-1}-2u^l_{i,j}+u^l_{i,j+1})/h_y^2.\]  

As mentioned previously, the gradient $\nabla T$ is obtained from the first variation of $\text{SSD}(\bm{u})$. To discrete $\nabla T$ at cell centers around boundary $\partial\Omega$, it would be interesting to take into account the Neumann boundary condition
$\frac{\partial T(\bm{x})}{\partial \bm{n}}=0$ on $\partial \Omega $, i.e
 \begin{equation*}\begin{split}
 T_{0,j}=T_{1,j},\quad T_{m+1,j}=T_{m,j},&\quad T_{i,0}=T_{i,1},\quad T_{i,n+1}=T_{i,n},\\
 & \text{ for all } i=1,\dots,m,\quad j=1,\dots,n.
 \end{split}\end{equation*}
Then we have 
\[(\nabla T)_{ij}=(\delta_x T_{ij},\delta_y T_{ij}),\]
\[\delta_x T_{ij}=(T_{i+1,j}-T_{i-1,j})/(2h_x);\;\;\delta_y T_{ij}=(T_{i,j+1}-T_{i,j-1})/(2h_y).
\]
Here we employ bicubic interpolator, which takes more surrounding pixels into consideration, to obtain the most image information at the non-grid point from the weighted average of the most recent 16 sampling points in the rectangular grid.

To summarize, the approximation of (\ref{eq:eq29}) is then given by
 \begin{equation}\label{sub_eq_1}
 \begin{split}
 -(\Delta u^1)_{ij}-\frac{\beta}{\tau}\delta_x\big((\diver\bm{u})_{ij}\big)+\frac{1}{\gamma}u^1_{ij}=-&\frac{\alpha(t_k)}{\tau}(T^k-R)_{ij}\delta_x\big((T^k)_{ij}\big)\\+&\frac{\beta}{\tau}\delta_x\big(\big(h^{\prime}(\cdot,t_k)- \frac{{{\lambda(\cdot)}}}{{\beta} }\big)_{ij}\big)+\frac{1}{\gamma}\bar{u}^1_{ij},\\
 -(\Delta u^2)_{ij}-\frac{\beta}{\tau}\delta_y\big((\diver\bm{u})_{ij}\big)+\frac{1}{\gamma}u^2_{ij}=-&\frac{\alpha(t_k)}{\tau}(T^k-R)_{ij}\delta_y\big((T^k)_{ij}\big)\\+&\frac{\beta}{\tau}\delta_y\big(\big(h^{\prime}(\cdot,t_k)- \frac{{{\lambda(\cdot)}}}{{\beta} }\big)_{ij}\big)+\frac{1}{\gamma}\bar{u}^2_{ij},
 \end{split}
 \end{equation}
 where $T^k=T\big(\bm{\omega}_{t_k}+\alpha({t_k})\bm{u}(\bm{\omega}_{t_k})\big)$ and $h^{\prime}(\cdot,t_k)=\cfrac{\partial h(\bm{\omega}_{t_k},t)}{\partial t}\Big|_{t=t_k}$. Now we can consider the augmented Lagrangian multiplier update (\ref{eq:eq_multiplier_update}), which can be seen as the combination of the previous discretization. So the discrete scheme is
\begin{equation}\label{eq_multiplier_discrete}
(\lambda^{\ell+ 1}_k)_{ij} =(\lambda^\ell_k)_{ij} - {\beta}\big(\diver{{\bm{u}^{\ell+ 1}}} + \cfrac{\partial h(\bm{\omega}_{t_k},t)}{\partial t}\Big|_{t=t_k}\big)_{ij}.
\end{equation}

\subsubsection{Deformation quality indicator and folding correction}
As has been discussed, the folding of deformed mesh in diffeomorphic image registration, measured often by the local quantity ${\det (\nabla{\bm{\varphi}} (\bm{x}))}$, should be reduced or avoided. Especially in infinite-dimension case, ${\det (\nabla{\bm{\varphi}} (\bm{x}))} >0$ ensures that the deformation ${\bm{\varphi}} (\bm{x})$ is an one-to-one mapping.

\textbf{Grid unfolding indicator.} We are motivated by a finite volume approach of Haber and Modersitzki \cite{haber_ImageRegistration_2007,haber_NumericalMethods_2004} to discuss the Jacobian determinant value at the cell center $(i,j)$
in 2D. For a straightforward central-difference discretization, the unfolding quantity $\det \big(\nabla \bm{\varphi}\big)\big|_{O}$ of the deformation $O:=\bm{\varphi}_{ij}$ at cell center $(i,j)$ is given by
\begin{equation}\label{det_jacobian}
\begin{split}
\det \big(\nabla \bm{\varphi}\big)\big|_{O}&= \left| {\begin{array}{*{20}{c}}
  \delta_x \varphi^1_{ij}&\delta_y \varphi^1_{ij} \\
  \delta_x \varphi^2_{ij}&\delta_y \varphi^2_{ij}
  \end{array}} \right|=\frac{1}{4h_xh_y}\left| {\begin{array}{*{20}{c}}
  \varphi^1_{i+1,j}-\varphi^1_{i-1,j}&\varphi^1_{i,j+1}-\varphi^1_{i,j-1} \\
  \varphi^2_{i+1,j}-\varphi^2_{i-1,j}&\varphi^2_{i,j+1}-\varphi^2_{i,j-1}
  \end{array}} \right|\\
  =&\frac{1}{4h_xh_y}\Bigg(\left| {\begin{array}{*{20}{c}}
  \varphi^1_{i+1,j}-\varphi^1_{ij}&\varphi^1_{i,j+1}-\varphi^1_{ij} \\
  \varphi^2_{i+1,j}-\varphi^2_{ij}&\varphi^2_{i,j+1}-\varphi^2_{ij}
  \end{array}} \right|+\left| {\begin{array}{*{20}{c}}
  \varphi^1_{i,j+1}-\varphi^1_{ij}&\varphi^1_{i-1,j}-\varphi^1_{ij} \\
  \varphi^2_{i,j+1}-\varphi^2_{ij}&\varphi^2_{i-1,j}-\varphi^2_{ij}
  \end{array}} \right|\\
&\qquad+\left| {\begin{array}{*{20}{c}}
  \varphi^1_{i-1,j}-\varphi^1_{ij}&\varphi^1_{i,j-1}-\varphi^1_{ij} \\
  \varphi^2_{i-1,j}-\varphi^2_{ij}&\varphi^2_{i,j-1}-\varphi^2_{ij}
  \end{array}} \right|+\left| {\begin{array}{*{20}{c}}
  \varphi^1_{i,j-1}-\varphi^1_{ij}&\varphi^1_{i+1,j}-\varphi^1_{ij} \\
  \varphi^2_{i,j-1}-\varphi^2_{ij}&\varphi^2_{i+1,j}-\varphi^2_{ij}
  \end{array}} \right|
  \Bigg)\\
  =&\frac{1}{2}(R^{ij}_{\Delta OBD}+R^{ij}_{\Delta ODA}+R^{ij}_{\Delta OAC}+R^{ij}_{\Delta OCB}),
  \end{split}
  \end{equation}
  where five adjacent cell centers ($O, A, B, C$ and $D$ in Fig.\ref{fig_grid2}(a)) are involved, and $R^{ij}_{\Delta OBD}$ is an area ratio of the triangle signed area $\frac{1}{2}\left| {\begin{array}{*{20}{c}}
  \varphi^1_{i+1,j}-\varphi^1_{ij}&\varphi^1_{i,j+1}-\varphi^1_{ij} \\
  \varphi^2_{i+1,j}-\varphi^2_{ij}&\varphi^2_{i,j+1}-\varphi^2_{ij}
  \end{array}} \right|$ to the area element $h_xh_y$.
 However, if we allow the deformation $\bm{\varphi}$
to shrink or enlarge area in a certain amount,
the deformation is allowed to be much more irregular
and the displacement regularity is not guaranteed even if the above value $\det \big(\nabla \bm{\varphi}\big)\big|_{O}>0$.
An intuitive example is illustrated in Fig.\ref{fig_grid_correction}(a), where the movement of the right point $D$ results in a “twist” of the
box, but this twist can not be observed by computing the quantity $\det \big(\nabla \bm{\varphi}\big)\big|_{O}=\frac{1}{2}(|R^{ij}_{\Delta OBD}|-|R^{ij}_{\Delta ODA}|+|R^{ij}_{\Delta OAC}|+|R^{ij}_{\Delta OCB}|)$. Since the ratio $R^{ij}_{\Delta ODA}$ computed by the triangle $\Delta ODA$ is negative and each one of the other ratios is positive. Especially when point $D$
 is close to $A$ in Fig.\ref{fig_grid_correction}(a), the quantity $\det \big(\nabla \bm{\varphi}\big)\big|_{O}$ may
even be large although a twist has occurred. 

\begin{figure}
\begin{center}
    \includegraphics[width=0.95\textwidth]{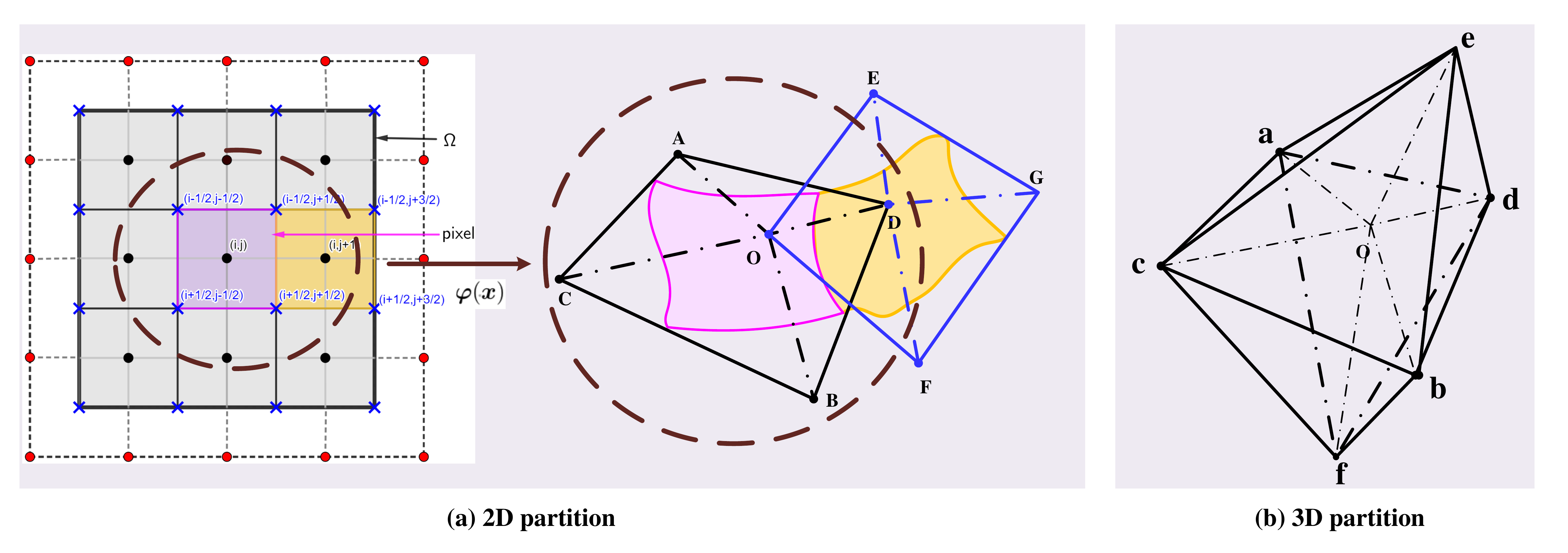}     
\end{center}
\caption{Cartesian grid, deformation $\bm{\varphi}(\bm{x})$ and triangle partition for deformation quality evaluations in 2D and 3D case, and overlapping correction, where five adjacent cell centers in 2D are denoted by $A:=\bm{\varphi}_{i-1,j}=(\varphi^1_{i-1,j},\varphi^2_{i-1,j})$, $B:=\bm{\varphi}_{i+1,j}=(\varphi^1_{i+1,j},\varphi^2_{i+1,j})$, $C:=\bm{\varphi}_{i,j-1}=(\varphi^1_{i,j-1},\varphi^2_{i,j-1})$, $D:=\bm{\varphi}_{i,j+1}=(\varphi^1_{i,j+1},\varphi^2_{i,j+1})$ and $O:=\bm{\varphi}_{ij}=(\varphi^1_{ij},\varphi^2_{ij})$.
}\label{fig_grid2}
\end{figure}

To prevent twists and singular points, Haber and
Modersitzki \cite{haber_NumericalMethods_2004} demanded that the area of every deformed box is preserved for vertex grid discretization. However, a cell center grid (\emph{see} the black points in Fig.\ref{fig_grid2}(a)) and the central-difference are used in this work, we therefore propose a signed area minimum ratio indicator that
can detect twists of the grid, i.e.,
\begin{equation}
R_O=R_{ij}:=\min(R^{ij}_{\Delta OBD},R^{ij}_{\Delta ODA},R^{ij}_{\Delta OAC},R^{ij}_{\Delta OCB}).
\end{equation}
Based on the previous considerations, a triangle cannot twist without its measure $R_{ij}$
to change sign, so we consider our grid quality
evaluation on a triangulation. If a triangle folds, its $R_{ij}$ becomes negative. Such an evaluation is consistent even
in cases of large deformations.

Similar to 2D case, we then also discuss the extension to 3D. The deformation $\bm{\varphi}$ is
discretized at the center of each voxel. Once again, a
Jacobian determinant $\det \big(\nabla \bm{\varphi}\big)\big|_{o}$ of the deformation $o:=\bm{\varphi}_{ijk}$ based on the deformed center $(i,j,k)$ cannot detect
twists. Therefore, every octahedron with six corners $a, b, c, d, e, f$ and one center $o$ is divided into eight
tetrahedrons; cf. Fig.\ref{fig_grid2}(b). The Jacobian determinant with respect to $(i,j,k)$ can be defined by
\begin{equation}
\begin{split}
\det &\big(\nabla \bm{\varphi}\big)\big|_{o}= \left| {\begin{array}{*{20}{c}}
  \delta_x \varphi^1_{ijk}&\delta_y \varphi^1_{ijk}&\delta_z \varphi^1_{ijk} \\
  \delta_x \varphi^2_{ijk}&\delta_y \varphi^2_{ijk}&\delta_z \varphi^2_{ijk}\\
  \delta_x \varphi^3_{ijk}&\delta_y \varphi^3_{ijk}&\delta_z \varphi^3_{ijk}
  \end{array}} \right|\\=&\frac{1}{8h_xh_yh_z}\left| {\begin{array}{*{20}{c}}
  \varphi^1_{i+1,j,k}-\varphi^1_{i-1,j,k}&\varphi^1_{i,j+1,k}-\varphi^1_{i,j-1,k} &\varphi^1_{i,j,k+1}-\varphi^1_{i,j,k-1} \\
  \varphi^2_{i+1,j,k}-\varphi^2_{i-1,j,k}&\varphi^2_{i,j+1,k}-\varphi^2_{i,j-1,k} &\varphi^2_{i,j,k+1}-\varphi^2_{i,j,k-1} \\
  \varphi^2_{i+1,j,k}-\varphi^2_{i-1,j,k}&\varphi^2_{i,j+1,k}-\varphi^2_{i,j-1,k} &\varphi^2_{i,j,k+1}-\varphi^2_{i,j,k-1} 
  \end{array}} \right|\\
  =&\frac{1}{8h_xh_yh_z}\sum\limits_{i',j',k'\in\{-1,+1\}}\left| {\begin{array}{*{20}{c}}
  \varphi^1_{i+i',j,k}-\varphi^1_{i,j,k}&\varphi^1_{i,j+j',k}-\varphi^1_{i,j,k} &\varphi^1_{i,j,k+k'}-\varphi^1_{i,j,k} \\
  \varphi^2_{i+i',j,k}-\varphi^2_{i,j,k}&\varphi^2_{i,j+j',k}-\varphi^2_{i,j,k} &\varphi^2_{i,j,k+k'}-\varphi^2_{i,j,k} \\
  \varphi^2_{i+i',j,k}-\varphi^2_{i,j,k}&\varphi^2_{i,j+j',k}-\varphi^2_{i,j,k} &\varphi^2_{i,j,k+k'}-\varphi^2_{i,j,k} 
  \end{array}} \right|\\
  =&\frac{6}{8h_xh_yh_z}(\mathcal{V}_{odea}+\mathcal{V}_{oaec}+\mathcal{V}_{oceb}+\mathcal{V}_{obed}+\mathcal{V}_{ocfa}+\mathcal{V}_{obfc}+\mathcal{V}_{odfb}+\mathcal{V}_{oafd}),
  \end{split}
\end{equation}
where $\mathcal{V}_{odea}$ of the above formulas is the signed volume with the tetrahedron $\mathcal{V}(o,d,e,a)$.

\begin{figure}[ht]
\begin{center}
    \includegraphics[width=0.95\textwidth]{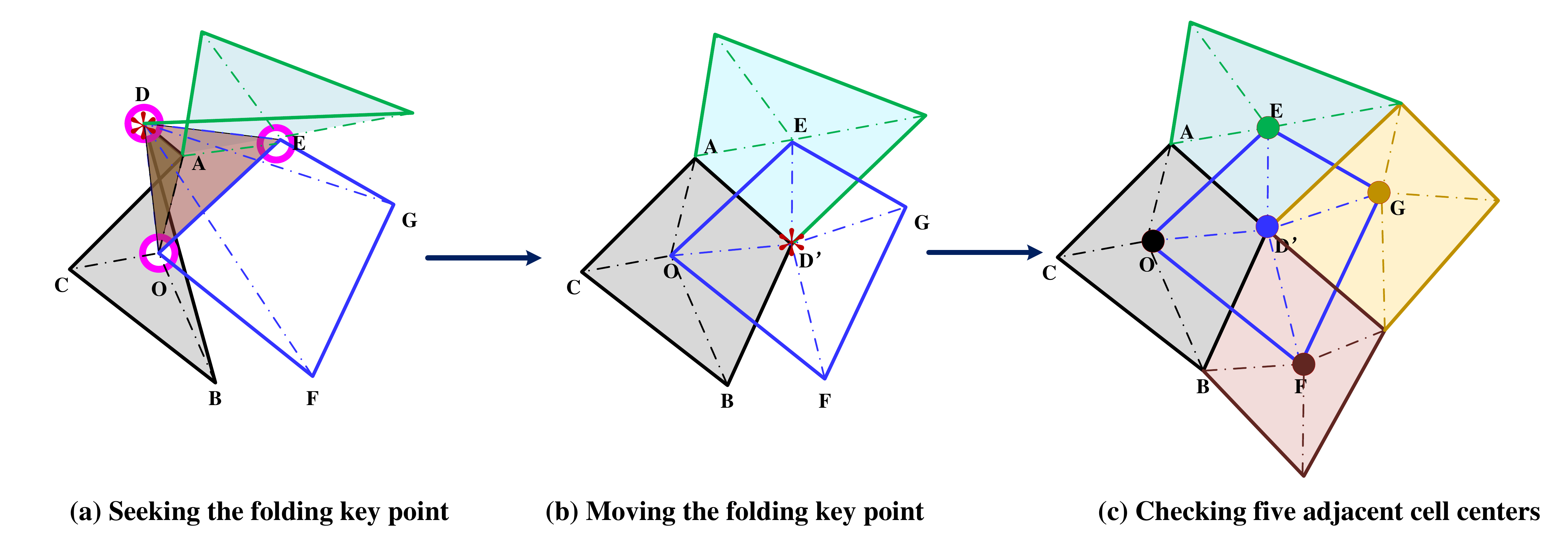}     
\end{center}
\caption{Grid folding correction.
}\label{fig_grid_correction}
\end{figure}

\textbf{Grid folding correction.} Triangulation is widely used in geometric computing, also in image processing community, the unfolding of a single triangle element is the favorable geometric properties of the triangulation that make it so useful. The fundamental property is that a triangle deformation cannot twist without their areas
to change sign, such deformation grid $\bm{\varphi}$ is regular even
in cases of large deformations.

However, while the area of a triangle changes sign, the corresponding grid unfolding indicator $R_{ij}$ is negative, and the grid degeneracy arises. To detect the grid degeneracy, we first collect such point $(i,j)$ to set $S:=\{ (i,j)~|~R_{ij}<0\}$ (\emph{see} three points marked by the magenta circles in Fig.\ref{fig_grid_correction}(a)). Further, we define the folding degree of a point $P$ by
\[\textbf{Deg}_P= \text{the triangle partition number with a vertex } P \text{ and the negative area ratio}.\]
to seek the folding key point. Using the above definition, we have that $\textbf{Deg}_O=2$, $\textbf{Deg}_E=3$ and $\textbf{Deg}_D=4$. Hence the point $D$ with large folding degree is selected as a folding key point (\emph{see} Fig.\ref{fig_grid_correction}(a)).

To correct the grid point $D$ resulting in “twist” of the
boxes, the folding key point $D$ is moved to $D^\prime$, this movement should ensure that the grid unfolding indicators ($R_O$, $R_E$, $R_G$, $R_F$ and $R_{D^\prime}$) of five adjacent cell centers ($O, E, G, F$ and $D^\prime$ respectively) are positive. We illustrate this process in Fig.\ref{fig_grid_correction}. The deformation correction algorithm for using the above technique and backtrack strategy is given in Algorithm \ref{alg:MA2}.

\begin{algorithm}[h]
  \caption{Deformation correction}\label{alg:MA2}
  \KwIn{ $\bm{\varphi}$, $\Delta t$, $\bm{v}$, $\rho$, $ \epsilon = 1\times 10^{-2}$, $m$ and $n$;}
  Compute $ \{ R_1, R_2, R_3, R_4 \}:= \{R_{\triangle OBD}, R_{\triangle ODA}, R_{\triangle OAC}, R_{\triangle OCB}\}$ and $ R_{ij}:= \min \{R_1(i,j),R_2(i,j),R_3(i,j),R_4(i,j)\} $\;
  Find $ S:=\{ (i,j)~|~R_{ij}< \epsilon\} $ and the folding point set $P$ causing the overlaps
   $P:=\{P_{ij}\in \{(i,j),(i-1,j),(i,j-1),(i+1,j),(i,j+1)|(i,j)\in S\}\}$ \;
  \uIf{$|P| = \varnothing$}{
    $\Delta t=2\Delta t$, \textbf{break}\;
  }
  \uElseIf{$|P|/n/m>\rho$}{
$\Delta t=\Delta t/2$, $ \bm{\varphi} = \textbf{RK4}(\bm{\varphi}_0, \bm{v},\Delta t) $, and go to \textbf{line 1}\;
  }
  \Else{
  \ForEach{$(i,j)\in P$}
  {optimize $\bm{\varphi}_{ij}$ such that $\min\{R_{ij},R_{i-1,j},R_{i,j-1},R_{i+1,j},R_{i,j+1}\}\geq\epsilon$
    }
  } 
  \KwOut{ $ \bm{\varphi} $, $\Delta t$; }
\end{algorithm}
\subsubsection{Algorithm}
Based on the above discussion, the algorithm based on ALMM solving the proposed diffeomorphic registration model with an optimal control relaxation (ALMM-based OCRDIR) and the deformation correction can be summarized in Algorithm \ref{alg:ALMM_OCRDI}.

\begin{algorithm}[h]
  \caption{A robust ALMM-based OCRDIR algorithm for using the deformation correction and backtrack
strategy.}\label{alg:ALMM_OCRDI}
  \KwIn{ $\mathcal{R}, \mathcal{T}, \tau, \beta,\gamma, N, \text{MaxIter}, tol$;}
  Initial: $\bm{\omega}_0=\bm{x}, g_{_0}(\bm{x})=1, \bm{u}_0^0 =\bm{0}, \lambda _0^0=1, t_0=0, k=0, \Delta t_0 = 1/ N$, $\text{flag}=0$, $\sigma_\epsilon = 0.01 $\;
  \While{$t_k\leq1$ and $\text{\emph{flag}}\leq 1$}{
    Compute: $g_k := g(\bm{\omega}_{k})\approx\frac{1}{2\pi\sigma_\epsilon^2}\int_\Omega g(\bm{y})\exp\left(-\frac{\|\bm{y}-\bm{\omega}_{k}\|^2}{2\sigma_\epsilon^2}\right)d\bm{y}$\;
    Compute: $ \alpha_k:=\alpha({k})=\frac{\Delta {t_k}}{h(\bm{\omega}_{k},t_k)}$\;
    \For{$\ell =0,\cdots, \emph{MaxIter}$}
    {
      Update $\bm{u}^{\ell+ 1}_{k}$ by solving the control increment equation \eqref{sub_eq_1}\;      
      Update $\lambda^{\ell+ 1}_{k}$ by using the scheme \eqref{eq_multiplier_discrete}\;
     Update $ \mathcal{T}(\bm{\omega}_k + \alpha_k{\bm{u}^{\ell+1}_k}) $\;
      \If{$\frac{\| {\bm{u}^{\ell+1}_k} - {\bm{u}^{\ell}_k}\|_2}{ \| {\bm{u}^{\ell+1}_k} - {\bm{u}^{0}_k}\|_2 } < tol$}{
        \textbf{break}\;
      }
    } 
      Deformation prediction: $ \bm{\omega}_{k+1} = \textbf{RK4}(\bm{\omega}_k, \bm{v}_k,\Delta t_k) $\;
      Deformation correction: $ \bm{\omega}_{k+1} $ and $\Delta t_k$ by using Algorithm \ref{alg:MA2}\;
    Compute $ R_{\min}^{k} $, $ {\det_{\min}^{k}} $, $ {\det_{\max}^{k}} $\;
    $t_{k+1} =t_{k}+\Delta t_k$\;
    \If{$t_{k+1}\geq1$}{
    $t_{k+1}=1$, $\text{flag}=\text{flag}+1$\;
    }
    $ \bm{u}_{k+1}^0 =  \bm{u}_{k}^{\ell + 1}$\;
    $k=k+1$\;
  }
  \KwOut{ $\bm{\omega}_{k+1}$ and $ \mathcal{T}(\bm{\omega}_{k+1}) $; }
\end{algorithm}
\section{Experimental results}\label{sec:Section5} 
In this section, through numerical experiments, we aim to
\begin{enumerate}
  \item[\textbf{i.)}] select an appropriate composite function $h(\bm{\phi} (\bm{x},t),t)$ of the proposed OCRDIR model;  
  \item[\textbf{ii.)}] compare with the demons-type models to conclude that the new model has better performance; 
  \item[\textbf{iii.)}] compare with the other popular diffeomorphic models to show the diffeomorphism and accuracy of our OCRDIR model for large deformed images.
\end{enumerate}

To assess the performance of our OCRDIR method, we compare
\begin{itemize} 
\item[$\bm{-}$] the qualitative evaluations including the registered images, the errors between the registered image $T(\bm{\varphi}(\bm{x}))$ and \emph{reference} image $R(\bm{x})$, the deformation grids, the displacement fields and the Jacobian determinant hotmaps of the displacement fields.
\item[$\bm{-}$] the quantitative evaluations of the deformation $\bm{\varphi}$ including
\begin{itemize} 
\item[\textbf{a.)}] Grid unfolding indicator defined by $R_{\min}=\min\limits_{i,j} R_{ij}$, $R_{\min}>0$ (denoted by "+") indicates geometrically
that the grid folding is not detected, while if $R_{\min}\leq 0$ (denoted by "-") means singularity at $\bm{x}$, the grid folding has occurred;
\item[\textbf{b.)}] Jacobian determinant measures defined by
\begin{equation*}\label{eq:eq30}
\begin{array}{rl}
\det \big(\nabla \bm{\varphi}\big)\big|_{ij},&\; \overline{\det}(J(\bm{\varphi}))=\cfrac{1}{mn} \sum\limits_{i,j}\det \big(\nabla \bm{\varphi}\big)\big|_{ij},\\
  \det_\text{min}(J(\bm{\varphi}))=\min\limits_{i,j}\det\big(\nabla \bm{\varphi}\big)\big|_{ij},&\;
  \det_\text{max}(J(\bm{\varphi}))=\max\limits_{i,j}\det\big(\nabla \bm{\varphi}\big)\big|_{ij}.   
\end{array}
\end{equation*}
When the grid unfolding indicator $R_{\min}>0$, $\det \big(J(\bm{\varphi})\big)=1$ indicates geometrically that no area (volume) change has occurred, and $\det \big(J(\bm{\varphi})\big)>1$ implies area (volume) growth, and  $0<\det \big(J(\bm{\varphi})\big)<1$ indicates shrinkage.
\item[\textbf{c.)}] the $\rm{Re_{-}SSD}$ (the relative Sum of Squared Differences) defined by
\begin{equation*}
\text{Re\_SSD}(T,R,T(\bm{\varphi}))= \cfrac{\int_\Omega(T(\bm{\varphi}(\bm{x}))-R(\bm{x}))^2\;d\bm{x}}{\int_\Omega(T(\bm{x})-R(\bm{x}))^2\;d\bm{x}}.
\end{equation*}
\item[\textbf{d.)}] the $\emph{ssim}$  (Structural Similarity) defined by 
\begin{equation*}
\emph{ssim}(R, T)   = \frac{\left(2 \mu_{_{R}} \mu_{_{T}}+c_{1}\right)\left(2 \sigma_{_{R}{_{T}}}+c_{2}\right)}{\left(\mu_{_{R}}^{2}+\mu_{_{T}}^{2}+c_{1}\right)\left(\sigma_{_{R}}^{2}+\sigma_{_{T}}^{2}+c_{2}\right)},
\end{equation*}
which is used to measure luminance, contrast  and structure between image $R$ and image $T$. Where $ \mu_{_{R}} $ and $ \mu_{_{T}} $  are the mean of $R$ and $T$,  $ \sigma_{_{R}{_{T}}} $ is the covariance of $R$ and $T$, $ \sigma_{_{R}}^{2} $ and $ \sigma_{_{T}}^{2} $  are the variance of $R$ and $T$, $ c_{1} $ and $ c_{2} $ are constants.  $\emph{ssim}$ is  between $0$ and $1$, when the two images are exactly the same, $\emph{ssim}=1$.
\end{itemize} 
\end{itemize}

It should be noted that a good registration should not only generate a small $\rm{Re_{-}SSD}$, but also ensure that the grid unfolding indicator $R_{\min}>0$. In an ideal case, we should focus on seeking an approximation area-preserving deformations ($\overline{\det}\approx 1$), which can be restricted by the constraint condition.

\subsection{Comparison for parameter choices}\label{sec:Section6} 
The purpose of this part is to show how sensitive our approach is with respect to regularization parameter $\tau$, also penalty parameters $\gamma$ and $\beta$. On one hand, a regularization parameter $\tau$ balancing the trade-off between a good SSD metric and a smooth solution is difficult to be fixed: if
the value $\tau$ is too large, then it is a poor deformation $\bm{\varphi}(\bm{x})$ for matching between $R(\bm{x})$ and $T(\bm{\varphi}(\bm{x}))$, while if too small, the corresponding $\bm{\varphi}(\bm{x})$ is not one to one. On the other hand, some experiences on image inverse applications have shown that, if the fixed penalty $\gamma$ or $\beta$ is chosen too small or too large, the solution is worse and the solution time can increase significantly.

We first study how our diffeomorphic image registration model is affected
when varying $\tau$. To this end, Algorithm \ref{alg:ALMM_OCRDI}
was tested
for a synthetic \emph{Circle-square} image with the results shown in Tab.\ref{table-parameter1}, where a pair of $256\times 256$ synthetic \emph{circle} images is needed to register into a synthetic \emph{square} image. The regularization parameter $\tau$ is varied from $1$ to $10$ in case of fixing $\beta=0.01$ and $\gamma=0.01$. The selection of suitable $\tau$ is important because it is unknown a priori and it significantly
affects on the qualities of registered images as well as the algorithm performance \cite{JPZhang2015}. However, for the range of  tested in Tab.\ref{table-parameter1},
the proposed model still obtains the satisfactory solution in a reasonable range of values $\tau$, so
for this example, the accurate selection of $\tau$ is
not needed as any $\tau$ between $2$ and $10$ can give better results, and is reasonable and recommendable. 

Next we test how our algorithm with the penalty parameter is affected for the above example when varying
the values of $\beta$ and $\gamma$; $\beta$ is varied from $0.001$ to $0.1$ and $\gamma$ is varied from $0.01$ to $q$, Tab.\ref{table-parameter1} shows that our model solves the image registration problem. As already discussed previously, the larger $\beta$ will lead to poor matching between $R$ and $T(\bm{\varphi})$, while the smaller $\gamma$ will make solution $\bm{\varphi}$ more worse.

\begin{table}[t]
  \centering
  \newsavebox{\mybox}
  \begin{lrbox}{\mybox}   
    \begin{tabular}{ccccccc}
      \toprule[1.5pt]
      \textbf{Parameter}  &   &  $\rm{Re_{-}SSD}$  & $R_{min}$& ${\det}_{\min}(J(\bm{\varphi}))$&  $\det_{\max}(J(\bm{\varphi}))$ &  $\overline{\det}(J(\bm{\varphi}))$ \\
      \hline
      \multirow{5}{*}{\makecell[c]{$\beta=0.01$\\$\gamma=0.01$\\$\tau$}}  &  0.5   &  0.1128\% &  +  &  0.42  & 2.11 &1.001  \\
      &1      &   0.0141\% &  + & 0.41   &  2.17  &  1.001\\
      & 5 &   0.0086\% &  +  &  0.40  &  2.16 &  1.001  \\
      & 10 &   0.0204\%&  +  & 0.38  &   2.75   & 1.001 \\
      & 20 &  0.0816\%&    +&  0.40     &  2.13   & 1.001 \\
      \hline
      \multirow{5}{*}{\makecell[c]{$\tau=5$\\$\gamma=0.01$\\$\beta$ }}  &  0.001   &   0.0086\% &  + &  0.40  & 2.16 &1.001  \\
      &0.01      &0.0100\%    &  +    &  0.41  &  2.12  & 1.001 \\
      & 0.1 &  0.1035\%  & +   &  0.60   & 2.18   &  1.001  \\
      & 1 & 1.5663\% &  +  &   0.15  &  2.63  & 1.001\\
      & 2 & 2.5158\% &   + &    0.15&   1.63    &  1.001\\
      \hline      
            \multirow{5}{*}{\makecell[c]{$\tau=5$\\$\beta=0.01$\\$\gamma$}}  & 0.001   &  0.0705\%  &  +  &    0.32&  2.17& 1.001 \\
      &0.01      &   0.0086\% &  +  &  0.40  &  2.16  &  1.001\\ 
      & 0.1 &  0.0184\%  &  +   &  0.45   & 2.16    &  1.001 \\
      & 1& 0.0221\%  &  +   &  0.51   & 2.17    &  1.001 \\
      & 10 &  0.0402\% & +  &   0.52  &  2.16 &   1.001\\
      \hline      
      \toprule[1.5pt]
    \end{tabular}
  \end{lrbox}
\caption{Quantitative assessment of different modeling parameters.}
  \scalebox{0.80}{\usebox{\mybox}}\label{table-parameter1} 
\end{table}

\begin{figure}
    \begin{center}
    \includegraphics[width=0.99\textwidth]{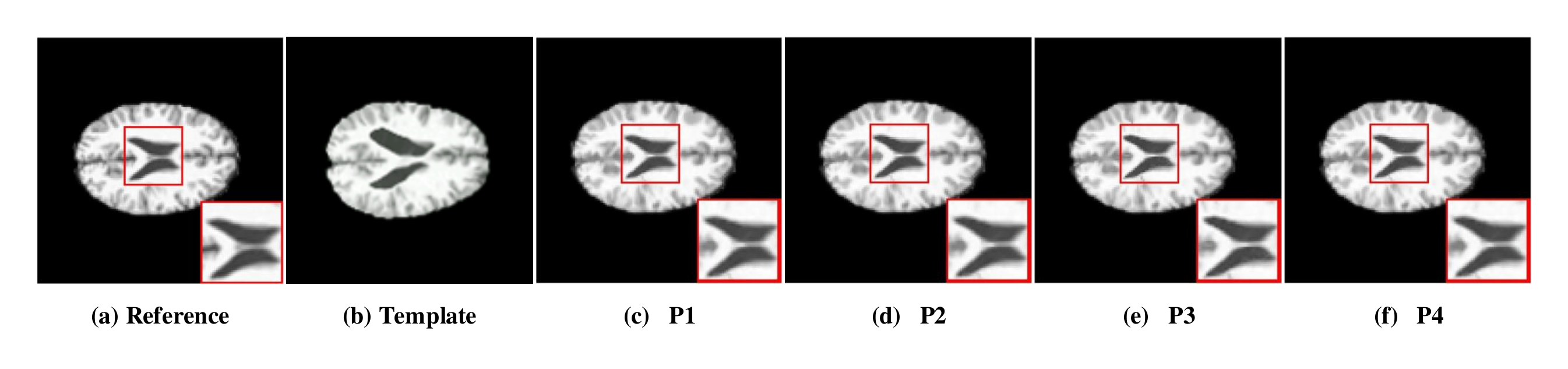}      
  \end{center}
    \caption{Comparison for different composite functions using the \emph{Whole-Brain} images. From (a) to (f): the reference image with a zoom-in region, template image, four solutions with different composite functions.
}\label{fig_test1_composite_function}
\end{figure}

\subsection{Comparison of different homotopy composite functions}

In this subsection, we briefly give the discussions on the performances of our OCRDIR model by employing
different composite functions  $\text{P}_{k}$ ($k=1,2,3,4$). Here the tested \emph{Whole-Brain} images are of size $128\times 128$ and the red boxes denote zoom-in regions, we only explain how the proposed model performs when a composite function is used,
and see more details about the composite functions in Section \ref{Sect_choice}. 

To make the comparison more fair, we set parameters that make the registration results optimal for different composite functions ($\tau=5$, $\beta=0.01$, $\gamma=0.01$ for $\mathrm{P}_{1}$ and $\mathrm{P}_{2}$, $\tau=5$, $\beta=0.1$, $\gamma=0.01$ for $\mathrm{P}_{3}$, $\tau=3$, $\beta=0.1$, $\gamma=0.01$ for $\mathrm{P}_{4}$). The solutions by using different composite functions based on OCRDIR algorithm are shown in Fig.\ref{fig_test1_composite_function} with the \emph{reference} image and \emph{template} image shown in Fig.\ref{fig_test1_composite_function} (a) and Fig.\ref{fig_test1_composite_function} (b), four registered
images from $T$ to $R$ are presented in Fig.\ref{fig_test1_composite_function} (c)-(f) for taking the time-step length $\Delta t=1/N$ ($N=40$), respectively. We can see that all four models can produce visually satisfactory results.

\begin{table}[h]
  \centering
  \begin{lrbox}{\mybox}          
    \begin{tabular}{cccccccc}
      \toprule[1.2pt]
      \textbf{CF}& \textbf{$\overline{\det}(J(\bm{\varphi}))$}&$ R_{min} $ & $\det_\text{min}(J(\bm{\varphi}))$ & $\det_\text{max}(J(\bm{\varphi}))$    & $\textbf{\emph{ssim}}$  & $\rm{Re_{-}SSD}$ &$ \textbf{\emph{psnr}} $\\
      \hline
      \multirow{1}{*}{$\textbf{P}_{1}$}
      & 1.000  & +  & 0.13  & 3.17     &0.9900   &2.06\%  &27.94\\
      \hline
      \multirow{1}{*}{$\textbf{P}_{2}$}
      &  1.000  & +   & 0.15  & 3.17    &0.9900   &2.08\%  &27.94\\ 
      \hline
      \multirow{1}{*}{$\textbf{P}_{3}$}
      & 1.001  & +  & 0.16  & 3.92    &0.9865   &2.62\%  &26.70\\
      \hline
      \multirow{1}{*}{$\textbf{P}_{4}$}
      & 1.000  & +  & 0.16 & 3.18   &0.9897   &2.07\%  &27.94\\ 
      \toprule[1.2pt]
    \end{tabular}
  \end{lrbox}
  \caption{Quantitative assessment of different composite functions for the proposed OCRDIR model, where 'CF' means to use different homotopy composite function.} 
  \scalebox{1}{\usebox{\mybox}}\label{table1}  
\end{table}

In Tab.\ref{table1}, the grid unfolding indicator $R_{\min}>0$ shows that the four composite functions can ensure the diffeomorphism of the deformation. Further we can also see that all $\overline{\det}$ values for the four composite functions approach to 1, which indicates that the deformations are area preserving for 2D case.  The Jacobian determinants of the four composite functions all belong to a small interval $[\det_{\text{min}},\det_{\text{max}}]$, which means that the degree of area change of deformed grid is considerably small, because the new registration model explicitly controls and penalizes area change by Jacobian equation. Furthermore, $\mathrm{P}_{3}$ in this example is not as good as $\mathrm{P}_{1}$, $\mathrm{P}_{2}$ and  $\mathrm{P}_{4}$ because it gives worse \textit{ssim}, $\rm{Re_{-}SSD}$ and \textit{psnr}, which means the registered images obtained by the composite function have significant registration errors between the \emph{reference} and \emph{template} images.
So we recommend the composite function $\mathrm{P}_{1}$ as the used composite function in our model and in the following numerical experiments. 

\begin{figure}
    \begin{center}
    \includegraphics[width=0.98\textwidth]{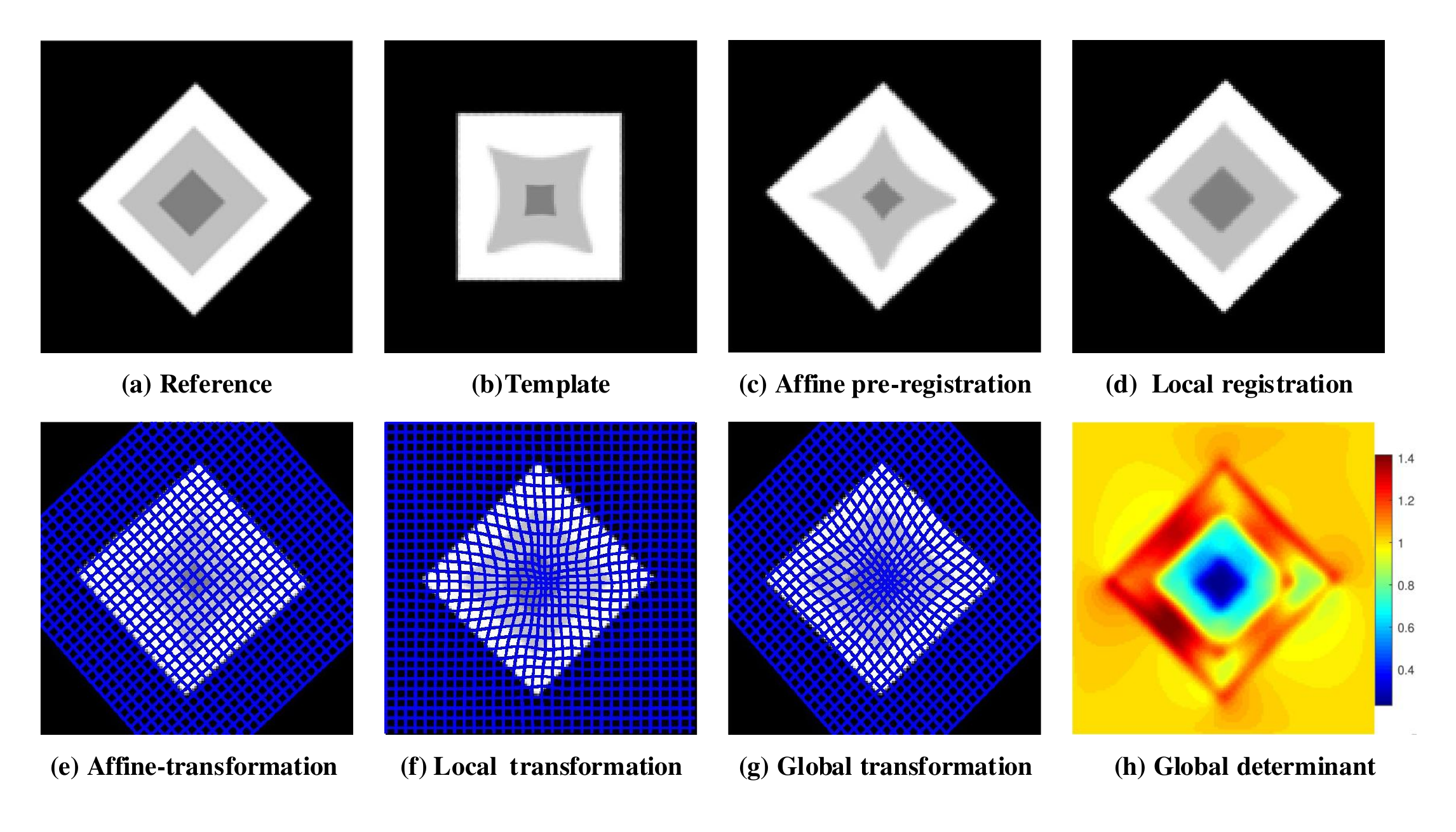}     
  \end{center}
  \caption{Registration with affine preprocessing.
  (a) the reference image; (b) the template image; (c) the pre-registration results of the affine model; (d) the proposed local registration from the template (c); (e) the affine large deformation grid; (f) the proposed local deformation grid; (g) the global deformation grid from the template (b) to the reference (a); (h) the Jacobian determinant hotmaps of global deformation $\bm{\varphi}$.}\label{fig:ALMMaffine}
\end{figure}

\subsection{Registration with affine preprocessing} Deformable registration is applicable to a large class of non-rigid registration problems, while it can suffer from worse mesh distortion for solving the large rotate deformation problems if the boundary of transforming grid is fixed, e.g., Dirichlet boundary condition $\bm{u}=\bm{0}$ on $\partial\Omega$. Affine image registration is one of the commonly-used rigid parametric models. Such affine method is always many orders of magnitude faster than a nonlinear variational method due to much less unknowns involved, which is widely used as a pre-registration step for sophistical non-rigid registration methods, such as elastic, fluid, and diffusion registration, by providing the good initial positions for the image to be registered. 

\begin{table}
  \centering
  \setlength{\tabcolsep}{2.8pt}
  \renewcommand\arraystretch{1.3}
  \begin{lrbox}{\mybox} 
    \begin{tabular}{cccccccc}
      \toprule[1.5pt]
      $\overline{\det}(J(\bm{\varphi}))$&$ R_{min} $& $\det_{\min}(J(\bm{\varphi}))$ & $\det_{\max}(J(\bm{\varphi}))$ & $\textbf{\emph{ssim}}$  & $\rm{Re_{-}SSD}$ &\textbf{\emph{psnr}}&runtime (s)\\
      \hline
      0.9998  &+ & 0.21  &  1.44  & 0.9974 & 0.28\%  & 39.54& 15.17\\
      \toprule[1.5pt]
    \end{tabular}
  \end{lrbox}
  \caption{Registration with affine preprocessing.}
  \scalebox{0.86}{\usebox{\mybox}}\label{table-affine} 
\end{table}

To show the quantitative gain from using two registration steps: pre-registration step and non-rigid registration, we now present the
comparable results in Tab.\ref{table-affine} for clarity, where the grid unfolding indicator $R_{\min}>0$ and $ {\det}(J(\bm{\varphi}))\in [0.21, 1.44]$ indicate that the registration keeps the diffeomorphism. It is worth noting that the $\rm{Re_{-}SSD}$ and \emph{psnr} are 0.28\% and 39.54 respectively, and the \emph{ssim} is as high as $0.9974$. We also show the respective registration visualizations with affine preprocessing in Fig.\ref{fig:ALMMaffine}.

\begin{figure}
    \begin{center}
    \includegraphics[width=0.92\textwidth]{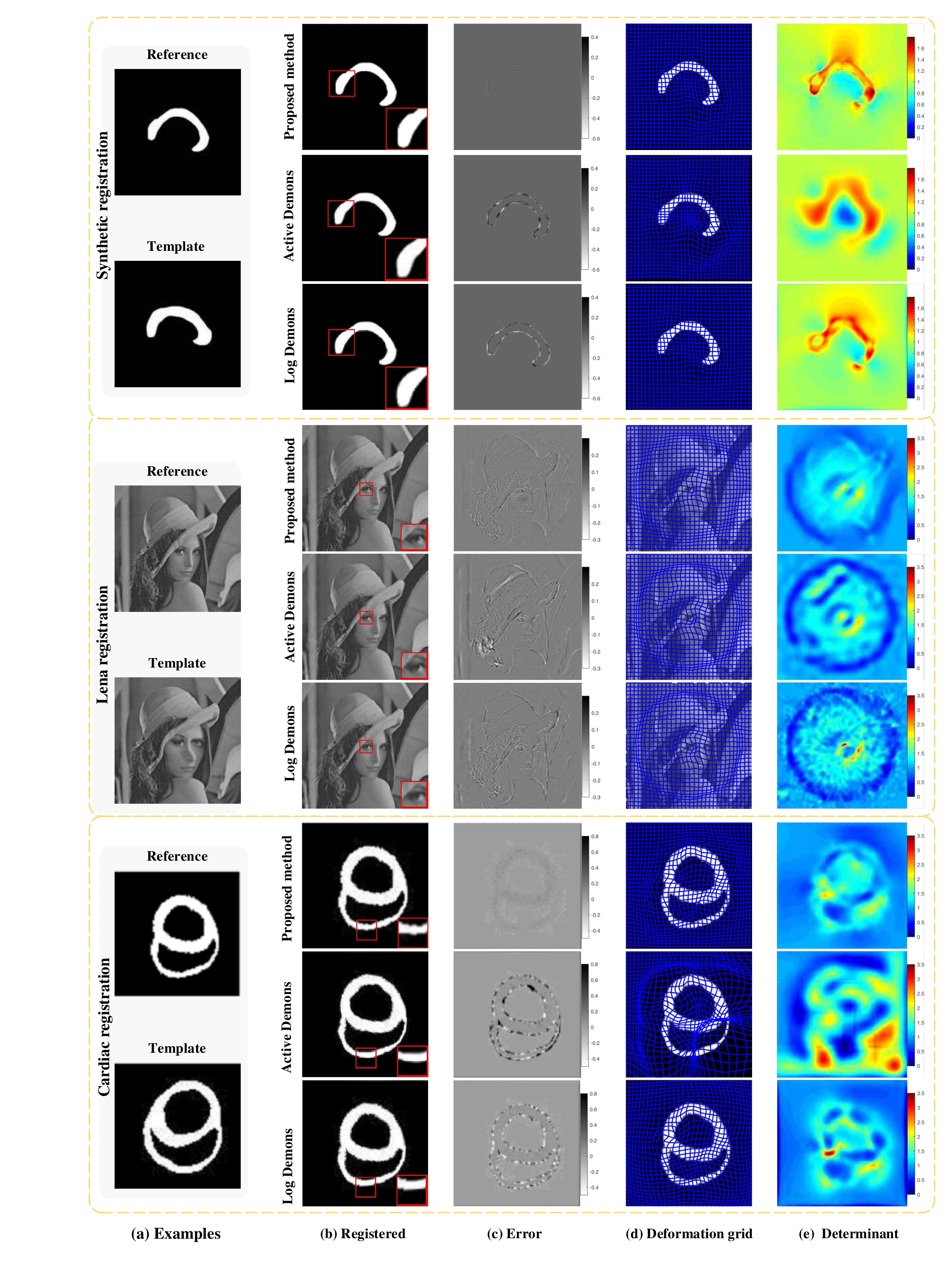}   
  \end{center}
    \caption{Comparison with active demons and diffeomorphic log-Demons. (a) the reference and template images (\emph{Synthetic}, \emph{Lena} and \emph{Cardiac} examples); (b) the registered results of the three classical models with the optimal parameters (\emph{Synthetic}: $\tau=3,\beta=0.01,\gamma=0.1$ and $N=40$ for OCRDIR model, Gaussian filter $\sigma^{2} = 10$, $\tau = 0.8$ and $ \text{MaxIter} = 200$ for active demons model, $\sigma_{\text {fluid}}=1$, $\sigma_{\text {diffusion}}=1$, $\sigma_{i}=2$, $\sigma_{x}=2$, $\text{nlevel}=5$ and $\text{niter}=40$ for Log-Demons model; \emph{Lena}: $\tau=3,\beta=0.1,\gamma=0.5$ and $N=40$ for OCRDIR model, Gaussian filter $\sigma^{2} = 6$, $\tau = 2.4$ and $ \text{MaxIter} = 200$ for active demons model, $\sigma_{\text {fluid}}=1$, $\sigma_{\text {diffusion}}=1$, $\sigma_{i}=2$, $\sigma_{x}=2$, $\text{nlevel}=5$ and $\text{niter}=40$ for Log-Demons model; \emph{Cardiac}: $\tau=2.5,\beta=0.05,\gamma=0.01$ and $N=60$ for the proposed model, Gaussian filter $\sigma^{2} = 10$, $\tau = 0.4 $ and $\text{MaxIter} = 300$ for active demons model, $\sigma_{\text {fluid}}=4$, $\sigma_{\text {diffusion}}=1$, $\sigma_{i}=2$, $\sigma_{x}=2$, $\text{nlevel}=5$ and $\text{niter}=60$ for Log-Demons model.); (c) registration errors $T(\bm{\varphi})-R(\bm{x})$; (d) the deformation grids; (e) the Jacobian determinant hotmaps of the displacement fields.}\label{fig:test2_demon-type_ex_heart}
\end{figure}

\begin{table}[t]\small
  \centering
  \begin{lrbox}{\mybox}       
    \begin{tabular}{ccccccccc}
      \toprule[1.5pt]
      \setlength{\tabcolsep}{2.8pt} 
      \renewcommand\arraystretch{1.1}
      \textbf{Example} & \textbf{Method}  & $\overline{\det}(J(\bm{\varphi}))$&$ R_{min} $& $\min \det(J(\bm{\varphi}))$ & $\max \det(J(\bm{\varphi}))$ & $\textbf{\emph{ssim}}$  & $\rm{Re_{-}SSD}$ &\textbf{\emph{psnr}}\\
      \hline
      \multirow{3}{*}{\textbf{Synthetic}}  & Proposed           & 1.000  &+ & 0.45   &  1.95   & \textbf{1.0000} &\textbf{0.001}\%  &\textbf{52.19}\\
      &Active Demons   & 1.008  & + &  0.31  &  1.82   & 0.9949 &1.50\% & 19.65 \\
      &Log Demons      & 1.015   & +  & 0.64   &  1.60   & 0.9932 &1.70\%  &19.55\\
      \hline
      \multirow{3}{*}{\textbf{Lena}}  & Proposed  &  1.001 & +  & 0.48   &  2.58   & 0.9505 & \textbf{3.78}\%  &\textbf{26.84}\\
      &Active Demons   &  1.079 & +  &  0.30  &  2.37   &0.9067& 9.50\%  &22.91\\
      &Log Demons       &  1.072 & +  & 0.29 &  3.52   & 0.9330 & 7.12\%  &24.36\\
      \hline
      \multirow{3}{*}{\textbf{Cardiac}} & Proposed   & 1.001 & + & 0.33   & 2.96   &  0.8369  &\textbf{1.06}\%  &\textbf{21.01}\\
      & Active Demons  &  1.539  & - & \textbf{-0.02  }  &  3.09   & \textbf{0.8459}  &6.21\%  &13.48\\
      &Log Demons   &  1.107   & + & 0.09    &  3.52  &  0.8205  &3.38\%  &16.18 \\
      \toprule[1.5pt]
    \end{tabular}
  \end{lrbox}
  \caption{Comparison of the new model with Demons} 
  \scalebox{0.86}{\usebox{\mybox}}\label{table-small}  
\end{table}

\subsection{Comparison of the new model with Demons-type models} Active demons algorithm is an automatic and efficient registration algorithm, which uses all the image information to avoid the error caused by artificial intervention. It also has certain advantages in image registration when the \emph{reference} image gradient is very small and the deformation $\bm{\varphi}$ is relatively large. It is an important algorithm in image registration. Log-Demons algorithm inherits the ability of the demons algorithm in dealing with large deformation registration, it also can keep diffeomorphism of the deformation $\bm{\varphi}$. So we compare with the demons-type models to conclude that the new model has better performance for small or large deformation. In this example, we test three pairs of images, including \emph{Synthetic} images, \emph{Lena} images and \emph{Cardiac} images of resolution $128\times 128$. We compare the proposed OCRDIR model with the active demons and diffeomorphic log-demons to demonstrate the performance of our model.
 
The proposed ALMM-based OCRDIR algorithm is to solve a \emph{time-dependent} optimization problem, where the
maximum number of outer iterations of time-step $t_j$ is set to $N=40$ (i.e., $ t_j\in[0,1], j=1,\cdots,N$),  the
maximum number of inner iterations of our algorithm is set to $\rm{MaxIter}=5$ and
the tolerance for the
relative residual is set to $10^{-6}$. To make a fair comparison in implementation, we set the iteration of active demons algorithm to N$\times$MaxIter as well. The diffeomorphic log-demons uses a multi-resolution strategy for better registration, we let the level of multi-resolution be $\text{nlevel}=5$, $\text{niter}=\text{MaxIter}$.

In \emph{Synthetic} and \emph{Lena} examples, Fig.\ref{fig:test2_demon-type_ex_heart} shows that all three methods can produce satisfactory visualizations. The proposed method can present more image details which can be seen from the zoom-in images (\emph{see} Fig.\ref{fig:test2_demon-type_ex_heart}(b)), and obtain better registration performances, which not only is qualitatively shown by visual inspection of the image mismatch errors (\emph{see} Fig.\ref{fig:test2_demon-type_ex_heart}(c)), but also is quantitatively analyzed. As has been seen, the $\rm{Re_{-}SSD}$, $\emph{ssim}$ and \emph{psnr} of the new model are better than active demons and diffeomorphic log-demons, where the $\emph{ssim}$ and \emph{psnr} reach $1.000$ and $52.19$ respectively, and the $\rm{Re_{-}SSD}$ is as low as $0.001\% $. For two examples with small deformation in Fig.\ref{fig:test2_demon-type_ex_heart}(d), all models can generate diffeomorphic deformations and obtain smooth grid. It also can be seen from Tab.\ref{table-small} that all models produce the grid unfolding indicator $R_{\min}>0$ in \emph{Synthetic} and \emph{Lena} examples, i.e. the deformations obtained by these three models are diffeomorphic.
 We can notice that the transformed grids generated by the three models are relatively smooth for the small deformation, but the transformed grids obtained by active demons are winding for the large deformation obviously. Fig.\ref{fig:test2_demon-type_ex_heart}(e) shows the hotmaps of the Jacobian determinant, where the right color bars illustrate the range of the Jacobian determinant values at each pixel $\bm{x}$ (\emph{see} Tab.\ref{table-small} for more details).

However, for large deformation example in \emph{Cardiac} in Fig.\ref{fig:test2_demon-type_ex_heart}, it is known from Tab.\ref{table-small} that the deformation obtained by the active demons is non-diffeomorphic due to the grid unfolding indicator $R_{\min}<0$, i.e. mesh folding. From Fig.\ref{fig:test2_demon-type_ex_heart}(d), we can also see that the diffeomorphic log-demons generates diffeomorphic transformation, but the $\emph{ssim}$, $\rm{Re_{-}SSD}$ and $\emph{psnr}$ are worse in Tab.\ref{table-small}.

Therefore, those examples demonstrate that our OCRDIR model can get an accurate and diffeomorphic transformation for both small and large deformation registrations.
 
\begin{figure}
    \begin{center}
    \includegraphics[width=0.99\textwidth]{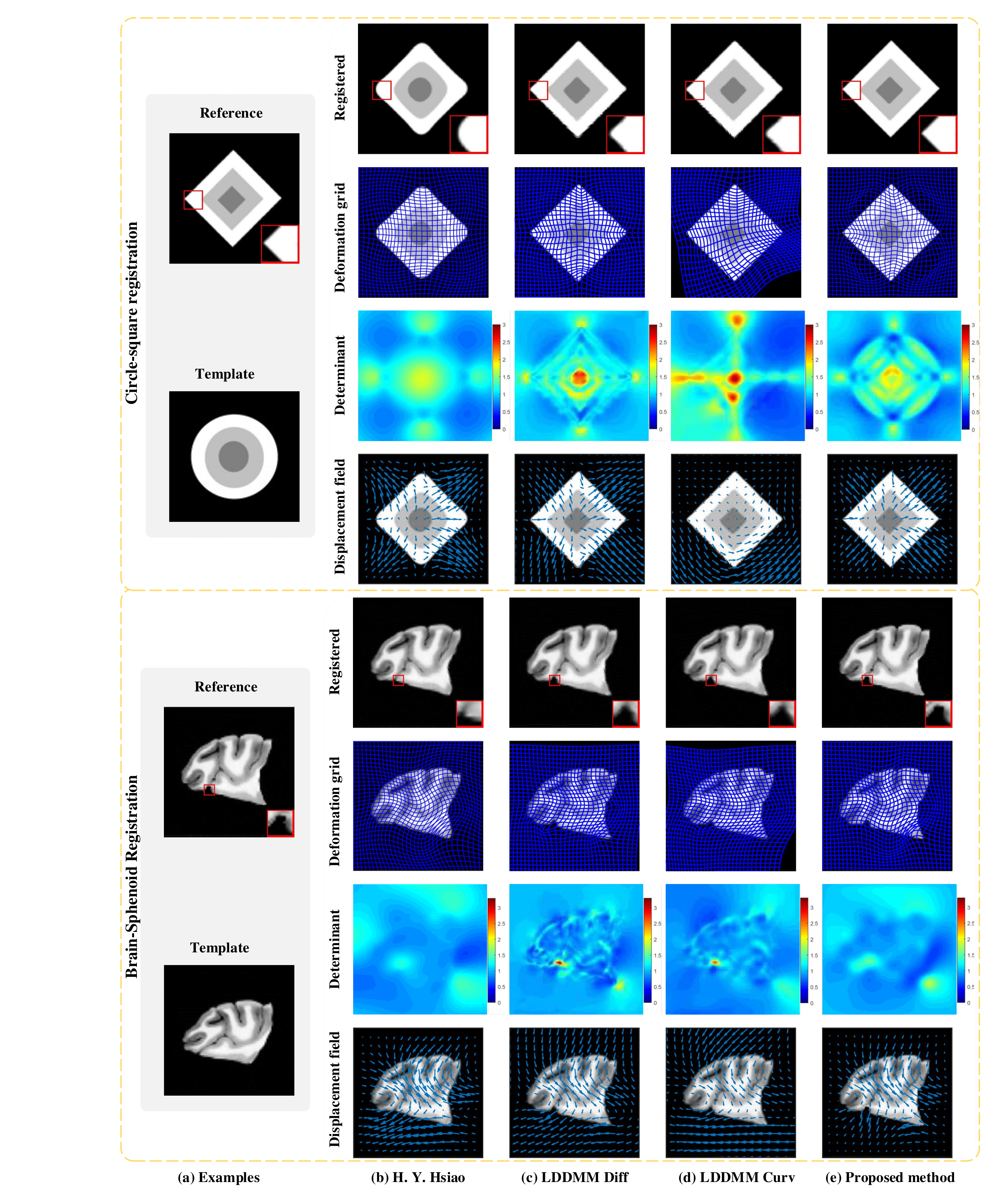}   
  \end{center}
    \caption{Comparison with three diffeomorphic variational models for \emph{Circle-square} and \emph{Brain-sphenoid} examples. (a) shows reference and template images; (b)-(e) show the results of H. Y. Hsiao'model, LDDMM diffusion model, LDDMM curvature model and the proposed OCRDIR model; from top to bottom: the registered images, the deformation grids, the Jacobian determinant hotmaps of the deformation grids and the displacement fields, respectively. The parameters are chose as follows: $\alpha= 80$,$\alpha= 0.1$ for LDDMM-Diff and LDDMM-Curv respectively, $\tau=4,\beta=0.01,\gamma=0.01$ and $N=40$
 for the proposed model in \emph{Circle-square} test; $\alpha= 100$,$\alpha= 0.1$ for LDDMM-Diff and LDDMM-Curv respectively, $\tau=4,\beta=0.05,\gamma=0.01$ and $N=40$ for the proposed model in \emph{Brain-sphenoid} example.}\label{fig:test3_diffeo-type_ex_brain}
\end{figure}

\begin{figure}
    \begin{center}
    \includegraphics[width=0.99\textwidth]{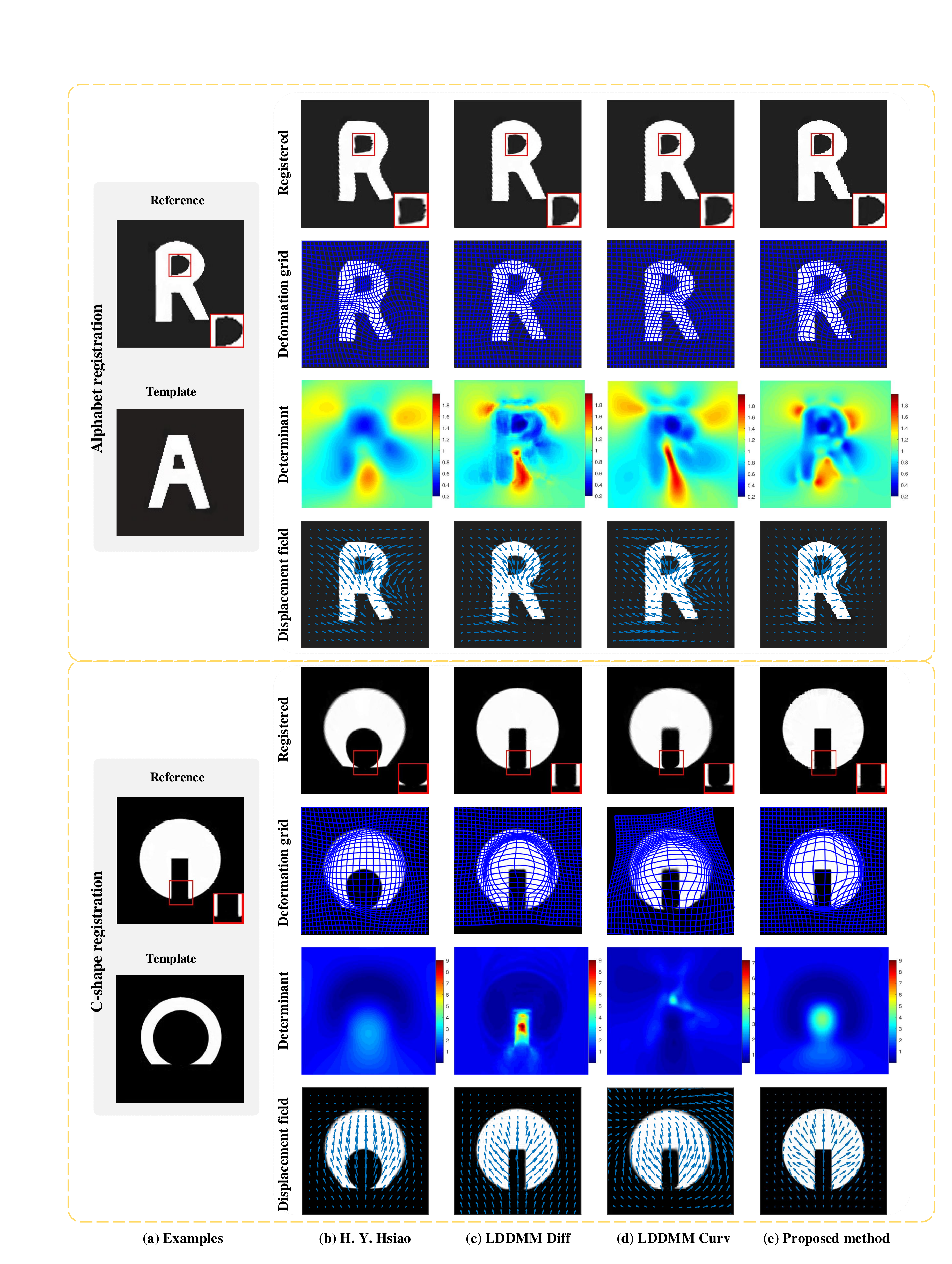}    
  \end{center}
  \caption{Comparison with three diffeomorphic variational models for large deformation image pairs: \emph{Alphabet} and \emph{C-shape}  examples. (a) shows reference and template images; (b)-(e) show the diffeomorphic registration results of H. Y. Hsiao's model, LDDMM diffusion model, LDDMM curvature model and the proposed OCRDIR model; from top to bottom: the registered images, the deformation grids, the Jacobian determinant hotmaps of the deformation grids and the displacement fields. The parameters are chose as follows: $\alpha= 300$, $\alpha= 1$ for LDDMM-Diff and LDDMM-Curv respectively, $\tau=4$, $\beta=0.01$, $\gamma=1$ and $ N= 40$ for the proposed model in \emph{Alphabet} example; $\alpha= 200$, $\alpha= 10$ for LDDMM-Diff and LDDMM-Curv respectively, $\tau=3,\beta=0.01,\gamma=0.05$ and $N=60$ for the proposed model in \emph{C-shape}  test.}\label{fig:test3_diffeo-type_ex_AR}    
\end{figure}

\subsection{Comparison of the new model with other diffeomorphic models}
In this experiment, we compare
our OCRDIR model with three popular diffeomorphic image
registration models. 
The first compared approach is naturally the Hsiao's model proposed by H. Y. Hsiao \cite{hsiao2014new}, which uses div-curl system as constraints to generate smooth and invertible deformation. The second and third compared works are the efficient LDDMM frameworks (with diffusion regularizer and curvature regularizer, respectively) \cite{MangA},
they can efficiently solve large deformation diffeomorphic registration problems.
Their approaches are different from ours since they
are focused on grid generation and a multi-resolution technique. For the LDDMM framework we use a five-level strategy in this experiment, the values for the regularization parameters are optimally chosen, and other parameters refer to default values provided in LDDMM framework to solve the problem.

 We start by giving illustrative registered reconstructions in Fig.\ref{fig:test3_diffeo-type_ex_brain}, which clearly shows the advantage of using dynamical system over the LDDMM frameworks (with diffusion regularizer and curvature regularizer, respectively) and the Hsiao's model. 
As is shown in the first row of Fig.\ref{fig:test3_diffeo-type_ex_brain}, our method perfectly registers the sharp edges of the large deformation images, the registered images are almost the ground truth \emph{reference} images. On the other hand, the Hsiao's model just registers the smooth faces, while the LDDMM can register almost the ground truth references except some place "under-enhancement".
The second and third rows of Fig.\ref{fig:test3_diffeo-type_ex_brain} shows the transformations and hotmaps of Jacobian determinant $\det(J(\bm{\varphi}))$ by four models.
In the fourth row of Fig.\ref{fig:test3_diffeo-type_ex_brain},
we see that our method produces the best displacement fields or optical flows, which shows a major advantage (or
better performance) of our proposed model when the displacement between the \emph{template} and \emph{reference} images is of large deformation,
and even when the contrast between meaningful objects and the background is low.
But in the LDDMM framework with curvature regularization, the deformation grids are obviously distorted. Besides, the Jacobian hotmap has a large range of changes.

To present more quantitative differences, we compare all four models by computing the grid unfolding indicator $R_{\min}$, the average value $\overline{\det}(J(\bm{\varphi}))$, minimizer $\det_{\min}(J(\bm{\varphi}))$ and maximizer $\det_{\max}(J(\bm{\varphi}))$ of Jacobian determinant $\det(J(\bm{\varphi}))$, also compare the $\emph{ssim}$, $\text{Re\_SSD}$ and $\emph{psnr}$ values between the registered image and \emph{reference} image, respectively. We show the results in Tab.\ref{table-large}. Here all models can generate diffeomorphic solutions as judged by the values of the grid unfolding indicator $R_{\min}$.
While the range of the relative area change is considerably smallest of the Hsiao's model ($\det(J(\bm{\varphi}))\in[0.61,1.43]$ and $\det(J(\bm{\varphi}))\in[0.65,1.74]$) among the four methods, which leads to over-preservation of area, hence it give worse $\rm{Re_{-}SSD}$, $\emph{ssim}$ and $\emph{psnr}$. The LDDMM model with the diffusion regulariser leads to further improvements over
the Hsiao's model but exhibits
the bigger mesh change near large deformation region (e.g.,  $\det(J(\bm{\varphi}))\in[0.51,3.22]$ for \emph{Brain-Sphenoid} example). The proposed method leads to significantly
better results, the range of the relative area change is considerably smaller than whose of the LDDMM, e.g.,  $ {\det}(J(\bm{\varphi}))\in [0.24, 2.77]$, $ {\det}(J(\bm{\varphi}))\in [0.34, 2.15]$ for the \emph{Brain-sphenoid}, \emph{Circle-square}, respectively. In addition, in the \emph{Brain-sphenoid} and \emph{Circle-square} examples, our model generates better $\rm{Re_{-}SSD}$, \emph{ssim} and \emph{psnr} than the LDDMM. Especially in the \emph{Circle-square} experiment, our model obtains the best  $\rm{Re_{-}SSD}$ of $\text{0.008\%}$, $\emph{ssim}$ of \text{0.9994} and $\emph{psnr}$ of \text{45.58}. This means that the deformed image is exactly the same as the \emph{reference} image.

\begin{table} \footnotesize 
  \centering
  \setlength{\tabcolsep}{2pt} 
  \renewcommand\arraystretch{1.1}
  \begin{lrbox}{\mybox}   
    \begin{tabular}{ccccccccccc}
      \toprule[1.5pt]
      \textbf{Example} & \textbf{Method} & $\overline{\det}(J(\bm{\varphi}))$  & $ R_{min} $ & $\det_{\min}(J(\bm{\varphi}))$ & $\det_{\max}(J(\bm{\varphi}))$ & $\textbf{\emph{ssim}}$  & $\rm{Re_{-}SSD}$ &\textbf{\emph{psnr}}&\textbf{\emph{Runtime(s)}}& \textbf{\emph{Memory(Mb)}}\\
      \hline
      \multirow{4}{*}{\makecell{\textbf{Brain-}\\ \textbf{sphenoid}}}  & Proposed            &  \textbf{1.000 } & + &0.24  &  2.77   & \textbf{0.9622 }  & \textbf{0.44}\%  & \textbf{28.31}&10.15 &16.06 \\
      & Hsiao       &  1.030  & +&  0.61  &  1.43   &  0.9195 & 5.10\%  & 17.72&30.44&9.25\\
      & LDDMM-Diff &  0.971  & +&  0.51  &   3.22  &  0.9444  & 0.94\%  &24.99&132.03&20.29\\
      & LDDMM-Cur &  0.920  &  +& 0.56  &  2.66   &  0.9459 & 1.02\% & 24.67&205.55&40.68\\
      \hline
      \multirow{4}{*}{\makecell{\textbf{Circle-}\\\textbf{square}}}  & Proposed            &  1.001  & +&  0.34  &  2.15   & \textbf{0.9994}  & \textbf{0.008}\%  &\textbf{45.58}&15.21&16.23\\
      & Hsiao       &  1.061  & +&  0.65  &  1.74   &  0.9372 & 3.60\%  &19.02&30.30&11.82\\
      & LDDMM-Diff &  1.079  & +&  0.56  &  2.54   &  0.9892  & 0.76\%  &25.83&196.49&20.44\\
      & LDDMM-Curv &  1.085  & +&  0.49  &  2.85   &  0.9872  & 0.87\%  &25.26&226.01&42.71\\
      \hline
      \multirow{4}{*}{\textbf{Cardiac}}  & Proposed            & 1.001 & +& 0.33   &  2.96   &  \textbf{0.8369} &\textbf{1.04}\%  &\textbf{21.01}&15.97 &12.44\\
      & Hsiao       &  1.073 & + & 0.60  &  2.45   &  0.7854  & 5.28\%  &14.20 &45.58 &10.83\\
      & LDDMM-Diff &  1.084  & + & 0.26  &  3.85   &  0.8347  & 1.69\%  &19.13 &221.80 &21.23\\
      & LDDMM-Curv &  1.115  & + & 0.52  &  2.92   &  0.8094  & 3.26\%  &16.29 &414.74 &39.61\\
      \hline
      \multirow{4}{*}{\textbf{A-R}}  & Proposed            &  \textbf{ 0.999} &+ & 0.20  & 1.87 & \textbf{0.9949 }  & \textbf{0.05}\%  & \textbf{36.82}& \textbf{11.16}&11.29\\
      & Hsiao      &   1.047  & + & 0.40  &  1.53    &  0.9392  & 8.40\%  &14.93  & 59.49 &10.23\\
      & LDDMM-Diff &  1.010   & + & 0.20  &  1.87    &  0.9820    & 1.58\%  & 22.11 & 375.95 &24.50\\
      & LDDMM-Curv &  1.013   & + & 0.26  &  2.00    &  0.9778  & 2.10\%  &20.89  & 406.23 &45.04\\
      \toprule[1.5pt]
    \end{tabular}
  \end{lrbox}
  \caption{Comparison of the new OCRDIR model with Hsiao's model and LDDMM frameworks (with the diffusion regularizer and the curvature regularizer).}
  \scalebox{0.86}{\usebox{\mybox}}\label{table-large} 
\end{table}

This is due to the fact that our model explicitly controls and penalizes area change by the control increment constraint with time evolution and its regularization (\ref{eq:eq13}).
The reason is that the new OCRDIR model tries to approximate Jacobian determinant $\det(J(\bm{\phi}(\bm{x},t)))\approx 1$ of $\bm{\phi}(\bm{x},t)$ based on high order smooth regulariser, which is clearly better in this case; in
other words, our approach is more effective in eliminating the error $T(\bm{\phi}(\bm{x},t))-R(\bm{x})$ for smooth deformation and is
competitive with high order methods. Hence, those examples demonstrate that our model has advantages over other models for large deformation registrations and can all help to get an accurate deformed image and diffeomorphic transformation.

At the end of this part, we perform the registration by \emph{Alphabet} pairs and \emph{C-shape} pairs with larger deformation by the popular diffeomorphic models and our OCRDIR model. The registered images, the deformation
grids, the Jacobian determinant hotmaps of the deformation grids and the displacement fields are shown in Fig.\ref{fig:test3_diffeo-type_ex_AR}. The both experiments show that our algorithm can deal with the details better.

\subsection{3D registration experiment}
In \cite{burger_HyperelasticRegularization_2013}, the cofactor
matrix is employed to quantify surface changes, where the solution is defined by
\begin{equation}
\begin{split}
\mathcal{A}_{0}:=\{y& \in W^{1,2}\left(\Omega, \mathbb{R}^{3}\right): \\
&\left.\operatorname{cof} \nabla y \in L_{4}\left(\Omega, \mathbb{R}^{3 \times 3}\right), \operatorname{det} \nabla y \in L_{2}(\Omega, \mathbb{R}), \operatorname{det} \nabla y>0 \text { a.e. }\right\} .
\end{split}
\end{equation}
The existence theory of diffeomorphic solutions of  registration problem using standard arguments from the theory of the hyperelastic regularization is build in their work \cite{burger_HyperelasticRegularization_2013}. The solution properties of our proposed model has been discussed in the previous Section, here we focus on a most challenging large deformation registration.

In this part, we will conduct two registration experiments with $128\times 128\times 128$ brain images and $64\times 64\times 64$ synthetic images to verify the effectiveness of the proposed approach for 3D image registration. The relevant parameters in the experiment are fixed as follows: $N=20$, $\text{MaxIter}=5$, $\tau=5$, $\beta=0.01$, $\gamma=1$. The experimental results are shown in Fig.\ref{fig:test3_diffeo-type_ex_brain3D}, and the performance indicators are list in Tab.\ref{table-3D}. 
\begin{figure}
    \begin{center}
    \includegraphics[width=0.99\textwidth]{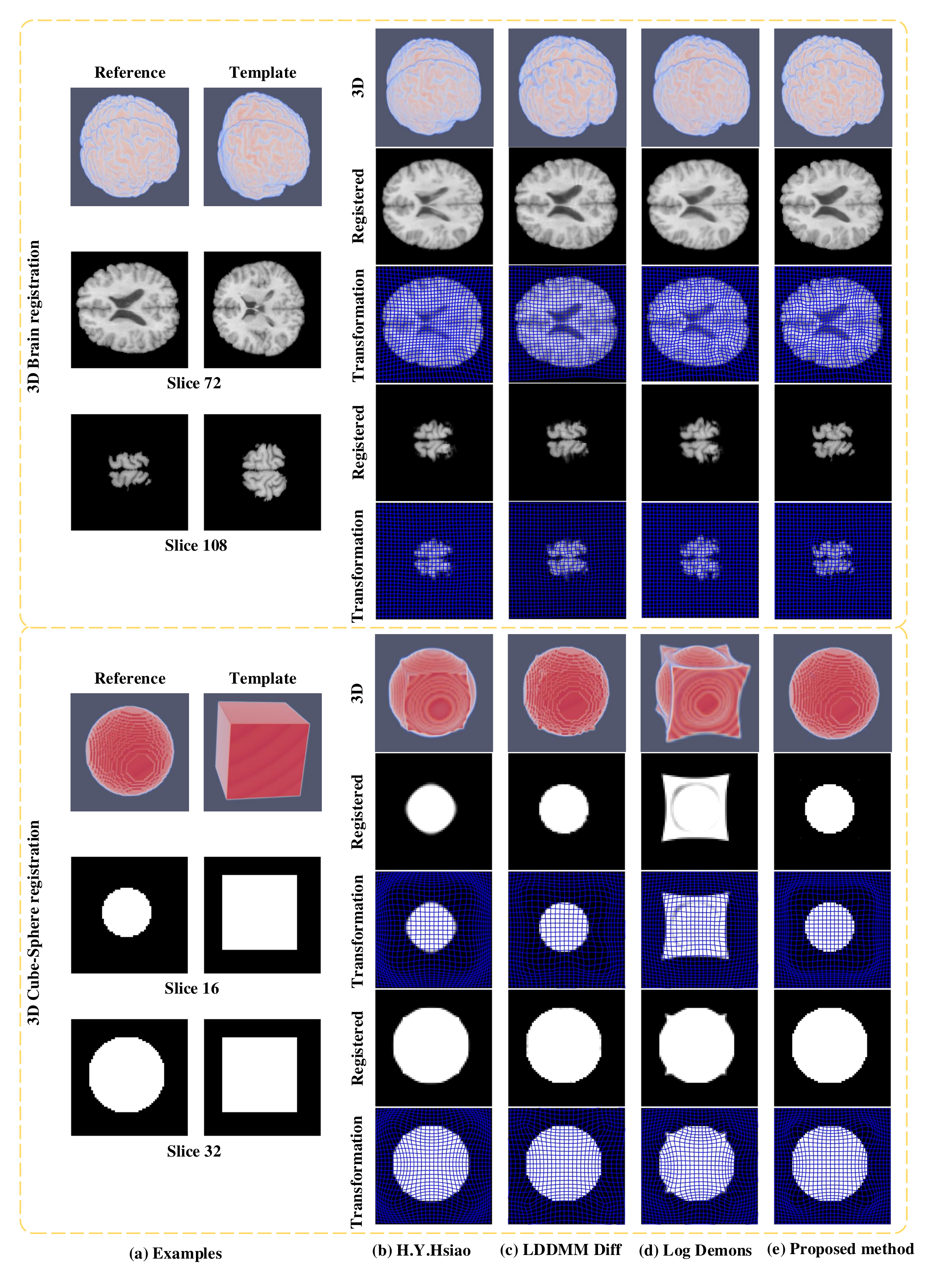}     
  \end{center}
  \caption{Comparison of 3D registration. (a) the reference images and template images of 3D Brain and Cube-Sphere examples, as well as different slices; (b)-(e) the experimental results of H. Y. Hsiao, LDDMM Diff, Log Demons and the proposed method,  respectively.}\label{fig:test3_diffeo-type_ex_brain3D}    
\end{figure}

\begin{table}
  \centering
  \begin{lrbox}{\mybox} 
    \begin{tabular}{cccccccc}
      \toprule[1.5pt]
      Example &$\overline{\det}(J(\bm{\varphi}))$&$ R_{\min} $& $\det_{\min}(J(\bm{\varphi}))$ & $\det_{\max}(J(\bm{\varphi}))$ & $\textbf{\emph{ssim}}$  & $\rm{Re_{-}SSD}$ &\textbf{\emph{psnr}}\\
      \hline
      3D-Brain &0.9992  &+ &  0.06  &  4.23   & 0.9215 & 0.34\%  & 18.71\\
      \toprule[1.5pt]
    \end{tabular}
  \end{lrbox}
  \caption{Comparison for the 3D experiments.}
  \scalebox{0.86}{\usebox{\mybox}}\label{table-3D} 
\end{table}

3D results shows the significance and the effect of using the proposed approach. Carefully comparing Figs.\ref{fig:test3_diffeo-type_ex_brain3D}(b)–(e), we notice that our new model deals with the brain and synthetic images better than the other three models, especially in reducing the dissimilarity.

\section{Conclusion}\label{sec:conclusions}
Maintaining the continuity and reversibility of the deformation field is a key issue in image registration modeling, and the deformation with diffeomorphism just meets this requirement. In this paper, the proposed OCRDIR model is based on the Jacobian equation with diffeomorphic solution as a constraint condition. Combined with the theoretical proof, it is known that the new OCRDIR model can always ensure the diffeomorphism of the deformation field during the registration. We analyzed an efficient ALMM  scheme to efficiently solve the relaxation optimization problem arising in our OCRDIR model. We gave a grid unfolding indicator for cell central-difference discretization, then a robust ALMM-based OCRDIR algorithm for using the deformation correction and backtrack
strategy is proposed to guarantee that the solution is diffeomorphic. The numerical results demonstrate that our ALMM-based OCRDIR algorithm can efficiently register both small and large deformation image registration. Our future work will focus on the development of faster optimization algorithm and the registration model in multi-modality diffeomorphic image registration.

\bibliographystyle{siam}
\bibliography{diffeo_reg_ref}

\begin{thebibliography}{10}

\bibitem{EAllgower1993Continuation}
{\sc E.~L. Allgower and K.~Georg}, {\em Continuation and path following}, Acta
  Numerica, 2 (1993), pp.~1--64.

\bibitem{bajcsy1989multiresolution}
{\sc R.~Bajcsy and S.~Kovacic}, {\em Multiresolution elastic matching},
  Graphical Models graphical Models and Image Processing computer Vision,
  Graphics, and Image Processing, 46 (1989), pp.~1--21.

\bibitem{Bauschke2011}
{\sc H.~H. Bauschke and P.~L. Combettes}, {\em Fej{\'e}r Monotonicity and Fixed
  Point Iterations}, Springer New York, New York, NY, 2011, pp.~75--86.

\bibitem{beg2005computing}
{\sc M.~F. Beg, M.~Miller, A.~Trouve, and L.~Younes}, {\em Computing large
  deformation metric mappings via geodesic flows of diffeomorphisms},
  International Journal of Computer Vision, 61 (2005), pp.~139--157.

\bibitem{Behan20062}
{\sc A.~Behan}, {\em 2d and 3d image registration for medical, remote sensing,
  and industrial applications.}, Photogrammetric Record, 21 (2006),
  pp.~180--181.

\bibitem{broit1981optimal}
{\sc C.~Broit}, {\em Optimal Registration of Deformed Images}, PhD thesis, USA,
  1981.

\bibitem{burger_HyperelasticRegularization_2013}
{\sc M.~Burger, J.~Modersitzki, and L.~Ruthotto}, {\em A {{Hyperelastic
  Regularization Energy}} for {{Image Registration}}}, 35, pp.~B132--B148.

\bibitem{cachier1999fast}
{\sc P.~Cachier, X.~Pennec, and N.~Ayache}, {\em Fast non rigid matching by
  gradient descent: Study and improvements of the demons algorithm},  (1999).

\bibitem{carl2018bolzano}
{\sc M.~Carl, L.~Galeotti, and B.~L{\"o}we}, {\em The bolzano-weierstrass
  theorem in generalised analysis}, Houston Journal of Mathematics, 44 (2018),
  pp.~1081--1109.

\bibitem{CChen2019}
{\sc C.~Chen, B.~Gris, and O.~Oktem}, {\em A new variational model for joint
  image reconstruction and motion estimation in spatiotemporal imaging}, SIAM
  Journal on Imaging Sciences, 12 (2019), pp.~1686--1719.

\bibitem{CChen2018}
{\sc C.~Chen and O.~Oktem}, {\em Indirect image registration with large
  diffeomorphic deformations}, SIAM Journal on Imaging Sciences, 11 (2018),
  pp.~575--617.

\bibitem{XChen2016Numerical}
{\sc X.~Chen}, {\em Numerical Construction of Diffeomorphism and the
  Applications to Grid Generation and Image Registration}, PhD thesis, USA,
  2016.

\bibitem{christensen1997volumetric}
{\sc G.~E. Christensen, S.~Joshi, and M.~Miller}, {\em Volumetric
  transformation of brain anatomy}, IEEE Transactions on Medical Imaging, 16
  (1997), pp.~864--877.

\bibitem{christensen1996deformable}
{\sc G.~E. Christensen, R.~D. Rabbitt, and M.~I. Miller}, {\em Deformable
  templates using large deformation kinematics}, Trans. Img. Proc., 5 (1996),
  pp.~1435--1447.

\bibitem{dacorogna1990partial}
{\sc B.~Dacorogna and J.~Moser}, {\em On a partial differential equation
  involving the jacobian determinant}, Annales De L Institut Henri
  Poincare-analyse Non Lineaire, 7 (1990), pp.~1--26.

\bibitem{fischer2002fast}
{\sc B.~Fischer and J.~Modersitzki}, {\em Fast diffusion registration},
  Contemporary Mathematics, 313 (2002), pp.~117--128.

\bibitem{fischer2003curvature}
\leavevmode\vrule height 2pt depth -1.6pt width 23pt, {\em Curvature based
  image registration}, Journal of Mathematical Imaging and Vision, 18 (2003),
  pp.~81--85.

\bibitem{Fischler}
{\sc M.~A. Fischler and R.~A. Elschlager}, {\em The representation and matching
  of pictorial structures}, IEEE Transactions on Computers, 22 (1973),
  pp.~67--92.

\bibitem{10}
{\sc C.~Frohn-Schauf, S.~Henn, L.~H{\"o}mke, and K.~Witsch}, {\em Total
  variation based image registration}, in Image Processing Based on Partial
  Differential Equations, Springer Berlin Heidelberg, 2007, pp.~343--361.

\bibitem{Frohn}
{\sc C.~Frohn-Schauf, S.~Henn, and K.~Witsch}, {\em Multigrid based total
  variation image registration}, Computing and Visualization in Science, 11
  (2008).

\bibitem{RGlowinski1989}
{\sc R.~Glowinski and P.~Le~Tallec}, {\em Augmented Lagrangian and
  Operator-Splitting Methods in Nonlinear Mechanics}, Society for Industrial
  and Applied Mathematics, 1989.

\bibitem{Gu2014}
{\sc G.~Y. Gu, B.~S. He, and X.~M. Yuan}, {\em Customized proximal point
  algorithms for linearly constrained convex minimization and saddle-point
  problems: a unified approach}, Comput. Optim. Appl., 59 (2014), pp.~135--161.

\bibitem{haber_ImageRegistration_2007}
{\sc E.~Haber and J.~Modersitzki}, {\em Image registration with guaranteed
  displacement regularity}, 71, pp.~361--372.

\bibitem{haber_NumericalMethods_2004}
\leavevmode\vrule height 2pt depth -1.6pt width 23pt, {\em Numerical methods
  for volume preserving image registration}, 20, pp.~1621--1638.

\bibitem{HaberNumerical}
{\sc E.~Haber and J.~Modersitzki}, {\em Numerical methods for volume preserving
  image registration}, Inverse Problems, 20 (2004), pp.~1621--1638.

\bibitem{hermosillo2002variational}
{\sc G.~Hermosillo, C.~Chefd'hotel, and O.~Faugeras}, {\em Variational methods
  for multimodal image matching}, International Journal of Computer Vision, 50
  (2002), pp.~329--343.

\bibitem{hill2001medical}
{\sc D.~Hill, P.~G. Batchelor, M.~Holden, and D.~J. Hawkes}, {\em Medical image
  registration}, Physics in Medicine and Biology, 46 (2001).

\bibitem{hsiao2014new}
{\sc H.~Y. Hsiao, C.~Y. Hsieh, X.~Chen, Y.~Y. Gong, X.~N. Luo, and G.~J. Liao},
  {\em New development of nonrigid registration}, The Anziam Journal, 55
  (2014), pp.~289--297.

\bibitem{Landmark}
{\sc S.~C. {Joshi} and M.~I. {Miller}}, {\em Landmark matching via large
  deformation diffeomorphisms}, IEEE Transactions on Image Processing, 9
  (2000), pp.~1357--1370.

\bibitem{karacal2004}
{\sc B.~Karacali and C.~Davatzikos}, {\em Estimating topology preserving and
  smooth displacement fields}, IEEE Transactions on Medical Imaging, 23 (2004),
  pp.~868--880.

\bibitem{KCLam2014}
{\sc K.~C. Lam and L.~M. Lui}, {\em Landmark- and intensity-based registration
  with large deformations via quasi-conformal maps}, SIAM Journal on Imaging
  Sciences, 7 (2014), pp.~2364--2392.

\bibitem{YTLee2016}
{\sc Y.~T. Lee, K.~C. Lam, and L.~M. Lui}, {\em Landmark-matching
  transformation with large deformation via n-dimensional quasi-conformal
  maps}, Journal of Scientific Computing, 67 (2016), pp.~926--954.

\bibitem{HFLi2020}
{\sc H.~Li, J.~Liu, L.~Cui, H.~Huang, and X.-C. Tai}, {\em Volume preserving
  image segmentation with entropy regularized optimal transport and its
  applications in deep learning}, 71, p.~102845.

\bibitem{liao2008optimal}
{\sc G.~J. Liao, X.~X. Cai, D.~Fleitas, X.~N. Luo, J.~M. Wang, and J.~X. Xue},
  {\em Optimal control approach to data set alignment}, Applied Mathematics
  Letters, 21 (2008), pp.~898--905.

\bibitem{generation1992}
{\sc G.~J. Liao and J.~Z. Su}, {\em Grid generation via deformation}, Applied
  Mathematics Letters, 5 (1992), pp.~27--29.

\bibitem{Liu66}
{\sc J.~Liu}, {\em New development of the deformation method}, PhD thesis,
  University of Texas, 2006.

\bibitem{Liu2011Total}
{\sc M.~Z. Liu}, {\em Total Bregman divergence, a robust divergence measure,
  and its applications}, University of Florida, 2011.

\bibitem{MangA}
{\sc A.~Mang and L.~Ruthotto}, {\em A lagrangian gauss-newton-krylov solver for
  mass- and intensity-preserving diffeomorphic image registration}, SIAM
  Journal on Scientific Computing, 39 (2017).

\bibitem{MansiiLogDemons}
{\sc T.~Mansi, X.~Pennec, M.~Sermesant, H.~Delingette, and N.~Ayache}, {\em
  ilogdemons: A demons-based registration algorithm for tracking incompressible
  elastic biological tissues}, International Journal of Computer Vision, 92
  (2011), pp.~92--111.

\bibitem{Modersitzki2004Numerical}
{\sc J.~Modersitzki}, {\em Numerical methods for image registration}, Oxford
  University Press, 2004.

\bibitem{Fair}
\leavevmode\vrule height 2pt depth -1.6pt width 23pt, {\em Fair: Flexible
  algorithms for image registration},  (2008).

\bibitem{MooreQuality}
{\sc C.~S. Moore, G.~P. Liney, and A.~W. Beavis}, {\em Quality assurance of
  registration of ct and mri data sets for treatment planning of radiotherapy
  for head and neck cancers}, Journal of Applied Clinical Medical Physics, 5,
  pp.~25--35.

\bibitem{roche}
{\sc A.~Roche, G.~Malandain, N.~Ayache, and X.~Pennec}, {\em {Multimodal Image
  Registration by Maximization of the Correlation Ratio}}, Tech. Rep. RR-3378,
  {INRIA}, Aug. 1998.

\bibitem{SALGOVIC1981}
{\sc A.~Salgovic, V.~Hlavacek, and J.~Ilavsky}, {\em Global simulation of
  counter current separation processes via one-parameter imbedding techniques},
  Chemical Engineering Science, 36 (1981), pp.~1599 -- 1604.

\bibitem{SotirasDeformable}
{\sc A.~Sotiras, C.~Davatzikos, and N.~Paragios}, {\em Deformable medical image
  registration: A survey}, IEEE Transactions on Medical Imaging, 32 (2013),
  pp.~1153--1190.

\bibitem{thirion1998image}
{\sc J.~Thirion}, {\em Image matching as a diffusion process: an analogy with
  maxwell's demons}, Medical Image Analysis, 2 (1998), pp.~243--260.

\bibitem{TrouvDiffeomorphisms}
{\sc A.~Trouve}, {\em Diffeomorphisms groups and pattern matching in image
  analysis}, International Journal of Computer Vision, 28 (1998), pp.~213--221.

\bibitem{Vercauteren2007Non}
{\sc T.~Vercauteren, X.~Pennec, A.~Perchant, and N.~Ayache}, {\em
  Non-parametric diffeomorphic image registration with the demons algorithm},
  in International Conference on Medical Image Computing and Computer-Assisted
  Intervention, Springer, 2007, pp.~319--326.

\bibitem{wang2005validation}
{\sc H.~Wang, L.~Dong, J.~Daniel, R.~Mohan, A.~S. Garden, K.~K. Ang, D.~A.
  Kuban, M.~Bonnen, J.~Y. Chang, and R.~Cheung}, {\em Validation of an
  accelerated demons algorithm for deformable image registration in radiation
  therapy}, Physics in Medicine \& Biology, 50 (2005), p.~2887.

\bibitem{wu2010augmented}
{\sc C.~Wu and X.-C. Tai}, {\em Augmented {{Lagrangian Method}}, {{Dual
  Methods}}, and {{Split Bregman Iteration}} for {{ROF}}, {{Vectorial TV}}, and
  {{High Order Models}}}, 3, pp.~300--339.

\bibitem{Yanovsky2007Log}
{\sc I.~Yanovsky, S.~Osher, P.~M. Thompson, and A.~D. Leow}, {\em Log-unbiased
  large-deformation image registration}, in Visapp: Second International
  Conference on Computer Vision Theory \& Applications, 2007.

\bibitem{EZeidler1985}
{\sc E.~Zeidler}, {\em Nonlinear functional analysis and its applications
  {III}: Variational methods and optimization}, Springer-Verlag, New York,
  1985.

\bibitem{Daoping2018}
{\sc D.~P. Zhang and K.~Chen}, {\em A novel diffeomorphic model for image
  registration and its algorithm}, Journal of Mathematical Imaging and Vision,
  60 (2018), pp.~1261--1283.

\bibitem{JZhang2016}
{\sc J.~Zhang, K.~Chen, and B.~Yu}, {\em An improved discontinuity-preserving
  image registration model and its fast algorithm}, Applied Mathematical
  Modelling, 40 (2016), pp.~10740 -- 10759.

\bibitem{JPZhang2015}
{\sc J.~P. Zhang and K.~Chen}, {\em Variational image registration by a total
  fractional-order variation model}, Journal of Computational Physics, 293
  (2015), pp.~442 -- 461.

\end{thebibliography}
\end{document}